\theoremstyle{plain}
\newtheorem{theorem}{Theorem}[section]
\newtheorem{lemma}[theorem]{Lemma}
\newtheorem{proposition}[theorem]{Proposition}
\newtheorem*{theorem*}{Theorem}
\newtheorem*{lemma*}{Lemma}
\newtheorem*{proposition*}{Proposition}
\newtheorem*{conjecture*}{Conjecture}
\newtheorem{fact*}{Fact}
\theoremstyle{definition}
\newtheorem{example}[theorem]{Example}
\newtheorem{remark}[theorem]{Remark}
\newtheorem{assumption}[theorem]{Assumption}
\newtheorem*{definition*}{Definition}
\newtheorem*{question*}{Question}
\newtheorem*{example*}{Example}
\newtheorem*{remark*}{Remark}
\numberwithin{equation}{section}
\definecolor{defcolor}{rgb}{0,0,1}
\definecolor{aaltoblue}{cmyk}{100, 39, 0, 0}
\newif\ifshowr
\newif\ifshowchangedi
\newif\ifshowchangedii
\newcommand{\changedi}[1]{{\ifshowchangedi\color{magenta}\fi{#1}}} 
\newcommand{\changedii}[1]{{\ifshowchangedii\color{blue}\fi{#1}}} 
\newcommand{\removedii}[1]{\ifshowchangedii{\color{blue}\sout{#1}}\fi} 
  \newenvironment{echangedi}
  {\par \color{magenta}}
  {\par}
  \newenvironment{echangedi}
  {\par}
  {\par}
  \newenvironment{echangedii}
  {\par \color{blue}}
  {\par}
  \newenvironment{echangedii}
  {\par}
  {\par}
\newcommand{\mailto}[1]{\href{mailto:#1}{\nolinkurl{#1}}}
\newcommand{\abs}[1]{\mathopen{}\mathclose\bgroup\left|#1\right|\egroup}
\newcommand{\E}{E}
\newcommand{\pr}{P}
\newcommand{\Var}{\operatorname{Var}}
\newcommand{\Cov}{\operatorname{Cov}}
\newcommand{\weq}{\ = \ }
\newcommand{\wle}{\ \le \ }
\newcommand{\wge}{\ \ge \ }
\newcommand{\eqd}{\stackrel{\rm d}{=}}
\newcommand{\prto}{\xrightarrow{\pr}}
\newcommand{\pto}{\prto}
\newcommand{\N}{\mathbb{N}}
\newcommand{\Z}{\mathbb{Z}}
\newcommand{\R}{\mathbb{R}}
\newcommand{\Ito}{It\^{o}\xspace}
\newcommand{\Holder}{H{\"o}lder\xspace}
\newcommand{\Rascanu}{R{\u{a}}{\c{s}}canu\xspace}
\newcommand{\RSint}{\text{\rm (RS)} \int}
\newcommand{\RSsum}{\text{\rm RS}}
\newcommand{\ZSint}{\text{\rm (ZS)} \int}
\newcommand{\Leb}{\operatorname{Leb}}
\newcommand{\ind}[1]{1_{\{#1\}}}
\newcommand{\Hmin}{H_{\rm min}}
\begin{document}

\title{Pathwise Stieltjes integrals of discontinuously evaluated stochastic processes\thanks{A substantial part of this research has been supported by the Emil Aaltonen Foundation.}}
\author{
 Zhe Chen\thanks{New York University, Tandon School of Engineering, Dept.\ Finance and Risk Engineering} \and
 Lasse Leskelä\thanks{
 Aalto University, School of Science,
 Dept.\ Mathematics and Systems Analysis
 }
 \and
 Lauri Viitasaari\thanks{Aalto University, School of Electrical Engineering,
 Dept.\ Communications and Networking
 }
}
\date{\today}
\maketitle

\begin{abstract}
In this article we study the existence of pathwise Stieltjes integrals of the form $\int f(X_t)\, dY_t$ for nonrandom, possibly discontinuous, evaluation functions $f$ and \Holder continuous random processes $X$ and $Y$. We discuss a notion of sufficient variability for the process $X$ which ensures that the paths of the composite process $t \mapsto f(X_t)$ are almost surely regular enough to be integrable. We show that the pathwise integral can be defined as a limit of Riemann--Stieltjes sums for a large class of discontinuous evaluation functions of locally finite variation, and provide new estimates on the accuracy of numerical approximations of such integrals, together with a change of variables formula for integrals of the form $\int f(X_t) \, dX_t$.
\end{abstract}

{
\small
\noindent {\bf Keywords}: composite stochastic process, generalized Stieltjes integral, fractional calculus, Riemann--Liouville integral, fractional Sobolev space, Gagliardo--Slobodeckij seminorm, fractional Sobolev--Slobodeckij space, bounded $p$-variation\\[1ex]
\noindent{\bf MSC 2010:} 60H05, 60G22, 26A33, 60G07, 60G15
}

\section{Introduction}

In this article we study the existence of pathwise Stieltjes integrals of the form
\begin{equation}
 \label{eq:int}
 \int_0^T f(X_t) \, dY_t,
\end{equation}
where $X$ and $Y$ are real-valued stochastic processes with time parameter $t \in [0,T]$, and the values of $X$ are evaluated using a nonrandom function $f: \R \to \R$. If $X$ is adapted and $Y$ is a semimartingale with respect to some common filtration, then the integral can be handled using classical \Ito calculus techniques. On the other hand, if the function $f$ and the paths of $X$ and $Y$ are \Holder continuous of sufficiently \changedi{high} order, then the above integral exists almost surely as a standard Riemann--Stieltjes integral, as a consequence of the classical results of L.~C.~Young~\cite{Dudley_Norvaisa_1998, LCY}. However, in \changedi{several application contexts, especially fractional-order nonlinear systems \cite{Monje_Chen_Vinagre_Xue_Feliu_2010, Sheng_Chen_Qiu_2012},}  it is important to consider stochastic processes which are not semimartingales and evaluation functions which are not continuous. For example, in bang-bang stochastic control \cite{Kushner_Dupuis_1992} and stop-loss trading \cite{Follmer_Schied_2004, Sondermann} the function $f$ corresponds to the indicator function of a certain threshold. In this case the paths of $t \mapsto f(X_t)$ are typically discontinuous, and even worse, of infinite $p$-variation for all $p \ge 1$, which rules out rough path techniques introduced in \cite{Friz_Hairer_2014, lyons}.

Motivated by stochastic models where $X$ and $Y$ are non-semimartingales and $f$ is discontinuous, we investigate conditions which allow to define the integral~\eqref{eq:int} pathwise. The key idea in our approach is the assumption that \emph{sufficient variability} in the probability distribution  of $X$ makes it unlikely for $X$ to spend much time in the discontinuity points of $f$ and renders the composite process $t \mapsto f(X_t)$ regular enough to be integrable. Our main contribution is to quantify the notion of sufficient variability and show that the integral~\eqref{eq:int} can be defined as a limit of Riemann--Stieltjes sums for a large class of discontinuous evaluation functions of locally finite variation. We also provide new estimates on the accuracy of numerical approximations of such integrals computed using Riemann--Stieltjes sums over finite dense partitions, together with a change of variables formula for integrals of the form $\int f(X_t) \, dX_t$.

The main technical tool in our analysis is the pathwise fractional Stieltjes integral introduced by Zähle in \cite{Zahle_1998}, which is based on reinterpreting the Stieltjes integral as the Lebesgue integral of a product of fractional derivatives. The main technical part of the article is to show that under sufficient regularity conditions, the paths of $f(X_t)$ have almost surely finite Gagliardo seminorms and fit into fractional Sobolev spaces where fractional derivatives behave well \cite{DiNezza_Palatucci_Valdinoci_2012}. Consequently, the integral \eqref{eq:int} exists as a fractional Stieltjes integral. With this result at hand, we show how ordinary Riemann--Stieltjes sums approximating \eqref{eq:int} can also be interpreted as fractional Stieltjes integrals, and the convergence of Riemann--Stieltjes sums is then obtained by showing that suitably interpolated paths of $f(X_t)$ converge to the actual paths in the corresponding fractional Sobolev space.

Since the seminal paper of Zähle~\cite{Zahle_1998}, fractional Stieltjes integrals of stochastic processes have been actively studied, especially for fractional Brownian motions. Nualart and \Rascanu \cite{Nualart_Rascanu_2002} used fractional calculus techniques to study differential equations driven by \Holder continuous processes; all integrals of the form \eqref{eq:int} were restricted to Lipschitz continuous evaluation functions $f$. To the best of our knowledge, the first paper to discuss integrals of the form \eqref{eq:int} for discontinuous evaluation functions is \cite{AMV}, where similar types of questions as in this paper were studied for geometric fractional Brownian motions. Unfortunately, some proofs of the key results in \cite{AMV} contain flaws that appear difficult to fix without incorporating additional assumptions. Tikanmäki \cite{Tikanmaki_2002} extended the analysis to nonlinear integral functionals of fractional Brownian motions.
Another closely related paper is \cite{A-V}, where the rate of convergence of forward Riemann--Stieltjes sums over uniform partitions were studied for fractional Brownian motions. All of the aforementioned papers and most others focus on fractional Brownian motions of Hurst index $H>1/2$. However, see 
\cite{Besalu_Nualart_2011} for a special case with $1/3 < H < 1/2$.

In contrast with the aforementioned works, our current paper studies the integrability questions in a general framework with minimal assumptions, not restricted to fractional Brownian motions nor even Gaussian processes. Besides being more general, the key results, when restricted to fractional Brownian motions, turn out also to be stronger (see Example~\ref{exa:fBm}).



The rest of the paper is organised as follows. Section \ref{sec:assumptions} introduces the key assumptions, Section~\ref{sec:results} contains the main results, and Section~\ref{sec:proofs} contains the proofs. Some of the more technical proofs are deferred into separate appendices in the end of the paper.

\section{Key assumptions}
\label{sec:assumptions}
\subsection{Sufficient variability}

We make the following assumption \changedi{on} a random process $X:[0,T] \to \R$, which guarantees that the probability distribution of $X_t$ is not too much concentrated around any single value.

\begin{assumption}
\label{ass:Density}
For almost every \changedi{$t \in (0,T)$}, the random variable $X_t$ admits a probability density function $x \mapsto p_t(x)$ that is bounded according to
\begin{equation}
 \label{eq:Density}
 \sup_{x \in \R} p_t(x)
 \wle \hat p_t,
\end{equation}
where $\int_0^T \hat p_t \, dt < \infty$.
\end{assumption}
This assumption holds for a broad variety of random processes, for example:
\begin{itemize}
\item If $X$ is a stationary process such that $X_0$ has a bounded probability density function, then \eqref{eq:Density} holds with $\hat p_t$ being a constant.
\item If $X$ is a Gaussian random process with autocovariance function $R(s,t) = \Cov(X_s,X_t)$ such that $\int_0^T R(t,t)^{-1/2} \, dt < \infty$, then \eqref{eq:Density} holds with $\hat p_t = R(t,t)^{-1/2}$. Especially, Brownian motions and all fractional Brownian motions with Hurst parameter $H \in (0,1)$ satisfy Assumption~\ref{ass:Density}.
\item
\changedii{Let $X$ be a self-similar random process with index $H \in(0,1)$ and such that $X_1$ has a a bounded probability density function $p_1$.
Then $X_t$ has probability density function $p_t(x) = t^{-H} p_1(t^{-H}x)$ for $t > 0$. Hence
\[
 \int_0^T \sup_x p_t(x) \, dt
 \weq \Big( \sup_x p_1(x) \Big) \int_0^T t^{-H} \, dt
 \ < \ \infty
\]
shows that Assumption \ref{ass:Density} is satisfied.}
\end{itemize}
\changedi{
Observe also that \changedii{if} $X$ and $Y$ are independent random processes such that $X$ satisfies Assumption~\ref{ass:Density}, then $Z_t = X_t+Y_t$ admits a probability density function $z \mapsto \E p_t(z-Y_t)$ for almost every $t$, and hence also the process $Z=X+Y$ satisfies Assumption~\ref{ass:Density}.
}


\subsection{Continuity}
Most of the underlying random process studied in this paper will be assumed to have almost surely \Holder continuous paths. In addition, \changedii{we shall often assume that the process $X$ satisfies
\[
 \sup_{0 \le s < t \le T} \frac{\left( \E \abs{X_t - X_s}^{r} \right)^{1/r}}{|t-s|^{H}} < \infty
\]
for some $H \in (0,1]$ and $r > 1/H$,
}
which is the classical Kolmogorov--Chentsov criterion \cite[Theorem 3.23]{Kallenberg_2002} for the existence of a continuous version of $X$ with almost surely $\alpha$-\Holder continuous paths for any \changedii{$\alpha < H - 1/r$}.


\begin{remark}
\label{rem:GaussianHolder}
Any Gaussian process with \Holder continuous paths of order $\alpha>0$ satisfies \changedii{the above condition
for any $r > 1$ and any $H < \alpha$}, see \cite{asvy14}.
\end{remark}

\section{Results}
\label{sec:results}

\subsection{Pathwise integrability}
Throughout this section we denote by $\ZSint$ the fractional Zähle--Stieltjes integral which \changedi{is described in detail} in Section~\ref{subsec:fractional_integrals}.

\begin{theorem}
\label{the:ZSIntegral}
Let $X$ and $Y$ be \Holder continuous random processes of orders $\alpha$ and $\beta$, respectively, such that $\alpha + \beta > 1$. \changedi{Assume that $X$ satisfies Assumption~\ref{ass:Density}}. Then for any $f: \R \to \R$ of locally finite variation such that $f(X_{0+})$ exists almost surely, the pathwise integral
\[
 \ZSint_0^T f(X_t) \, d Y_t
\]
exists almost surely.
\end{theorem}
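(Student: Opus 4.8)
The plan is to recall that the Z\"ahle--Stieltjes integral $\ZSint_0^T g \, dY_t$ is well-defined whenever $g$ belongs to a fractional Sobolev--Slobodeckij space $W^{\gamma,1}$ (equivalently, has finite Gagliardo seminorm of suitable order) for some $\gamma$ with $1-\beta < \gamma < \alpha$, and $Y$ has H\"older exponent $\beta$; such a $\gamma$ exists because $\alpha + \beta > 1$. So it suffices to show that almost surely the path $t \mapsto f(X_t)$ has finite Gagliardo seminorm of order $\gamma$ for some $\gamma \in (1-\beta, \alpha)$, i.e.
\[
 \int_0^T \int_0^T \frac{|f(X_t) - f(X_s)|}{|t-s|^{1+\gamma}} \, ds \, dt < \infty \quad \text{a.s.}
\]
The idea is to bound this in expectation and conclude finiteness a.s.

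The main step is the estimate of $\E |f(X_t) - f(X_s)|$. Since $f$ has locally finite variation, we write $f = f_+ - f_-$ with $f_\pm$ nondecreasing, and for nondecreasing $f$ we use $|f(b) - f(a)| = \mu_f((a\wedge b, a\vee b])$ where $\mu_f$ is the Lebesgue--Stieltjes measure of $f$. By Fubini,
\[
 \E |f(X_t) - f(X_s)|
 \weq \E \int_{\R} \ind{X_s \wedge X_t < u \le X_s \vee X_t} \, \mu_f(du)
 \weq \int_{\R} \pr\big( X_s \wedge X_t < u \le X_s \vee X_t \big) \, \mu_f(du).
\]
For fixed $u$, the event that $u$ lies strictly between $X_s$ and $X_t$ forces $|X_t - X_s| > 0$, and more usefully, for any $h > 0$ it is contained in $\{|X_t - X_s| > h\} \cup \{X_s \in (u-h, u+h]\}$. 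The first event is controlled by Assumption~\ref{ass:MeanContinuity} and Markov's inequality, giving $\pr(|X_t - X_s| > h) \lesssim h^{-\delta}|t-s|^{\lambda}$; the second is controlled by Assumption~\ref{ass:Density}, giving $\pr(X_s \in (u-h,u+h]) \le 2h\,\hat p_s$. Optimizing over $h$ (balancing $h^{-\delta}|t-s|^\lambda$ against $h \hat p_s$) yields a bound of the form $\pr(X_s\wedge X_t < u \le X_s \vee X_t) \lesssim (\hat p_s)^{\delta/(\delta+1)} |t-s|^{\lambda/(\delta+1)}$, uniformly in $u$ over any bounded range. Because $X$ is a.s.\ bounded on $[0,T]$ (being continuous), one reduces $\mu_f$ to a bounded interval of $\R$ at the cost of a random but a.s.\ finite total mass; handling this localization carefully --- e.g.\ by decomposing on the event $\{\sup_{[0,T]} |X| \le N\}$ and summing over $N$, or by a preliminary stopping/truncation argument --- is the one genuinely delicate point and I expect it to be the main obstacle.

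With the pointwise bound in hand, set $\theta = \lambda/(\delta+1) > 0$ (note $\theta$ can be taken arbitrarily close to $\alpha$ by Remark~\ref{rem:GaussianHolder}-type reasoning in the Gaussian case, but in general we only know $\theta > 0$; however, the relevant integrability will still go through provided we pick $\gamma < \theta$, and one must check that the hypotheses do force $\theta$ large enough, or rather that one may \emph{choose} $\delta,\lambda$ so that $\lambda/(\delta+1)$ exceeds $1-\beta$ --- this is where the H\"older order $\alpha$ of $X$ and the assumption $\alpha+\beta>1$ re-enter, since $X$ being $\alpha$-H\"older lets us take $\lambda/(\delta+1)$ up to essentially $\alpha$). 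Then, on the truncation event,
\[
 \E \int_0^T\int_0^T \frac{|f(X_t)-f(X_s)|}{|t-s|^{1+\gamma}}\,ds\,dt
 \ \lesssim\ \int_0^T \int_0^T (\hat p_s)^{\delta/(\delta+1)} |t-s|^{\theta - 1 - \gamma}\, ds\, dt,
\]
which is finite whenever $\gamma < \theta$ (so the $|t-s|$ singularity is integrable) and $\int_0^T \hat p_s\, ds < \infty$ together with boundedness of $\hat p$ give control of the $\hat p_s$ factor --- here a short H\"older-inequality argument in $s$ reduces the $(\hat p_s)^{\delta/(\delta+1)}$ integrability to the $L^1$ assumption on $\hat p$. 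Choosing $\gamma \in (1-\beta, \theta \wedge \alpha)$ --- nonempty by the above --- and letting the truncation level $N \to \infty$, we conclude that $t \mapsto f(X_t)$ lies in $W^{\gamma,1}(0,T)$ almost surely, hence the Z\"ahle--Stieltjes integral exists. The assumption $f(X_{0+})$ exists a.s.\ is needed so that the fractional integral, defined via a right-limit at the left endpoint, is unambiguous; this is exactly the boundary regularity the Z\"ahle framework requires.
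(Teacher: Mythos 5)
There is a genuine gap: your key probabilistic estimate requires a hypothesis that Theorem~\ref{the:ZSIntegral} does not make. The bound $\pr(|X_t-X_s|>h)\lesssim h^{-\delta}|t-s|^{\lambda}$ comes from Markov's inequality applied to Assumption~\ref{ass:MeanContinuity}, but that moment condition is \emph{not} among the hypotheses of this theorem --- it is only imposed later, for the Riemann--Stieltjes convergence results (Theorem~\ref{the:RSIntegral} and its corollaries, where the extra constraint $\frac{\lambda}{1+\delta}+\beta>1$ is explicitly added). Your attempted repair --- that ``$X$ being $\alpha$-H\"older lets us take $\lambda/(\delta+1)$ up to essentially $\alpha$'' --- is false in general: almost sure H\"older continuity of the paths implies no moment bound whatsoever on the increments. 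For instance $X_t=Z\,t^{\alpha}$ with $Z$ Cauchy is a.s.\ $\alpha$-H\"older, satisfies Assumption~\ref{ass:Density} (with $\hat p_t\sim t^{-\alpha}$, integrable for $\alpha<1$), yet $\E|X_t-X_s|^{\delta}=\infty$ for every $\delta\ge 1$. So your argument, as written, proves a weaker theorem with strictly stronger hypotheses; it is essentially the paper's Proposition~\ref{the:WeakContinuity}/Lemma~\ref{the:Key} machinery, which the authors deploy only where Assumption~\ref{ass:MeanContinuity} is available.

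The paper avoids this by working pathwise rather than in distribution at the level of the increments: Lemma~\ref{the:HolderSingular} converts the time singularity $(t-s)^{-1-\theta}$ into a spatial singularity $|x_t-y|^{-\theta/\alpha}$ using only the (a.s.\ finite, but possibly non-integrable) H\"older seminorm $[X]_{\alpha,\infty}$, which is then carried along as a multiplicative random factor and never needs moments. Probability enters only through $\E|X_t-y|^{-\theta p/\alpha}\le 1+\frac{2}{1-\theta p/\alpha}\hat p_t$ (Lemma~\ref{the:Tikanmaki}), which uses Assumption~\ref{ass:Density} alone. Your reduction to Proposition~\ref{the:ZSIntegralBound}, the choice $\gamma\in(1-\beta,\alpha)$, and the localization on $\{\|X\|_\infty\le N\}$ all match the paper's structure; the increment estimate is where the argument must be replaced by the pathwise one to obtain the theorem under its stated hypotheses.
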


\begin{remark}
\changedi{The limit $f(X_{0+}) = \lim_{t \to 0} f(X_t)$ exists on the event that $X_0$ lies outside the discontinuity set of $f$. Especially,  the assumption that $f(X_{0+})$ exists almost surely is automatically valid when $X_t$ admits a probability density function at $t=0$, because the set of discontinuities of $f$ is countable.
}
\end{remark}

\begin{remark}
\label{rem:sufficient}
By examining the proof of Theorem~\ref{the:ZSIntegral} (actually, its key argument in the proof of Proposition~\ref{the:GagliardoComposite}) it is evident that a sufficient condition for the existence of the integral is that
\begin{equation}
 \label{eq:sufficient}
 \sup_{y \in \R} \E \int_0^T |X_t - y|^{-\theta/\alpha} \, dt < \infty
\end{equation}
for some $\theta \in (1-\beta,\alpha)$. This condition means that while $X$ may (and does in our cases of interest) cross some levels infinitely often, the crossings do not occur too often in the above sense. Furthermore, Assumption~\ref{ass:Density} guarantees that \eqref{eq:sufficient} holds for \changedi{the} process $X$. Consequently, the integral exists for all \Holder continuous nonrandom processes $x: [0,T] \to \R$ satisfying
\[
 \sup_{y \in \R} \int_0^T |x_t-y|^{-\theta/\alpha} \, dt < \infty.
\]
\end{remark}

\subsection{Change of variables}

\begin{theorem}
\label{the:Ito}
Assume that $X$ is \Holder continuous \changedi{of order $\alpha > 1/2$ and satisfies Assumption~\ref{ass:Density}.}
Let $f: \R \to \R$ be absolutely continuous, having a derivative $f'$ of locally finite variation such that $f'(X_{0+})$ exists almost surely. Then for any $t \in [0,T]$,
\begin{equation}
 \label{eq:Ito}
 f(X_t) - f(X_0)
 \weq \ZSint_0^t f'(X_s) \, dX_s
\end{equation}
almost surely.
\end{theorem}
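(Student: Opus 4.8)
The plan is to derive the change of variables formula \eqref{eq:Ito} from the already-established pathwise integrability result in Theorem~\ref{the:ZSIntegral} by approximating $f$ with smooth functions. First I would observe that since $\alpha > 1/2$, we may take $\beta = \alpha$ in Theorem~\ref{the:ZSIntegral}, so that $\int_0^t f'(X_s)\,dX_s$ exists pathwise in the Z\"ahle--Stieltjes sense for any $f'$ of locally finite variation with $f'(X_{0+})$ existing almost surely; thus the right-hand side of \eqref{eq:Ito} is well-defined. The strategy is then: (i) establish \eqref{eq:Ito} first for nice $f$ (say $f \in C^2$, or $f$ with $f'$ Lipschitz), where the standard change of variables formula for Young/Z\"ahle integrals applies since the paths of $X$ are H\"older of order $\alpha > 1/2$; (ii) take a sequence $f_n$ of smooth functions with $f_n \to f$ and $f_n' \to f'$ in an appropriate sense, so that $f_n(X_t) \to f(X_t)$ and $f_n(X_0) \to f(X_0)$ pointwise (almost surely); (iii) show that $\ZSint_0^t f_n'(X_s)\,dX_s \to \ZSint_0^t f'(X_s)\,dX_s$ almost surely. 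Combining the three pieces gives \eqref{eq:Ito}.

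For step (ii)--(iii) the natural construction is mollification: let $f_n = f * \varphi_n$ for a standard mollifier $\varphi_n$. Then $f_n \to f$ uniformly on compacts at continuity points of $f$ (and $f$ is continuous everywhere here, being absolutely continuous), and $f_n' = f' * \varphi_n$ converges to $f'$ pointwise at every continuity point of $f'$ — hence on the complement of a countable set — while the total variation of $f_n'$ on any compact interval is bounded by that of $f'$. The key convergence $\ZSint_0^t f_n'(X_s)\,dX_s \to \ZSint_0^t f'(X_s)\,dX_s$ should be obtained exactly as in the proof of Theorem~\ref{the:ZSIntegral}: the Z\"ahle integral is controlled by the Gagliardo/fractional-Sobolev seminorm of $s \mapsto f_n'(X_s)$, and one shows this seminorm is uniformly bounded in $n$ and that $f_n'(X_\cdot) \to f'(X_\cdot)$ in the relevant fractional Sobolev space. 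The uniform bound comes from the sufficient-variability estimate: the bound in \eqref{eq:sufficient} applied to $f'$ depends only on $\sup_y \E\int_0^T |X_s - y|^{-\theta/\alpha}\,ds$ and on the variation of $f'$ (not on which specific function of bounded variation we plug in), so it passes uniformly to $f_n'$. Convergence in the fractional Sobolev space then follows by dominated convergence, using that $f_n'(X_s) \to f'(X_s)$ for almost every $s$ almost surely (since $X_s$ avoids the countable discontinuity set of $f'$ by Assumption~\ref{ass:Density}) together with the established domination.

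The main obstacle I anticipate is step (iii), specifically making the convergence of the Z\"ahle--Stieltjes integrals rigorous when passing to the limit in the integrand. One must handle the fractional derivative $D^{1-\alpha}_{t-}$ acting on $Y = X$ and the weighted fractional derivative of $f_n'(X_\cdot)$ simultaneously; the cleanest route is to reuse verbatim the machinery developed for Theorem~\ref{the:ZSIntegral} (presumably in the proofs section and appendices), in particular the estimate bounding $|\ZSint_0^t g(X_s)\,dX_s|$ in terms of a Gagliardo-type seminorm of $g \circ X$ and a H\"older norm of $X$, and then invoking uniform integrability of the relevant quantities in $n$. A secondary technical point is the boundary term $f'(X_{0+})$: one needs $f_n'(X_{0+}) \to f'(X_{0+})$ almost surely, which holds because $X_{0+} = X_0$ (path continuity) is almost surely not a discontinuity point of $f'$. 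Once these convergences are in place, \eqref{eq:Ito} for general $f$ follows by taking $n \to \infty$ in the smooth version $f_n(X_t) - f_n(X_0) = \ZSint_0^t f_n'(X_s)\,dX_s$.
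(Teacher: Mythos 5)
Your overall architecture coincides with the paper's: mollify $f$ and $f'$, apply Z\"ahle's change-of-variables formula to the Lipschitz approximations $f_n'=f'*\varphi_n$ (so that $f_n'\circ X$ is H\"older of order $\alpha>1/2$ and the smooth identity $f_n(X_t)-f_n(X_0)=\ZSint_0^t f_n'(X_s)\,dX_s$ holds), and pass to the limit on both sides using the bound of Proposition~\ref{the:ZSIntegralBound}, which reduces everything to showing $\|f_n'\circ X-f'\circ X\|_{W^\theta_1(0+)}\to 0$. The weighted $L^1$ part of that norm is indeed handled by dominated convergence exactly as you describe, and your treatment of the left-hand side and of the boundary value $f'(X_{0+})$ is fine.

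However, the step you dispose of with ``convergence in the fractional Sobolev space then follows by dominated convergence, using \dots the established domination'' is a genuine gap, and it is precisely the part of the argument on which the paper spends an entire appendix (Proposition~\ref{the:SmoothGagliardo} together with Lemmas~\ref{the:SmoothLS} and~\ref{the:MeanContinuity}). For the Gagliardo part $[f_n'\circ X-f'\circ X]_{\theta,1}$ there is no $n$-uniform integrable dominating function for the integrand $|f_n'(X_t)-f_n'(X_s)|\,|t-s|^{-1-\theta}$: the control coming from Proposition~\ref{the:GagliardoNonrandom} goes through the variation measure $\mu_{f_n'}$ of the \emph{mollified} function, and mollification smears mass onto level pairs $(X_s,X_t)$ that $\mu_{f'}$ does not charge (take $f'=1_{[0,\infty)}$: then $|f_n'(X_t)-f_n'(X_s)|$ is nonzero whenever $X_s$ and $X_t$ straddle a point within $1/n$ of $0$, while $|f'(X_t)-f'(X_s)|=0$ there); enlarging the relevant interval to obtain an $n$-free majorant produces a non-integrable singularity in $|t-s|$. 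Uniform boundedness of $\E[f_n'\circ X]_{\theta,1}$, which your sufficient-variability observation does give, is not enough to conclude $L^1$ convergence of the difference. The paper's resolution is different in kind: it writes $\E\{(1+[X]_{\alpha,\infty})^{-q}\int\!\!\int|Z_n|\}$ as $\int h\,d\mu_{f_n'}$ for a kernel $h$ that is shown to be bounded and \emph{continuous} (Lemma~\ref{the:MeanContinuity}), uses weak convergence $\mu_{f_n'}\to\mu_{f'}$ (Lemma~\ref{the:SmoothLS}) to obtain convergence of these integrals, and then upgrades a.e.\ convergence plus convergence of the $L^1$ norms to $L^1$ convergence of the difference via a Scheff\'e-type lemma, not dominated convergence. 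You would need to supply an argument of this type (or a genuine uniform-integrability estimate in $n$, which is essentially what part (ii) of the proof of Lemma~\ref{the:MeanContinuity} provides) to close your step (iii).
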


\subsection{Riemann--Stieltjes integrability}

A tagged partition of $[0,T]$ is a sequence $\pi = (t_0,\dots, t_k; \xi_1,\dots,\xi_k)$ such that $0=t_0 < t_1 < \cdots < t_k=T$ and $\xi_i \in [t_{i-1},t_i]$ for all $i$. The mesh of $\pi$ is denoted by $||\pi|| = \max_i (t_i-t_{i-1})$, and the Riemann--Stieltjes sum of $f$ against $g$ along $\pi$ by
\[
 \RSsum(f,g,\pi)
 \weq \sum_{i=1}^k f(\xi_i) (g(t_i)-g(t_{i-1})).
\]
If $\RSsum(f,g,\pi_n)$ converges to a unique limit for any sequence of tagged partitions such that $||\pi_n|| \to 0$, then this limit is denoted $\RSint_0^T f_t d g_t$ and called the Riemann--Stieltjes integral of $f$ against $g$.


The next result concerns the Riemann--Stieltjes integrability of random processes and can be viewed as the main result of this paper. We denote convergence in probability by $\prto$.

\begin{theorem}
\label{the:RSIntegral}
\changedii{
Let $X$ be a stochastic process such that
$X_t$ admits a probability density function $x \mapsto p_t(x)$ for almost every $t \in (0,T)$ which is bounded according to Assumption~\ref{ass:Density} and satisfies
\begin{equation}
 \label{eq:DensityBound}
 \sup_{\abs{x} \ge \epsilon/2} \! p_t(x) \wle c_0
\end{equation}
for some constants $\epsilon \ge 0$ and $c_0 < \infty$; and
\begin{equation}
 \label{eq:MeanContinuity}
 \sup_{0 \le s < t \le T} \frac{\left(\E \abs{X_t - X_s}^{r}\right)^{1/r}}{\abs{t-s}^{H}} < \infty
\end{equation}
for some $H \in (0,1]$ and $r > 1/H$.  Let $f: \R \to \R$ be of locally finite variation such that $f(X_{0+})$ exists almost surely and $f$ restricted to $(-\epsilon,\epsilon)$ is Lipschitz continuous. Then for any \Holder continuous process $Y$ of order $\beta > 1-(H-1/r)$} there exists a random variable $\RSint_0^T f(X_t) \, dY_t$ such that
\[
 \RSint_0^T f(X_t)\, d Y_t
 \weq \ZSint_0^T f(X_t)\, d Y_t
 \qquad \text{a.s.}
\]
and
\begin{equation}
 \label{eq:RSLimit}
 \RSsum(f \circ X,Y,\pi_n)
 \ \prto \ \RSint_0^T f(X_t)\, d Y_t
\end{equation}
for any sequence of tagged partitions with $||\pi_n|| \to 0$. Furthermore, the convergence in \eqref{eq:RSLimit} holds almost surely for any sequence of tagged partitions satisfying 
$
 \sum_{n=1}^\infty ||\pi_n||^a < \infty
$
for some \changedii{$a < \frac{r}{1+r} H - (1-\beta)$}.
\end{theorem}

\changedi{
In contrast with the earlier results regarding fractional Zähle--Stieltjes integrals, in Theorem \ref{the:RSIntegral} we need to impose slightly stronger conditions than Assumption~\ref{ass:Density} on the sufficient variability of $X$, to  be able to prove the convergence of Riemann--Stieltjes sums. \changedii{Condition~\eqref{eq:DensityBound} with $\epsilon = 0$ essentially means that the probability density function of $X_t$ is uniformly bounded over $t \in (0,T)$ and $x \in \R$.
This condition is valid e.g.\ when $X$ is a stationary processes with a bounded density, but typically fails for random processes with stationary increments such that $X_0 = 0$ almost surely. Condition~\eqref{eq:DensityBound} with $\epsilon > 0$ relaxes the sufficient variability assumption on $X$, and compensates this by requiring $f$ to be Lipschitz continuous in a neighborhood of $X_0=0$. See Section~\ref{sec:Examples} for examples and further discussion.
}
}

\subsubsection{Dyadic partitions}
The following result considers the \changedi{convergence rate of Riemann--Stieltjes approximations over dyadic partitions.
\begin{theorem}
\label{the:RateDyadic}
Assume that the triplet $(X,Y, f)$ satisfies the assumptions of Theorem \ref{the:RSIntegral}, and let
$\pi_n$ be a tagged partition of $[0,T]$ with uniform mesh of size $T 2^{-n}$. Then \changedii{for any $\epsilon>0$} there exists an almost surely finite random variable $C(\omega)$ such that
\[
 \abs{\RSsum(f \circ X,Y,\pi_n) - \RSint_0^T f(X_t)\, d Y_t}
 \wle \changedii{C 2^{-\left( \frac{r}{1+r} H - (1-\beta) - \epsilon\right) n}}
\]
\changedii{almost surely for all $n$}.
\end{theorem}
}

\subsubsection{Accuracy of interpolations}

For the following result, we need to rule out functions of locally finite variation for which the variation $V_{[-k,k]}(f)$ over the interval $[-k,k]$ grows too fast as $k \to \infty$. A sufficient condition is to assume that the growth rate of $V_{[-k,k]}(f)$ is slow compared to the decay rate of the tail probability distribution of the maximum of $X$ over $[0,T]$, in the sense that
\begin{equation}
 \label{eq:full_support}
 \sum_{k=1}^\infty V_{[-k,k]}(f) \, \pr(||X||_\infty > k-1)^{\frac{1}{p}}
 \ < \ \infty
\end{equation}
for all $p \ge 1$. 

\begin{theorem}
\label{the:RateMean}
\changedi{Assume that the triplet $(X,Y, f)$ satisfies the assumptions of Theorem \ref{the:RSIntegral}, and that the \Holder seminorm $[Y]_{\beta,\infty}$ of $Y$ is bounded by $\E [Y]_{\beta,\infty}^{{p}} < \infty$ for all ${p} \in [1,\infty)$}. Assume further that either $f$ has finite variation, or $f$ satisfies \eqref{eq:full_support}. Then for any \changedii{$a < \frac{r}{1+r} H - (1-\beta)$} there exists a finite constant $c$ such that
\begin{equation}
 \label{eq:L_rate}
 \E \, \abs{ \RSsum(f \circ X,Y,\pi) - \RSint_0^T f(X_t)\, d Y_t }
 \wle c ||\pi||^a
\end{equation}
for all tagged partitions of $[0,T]$ with mesh size $||\pi|| \le 1$.
\end{theorem}

\begin{remark}
\changedi{The condition on the \Holder seminorm $[Y]_{\beta,\infty}$ in Theorem~\ref{the:RateMean} is satisfied by all Gaussian processes \cite{asvy14} that are \Holder continuous of order strictly larger than $\beta$.}
\end{remark}


\subsection{Examples}
\label{sec:Examples}


\subsubsection{Gaussian processes}

\begin{example}[Stationary Gaussian process]
Let $X$ be a stationary $\alpha$-\Holder continuous Gaussian process \changedii{with zero mean and nonzero variance}, e.g.\ a fractional Ornstein--Uhlenbeck process \cite{Cheridito_Kawaguchi_Maejima_2003}. \changedii{Then $X$ automatically satisfies Assumption~\ref{ass:Density} and condition \eqref{eq:DensityBound} of Theorem~\ref{the:RSIntegral} with $\epsilon = 0$. Moreover, Remark~\ref{rem:GaussianHolder} implies that $X$ satisfies condition \eqref{eq:MeanContinuity} of Theorem~\ref{the:RSIntegral} for any $H \in (0,\alpha)$ and $r > 1$. If $f: \R \to \R$ is of locally finite variation, then $f(X_{0+})$ exists almost surely because the set of points at which $f$ is discontinuous is countable. Hence by Theorem~\ref{the:RSIntegral} we may conclude that $t \mapsto f(X_t)$ is almost surely Riemann--Stieltjes integrable against any \Holder continuous process $Y$ of order $\beta > 1 - \alpha$, and Theorem~\ref{the:RateMean} provides error estimates for numeric integration under sufficient additional regularity for $f$ and $Y$.
}
%
\end{example}

\begin{example}[Fractional Brownian motion]
\label{exa:fBm}
Let $X$ and $Y$ be (possibly dependent) fractional Brownian motions with Hurst indices $H_1, H_2 \in (0,1)$, respectively, such that $H_1 + H_2 > 1$, and let $f$ be of locally finite variation and Lipschitz continuous in a neighborhood $(-\epsilon,\epsilon)$ of zero. \changedii{Denote by $\phi(x; \mu, \sigma)$ the Gaussian probability density with mean $\mu$ and standard deviation $\sigma$, and observe that
\begin{equation}
 \label{eq:GaussianDensity1}
 \max_{x \in \R} \phi(x; \mu, \sigma)
 \weq (2\pi)^{-1/2} \sigma^{-1},
\end{equation}
and that the inequality $u e^{-u} \le e^{-1}$ implies
\begin{equation}
 \label{eq:GaussianDensity2}
 \phi(x; 0, \sigma)
 \wle (2/\pi)^{1/2} e^{-1} \sigma x^{-2}.
\end{equation}
Because the distribution of $X_t$ has density $p_t(x) = \phi(x; 0, t^{H_1})$ for all $t > 0$, \eqref{eq:GaussianDensity1} implies that $X$ satisfies Assumption \ref{ass:Density}. Moreover, \eqref{eq:GaussianDensity2} implies that $p_t(x) \le (2/\pi)^{1/2} e^{-1} T^{H_1} (\epsilon/2)^{-2}$ for all $|x| \ge \epsilon/2$ and all $t \in (0,T)$, and the fact that $X$ is self-similar of order $H_1$ and has stationary increments shows that $\left(\E \abs{X_t-X_s}^r\right)^{1/r} = \left(\E \abs{X_1}^r\right)^{1/r} \abs{t-s}^{H_1}$ for all $r > 0$. Hence $X$ satisfies conditions  \eqref{eq:DensityBound} and \eqref{eq:MeanContinuity} of Theorem~\ref{the:RSIntegral}.

Given an arbitrary $0 < a < H_1 + H_2 - 1$, choose $\beta \in (0,H_2)$ close enough to $H_2$ and $r>1$ large enough so that $a < \frac{r}{1+r}H_1 + \beta - 1$ and $H_1 + \beta - 1 > 1/r$.} Furthermore, the supremum of a Gaussian process has Gaussian tails \cite{sam}, and consequently \eqref{eq:full_support} is satisfied for all $f$ for which $V_{[-k,k]}(f)$ has at most exponential growth. Finally, because $Y$ is a Gaussian process with \Holder continuous paths of order $\beta$, then $\E [Y]_{\beta,\infty}^q < \infty$ for all $q \ge 1$ \cite{asvy14}. Hence the triplet $(X,Y,f)$ fulfills all assumptions required in Theorems~\ref{the:RSIntegral} and \ref{the:RateMean}, and we may conclude that
\[
 \E \, \left| \RSsum(f \circ X,Y,\pi) - \RSint_0^T f(X_t)\, d Y_t \right|
 \wle c ||\pi||^{H_1 + H_2 - 1 - \epsilon}
\]
for all $\epsilon =  H_1 + H_2 - 1 - a > 0$. \changedi{We believe that this convergence rate is optimal because it coincides with a similar error estimate for smooth functions.} A similar type of result for $H_1=H_2 > 1/2$, restricted to forward Riemann--Stieltjes sums over a uniform mesh, was discovered in \cite[Theorem 3.1]{A-V}, but with a significantly worse rate of $||\pi||^{H-1/2-\epsilon}$. \changedi{The improved error rate is a consequence of a sharper estimate (Lemma \ref{the:LipschitzCdf}) compared to \cite[Lemma 3.1]{A-V}.}
\end{example}

\begin{echangedii}
\begin{example}[Multifractional Brownian motion]
Consider a multifractional Brownian motion $X$ parametrized by an $\alpha$-Hölder continuous function $H: [0,T] \to (0,1)$, so that
\[
 X_t
 \weq \int_{-\infty}^\infty \Big( (t-u)_+^{H(t)-1/2} - (-u)_+^{H(t)-1/2} \Big) \, W(du),
\]
where $W$ denotes standard Gaussian white noise on $\R$. The variance of this zero-mean Gaussian process can be expressed \cite[Lemma 4.1]{Stoev_Taqqu_2006} as
\[
 \Var X_t
 \weq 2 t^{2H(t)} \frac{\Gamma(H(t)+1/2)^2}{\pi} \int_0^\infty \frac{1-\cos(\xi)}{\xi^{2H(t)+1}} d \xi.
\]
Because
\[
 \int_0^\infty \frac{1-\cos(\xi)}{\xi^{2H(t)+1}} d \xi
 \wge \int_{\pi/2}^{3\pi/2} \frac{1}{\xi^{2H(t)+1}} d \xi
 \wge \int_{\pi/2}^{3\pi/2} \frac{1}{\xi^{3}} d \xi
 \wge \int_{\pi/2}^{\infty} \frac{1}{\xi^{3}} d \xi
 \weq 2 \pi^{-2},
\]
it follows that $\Var X_t \ge c t^{2H(t)}$ with $c = 4 \pi^{-3} \min_{x \ge 1/2} \Gamma(x)$. Because $X_t$ has a Gaussian distribution and $H(t)$ takes values in a compact subinterval of $(0,1)$,
we see with the help of \eqref{eq:GaussianDensity1} that $X$ satisfies Assumption \ref{ass:Density}.
One may also verify that $\sup_{t \in [0,T]} \Var X_t < \infty$, and hence with the help of \eqref{eq:GaussianDensity2} we see that $X$ satisfies condition \eqref{eq:DensityBound} of Theorem~\ref{the:RSIntegral} for any $\epsilon > 0$.
Furthermore, the analysis in \cite{levy-vehel_peltier-1995} implies that $X$ satisfies condition~\eqref{eq:MeanContinuity} of Theorem~\ref{the:RSIntegral} with exponent $\min(\alpha, \Hmin, 1/2)$ in the denominator for any $r \ge 1$, where $\Hmin = \min_{t \in [0,T]} H(t)$. Hence the conclusions of Theorems~\ref{the:RSIntegral}--\ref{the:RateMean} are valid for all sufficiently regular $f$ and $Y$.
\end{example}
\end{echangedii}

\subsubsection{Non-Gaussian processes}

\begin{echangedii}
\begin{example}[fBm-driven SDEs]
Suppose that $X$ is a solution to the SDE 
\[
 X_t
 \weq \int_0^t V_0(X_s) ds + \sum_{i=1}^d \int_0^t V_i(X_s)d B_s^{i},
\]
where $B^{1}, \dots, B^d$ are independent fractional Brownian motions with Hurst parameter $H \in (\frac12, 1)$ and $V_0, \dots, V_d$ are bounded infinitely differentiable functions on the real line such that
$\inf_x \sum_{i=1}^d V_i(x)^2 > 0$.
Choose an arbitrary $\alpha \in (1-H,\frac12)$. Then, by \cite[Theorem 2.1]{Nualart_Rascanu_2002}, the solution exists and is unique in the space of stochastic processes satisfying
\[
 \Vert X \Vert_{\alpha,\infty}
 := \sup_{t\in[0,T]}\left(|X_t| + \int_0^t \frac{|X_t-X_s|}{(t-s)^{\alpha+1}}ds\right) < \infty
\] 
almost surely, and $\E \Vert X \Vert_{\alpha,\infty}^r < \infty$ for all $r \ge 1$. By combining Propositions 4.2 and 4.4 of \cite{Nualart_Rascanu_2002} together with the fact $\Vert f \Vert_\infty \le \Vert f \Vert_{\alpha,\infty}$, we observe that the \Holder seminorm of $X$ is bounded by
\[
 [X]_{1-\alpha,\infty}
 \wle C (\Lambda_\alpha(B)+1)(1+\Vert X\Vert_{\alpha,\infty}),
\]
where, by \cite[Lemma 7.5]{Nualart_Rascanu_2002}, the random variable $\Lambda_\alpha(B)$ satisfies $\E |\Lambda_\alpha(B)|^r < \infty$ for all $r \ge 1$. This implies that for all $r \ge 1$,
\[
 \sup_{0 \le s<t \le T}\frac{\left(\E|X_t - X_s|^r\right)^{1/r}}{|t-s|^{1-\alpha}}
 < \infty.
\]
Furthermore, by \cite[Theorem 1.5]{baudoin_nualart_ouyang_tindel-2016}, the solution admits a probability density function $p_t$ satisfying 
\[
 p_t(x)
 \wle c_1t^{-H}e^{-\frac{x^2}{c_2t^{2H}}}
\]
for some constants $c_1,c_2>0$. Consequently, $X$ satisfies Assumption~\ref{ass:Density} and conditions \eqref{eq:DensityBound}--\eqref{eq:MeanContinuity} of Theorem~\ref{the:RSIntegral} for any given $\epsilon > 0$. Hence we may apply Theorems~\ref{the:RSIntegral}--\ref{the:RateMean} for all sufficiently regular $f$ and $Y$.
\end{example}
\end{echangedii}

\begin{echangedii}
\begin{example}[Rosenblatt process]
A Hermite process of order $n \ge 1$ with Hurst parameter $H \in (1/2,1)$ is defined as a multiple Wiener--\Ito integral
\[
 X_t
 \weq \int_{\R^n} \int_0^t \left( \prod_{k=1}^n (s-u_k)_+^{- \left( \frac12  + \frac{1-H}{k} \right) } \right) ds \, dB(u_1) \cdots dB(u_n),
\]
where $B$ is a standard Brownian motion. This process is $H$-self-similar and has stationary increments, but is not Gaussian expect in the special case $n=1$ where it reduces to a fractional Brownian motion~\cite{maejima_tudor-2007}. The case $n=2$ is known as the Rosenblatt process, and in this case $X_1$ has finite moments of all orders and a bounded infinitely differentiable density function \cite{Veillette_Taqqu_2013}. Self-similarity implies that $p_t(x) = t^{-H}p_1(t^{-H}x)$. Moreover, because $\E \abs{X_1}$ is finite, it follows that $p_t(x) \le ( \sup_u \abs{u} p_1(u)) \abs{x}^{-1}$ for all $x \ne 0$.
Furthermore, self-similarity and stationary increments imply that
$
 \left( \E \abs{X_t - X_s}^{r} \right)^{1/r}
 = \left(\E \abs{X_1}^r \right)^{1/r} \abs{t-s}^{H}
$
for any $r > 0$. These properties imply that $X$ satisfies Assumption~\ref{ass:Density} and conditions \eqref{eq:DensityBound}--\eqref{eq:MeanContinuity} of Theorem~\ref{the:RSIntegral} for any $\epsilon > 0$. Hence the conclusions of Theorems~\ref{the:RSIntegral}--\ref{the:RateMean} are valid for all sufficiently regular $f$ and $Y$.

\end{example}
\end{echangedii}

\begin{echangedii}
\begin{example}[Iterated fractional Brownian motion]
Consider an iterated fractional Brownian motion defined by
\[
 X_n(t)
 \weq B_n \Big( \cdots \big( B_3 \big( B_2 \big(B_1(t) \big) \big) \big) \cdots \Big),
\]
where $B_1,\dots, B_n$ are independent two-sided fractional Brownian motions with Hurst indices $H_1,\dots, H_n$. When 
$H_i = 1/2$ for all $i$, this process reduces to the iterated Brownian motion, a process which has been actively studied  \cite{burdzy-1993,Casse_Marckert_2016,Curien_Konstantopoulos_2014}.
Conditioning shows that
\begin{equation}
 \label{eq:IteratedFirst}
 X_n(t) - X_n(s)
 \ \eqd \ \abs{X_{n-1}(t) - X_{n-1}(s)}^{H_n} B_n(1),
\end{equation}
and by repeating this we obtain
\begin{equation}
 \label{eq:Iterated}
 X_n(t) - X_n(s)
 \ \eqd \ \abs{t-s}^{H} \left( \prod_{k=1}^{n-1} \abs{B_k(1)}^{ \frac{H_1 \cdots H_n}{H_1 \cdots H_k} } \right)B_n(1)
\end{equation}
with $H = H_1 \cdots H_n$.

Equation \eqref{eq:IteratedFirst} with $s=0$ shows that the conditional distribution of $X_n(t)$ given $X_{n-1}(t)$ is Gaussian with mean zero and standard deviation $|X_{n-1}(t)|^{H_n}$, so that the unconditional distribution of $X_n(t)$ has density function
\[
 p_t(x)
 \weq \E \phi(x; 0, |X_{n-1}(t)|^{H_n}),
\]
where $\phi(x; \mu, \sigma)$ denotes Gaussian probability density with mean $\mu$ and standard deviation $\sigma$. We see by applying \eqref{eq:Iterated} with $s=0$ that
\[
 p_t(0)
 \weq (2\pi)^{-1/2} \E |X_{n-1}(t)|^{-H_n}
 \weq (2\pi)^{-1/2} \left(\prod_{k=1}^{n-1} \E |B_k(1)|^{- \frac{H_1 \cdots H_n}{H_1 \cdots H_k} }\right) |t|^{-H}.
\]
Because $p_t(x)$ attains its maximum at $x=0$ and the right side above is integrable over $[0,T]$, we conclude that $X$ satisfies Assumption \ref{ass:Density}. Observe next with the help of \eqref{eq:GaussianDensity2} that
\begin{align*}
 p_t(x)
 &\wle (2/\pi)^{1/2} e^{-1}  x^{-2} \E |X_{n-1}(t)|^{H_n} \\
 &\wle (2/\pi)^{1/2} e^{-1}  (\epsilon/2)^{-2} \left(\prod_{k=1}^{n-1} \E |B_k(1)|^{\frac{H_1 \cdots H_n}{H_1 \cdots H_k} }\right) |T|^H
\end{align*}
for all $|x| > \epsilon/2$ and $t > 0$. Hence $X$ satisfies condition \eqref{eq:DensityBound} of Theorem~\ref{the:RSIntegral}. A multivariate version of \eqref{eq:Iterated} shows that $X_n$ is self-similar of order $H$, as was found for the Brownian case in \cite{Curien_Konstantopoulos_2014}. Equation \eqref{eq:Iterated} also implies that
\[
 \left( \E |X_n(t) - X_n(s)|^r \right)^{1/r}
 \weq \left( \prod_{k=1}^{n} \E \abs{B_k(1)}^{ \frac{H_1 \cdots H_n}{H_1 \cdots H_k} r } \right) ^{1/r} \abs{t-s}^{H}
\]
for all $r > 0$. Hence $X$ also satisfies condition \eqref{eq:MeanContinuity} of Theorem~\ref{the:RSIntegral}. Hence the conclusions of Theorems~\ref{the:RSIntegral}--\ref{the:RateMean} are valid for all sufficiently regular $f$ and $Y$.
\end{example}
\end{echangedii}

\begin{echangedii}
\begin{example}[Weak Brownian motion]
Let $X$ be a weak Brownian motion~\cite{follmer_wu_yor-2000} of order $k \ge 2$ which has the same $k$-dimensional marginal distributions as a standard Brownian motion, but is not a Brownian motion. Because the one and two dimensional marginal distributions coincide with those of the standard Brownian motion, it follows that $X_t$ follows a Gaussian distribution with mean zero and standard deviation $t^{1/2}$, and
\[
 \left( \E \abs{X_t - X_s}^{r} \right)^{1/r}
 \weq \left( \E \abs{X_1}^{r} \right)^{1/r} \abs{t-s}^{1/2}.
\]
Consequently, we may apply the same arguments as in Example~\ref{exa:fBm} to conclude that $X$ satisfies Assumption~\ref{ass:Density} and conditions~\eqref{eq:DensityBound} and~\eqref{eq:MeanContinuity} of Theorem~\ref{the:RSIntegral} with $H=1/2$ and arbitrary $r > 2$ and $\epsilon > 0$. 
\end{example}
\end{echangedii}

\begin{echangedi}
\begin{example}[Financial portfolio]
Consider a stochastic process of the form
\[
 S_t = S_0e^{\sigma X_t - g(t,\sigma)}
\]
where $\sigma > 0$ is a constant, $t \mapsto g(t,\sigma)$ is an $\alpha$-\Holder continuous drift function, and 
$X$ is an $\alpha$-\Holder continuous random process such that $\sup_{t\in[0,T]}|X_t|$ has all exponential moments. When $\alpha > \frac12$ and $f$ is any function that satisfies the assumptions of Theorem \ref{the:RSIntegral}, then
%
%
\[
\int_0^T f(S_t)\, d S_t
\]
exists as a Riemann--Stieltjes integral. Here $S_t$ can be interpreted as a stock price, and the integral as the value of a portfolio with $f(S_t)$ as the amount of stock at time $t$. Because $\sup_{t\in[0,T]}|X_t|$ has exponential moments, we may use Theorem \ref{the:RSIntegral} to obtain 
\[
 \E \, \left| \RSsum(f \circ S,S,\pi) - \RSint_0^T f(S_t)\, d S_t \right|
 \wle c ||\pi||^{\frac{r}{1+r}H + \beta - 1 - \epsilon}.
\]
The Riemann--Stieltjes sum $\RSsum(f \circ S,S,\pi)$ can be interpreted as the value of a hedging portfolio constructed by approximating a continuous trading strategy.
\end{example}
\end{echangedi}

\begin{echangedi}
\begin{example}[Fractional splines]
Fractional splines are $\alpha$-\Holder continuous functions which can be represented in the form
\begin{equation}
 \label{eq:Spline}
 x_t \weq \sum_{k \in \Z} a_k (t-t_k)_+^\alpha,
\end{equation}
for some coefficients $a_k \in \R$ such that $\sum_k |a_k| < \infty$, and an ordered sequence of numbers $\cdots < t_{-1} < t_0 < t_1 < \cdots$ called knots. Such functions were introduced in \cite{Unser_Blu_2000} as generalisations of polynomial splines and have since found applications in various areas of science and engineering involving fractional-order signals \cite{Monje_Chen_Vinagre_Xue_Feliu_2010, Sheng_Chen_Qiu_2012}. A natural class of $\alpha$-\Holder continuous stochastic processes
\begin{equation}
 \label{eq:RandomSpline}
 X_t \weq \sum_{k \in \Z} \xi_k (t-t_k)_+^\alpha,
\end{equation}
is obtained by replacing the coefficients $a_k$ in \eqref{eq:Spline} by random variables $\xi_k$. Then a fundamental question in nonlinear stochastic control theory is to know when $t \mapsto f(X_t)$ is integrable against the paths of a $\beta$-\Holder continuous process $Y$.

If we assume that $\xi_k = w_k \eta_k$ for some constants $w_k$ such that $\sum_k \abs{w_k} < \infty$ and for some independent and identically distributed random variables $\eta_k$ having a bounded probability density function and finite moments of order $r > (\frac{\alpha}{1-\beta}-1)^{-1}$, then the process $X$ defined by \eqref{eq:RandomSpline} satisfies Assumption \ref{ass:Density} and the assumptions of Theorem \ref{the:RSIntegral}, so that $f \circ X$ is Riemann--Stieltjes integrable against an arbitrary $\beta$-\Holder continuous process, for a wide class of potentially discontinuous functions $f$.
\end{example}
\end{echangedi}

\section{Proofs}
\label{sec:proofs}
\subsection{Fractional integrals and derivatives}
\label{subsec:fractional_integrals}
\subsubsection{Riemann--Liouville operators}
Let us recall some basic facts and notations of fractional integrals and derivatives from \cite{Samko_Kilbas_Marichev_1993}. Let $(a,b) \subset \R$ be a nonempty bounded interval. For $p \in [1,\infty)$ (resp.\ $p=\infty$), we denote by $L_p = L_p(a,b)$ the space of $p$-integrable (resp.\ essentially bounded) real functions on $(a,b)$. The fractional left and right Riemann--Liouville integrals of order $\alpha > 0$ of a function $f \in L_1$ are denoted\footnote{Here we follow Zähle's (and Liouville's) convention of including $(-1)^{-\alpha} = e^{- i \pi \alpha}$ into the definition of right-sided fractional integrals \cite{Zahle_1998}.} by
\[
 I^\alpha_{a+}f(t)
 \weq \frac{1}{\Gamma(\alpha)} \int_a^t \frac{f(s)}{(t-s)^{1-\alpha}} \, ds
\]
and
\[
 I^\alpha_{b-}f(t)
 \weq \frac{(-1)^{-\alpha}}{\Gamma(\alpha)} \int_t^b \frac{f(s)}{(t-s)^{1-\alpha}} \, ds.
\]
The above integrals converge for almost every $t \in (a,b)$, and $I^\alpha_{a+} f$ and $I^\alpha_{b-}$ may be considered as functions in $L_1$. By convention, $I^0_{a+}$ and $I^0_{b-}$ are defined as the identity operator. Moreover, the integral operators $I^\alpha_{a+}, I^\alpha_{b-}: L_1 \to L_1$ are linear and one-to-one. The inverse operators are known as Riemann--Liouville fractional derivatives, and denoted by $I^{-\alpha}_{a+} = (I^\alpha_{a+})^{-1}$ and $I^{-\alpha}_{b-} = (I^\alpha_{b-})^{-1}$. Because $L_p(a,b) \subset L_1(a,b)$ for every $p \in [1,\infty]$, we may also consider the fractional integrals as linear operators from $L_p$ into $L_1$, with ranges denoted by $I^\alpha_{a+}(L_p)$ and $I^\alpha_{b-}(L_p)$.

For any $\alpha \in (0,1)$ and for any $f \in I^\alpha_{a+}(L_1)$ and $g \in I^\alpha_{b-}(L_1)$, the Weyl--Marchaud derivatives 
\[
 D_{a+}^\alpha f(t)
 \weq \frac{1}{\Gamma(1-\alpha)}\left( \frac{f(t)}{(t-a)^\alpha} + \alpha \int_a^t \frac{f(t)-f(s)}{(t-s)^{\alpha+1}} \, ds \right)
\]
and
\[
 D_{b-}^\alpha g(t)
 \weq \frac{(-1)^\alpha}{\Gamma(1-\alpha)}\left( \frac{g(t)}{(b-t)^\alpha} + \alpha \int_t^b \frac{g(t)-g(s)}{(s-t)^{\alpha+1}} \, ds \right)
\]
are well defined, and coincide with the Riemann--Liouville derivatives according to $D_{a+}^\alpha f(t) = I^{-\alpha}_{a+} f(t)$ and $D_{b-}^\alpha g(t) = I^{-\alpha}_{b-} g(t)$ for almost every $t \in (a,b)$.

\subsubsection{Fractional Zähle--Stieltjes integrals}

Let $f$ and $g$ be measurable real functions defined on some set containing the interval $(a,b)$. Assume that the limits $f(a+), g(a+), g(b-)$ exist in $\R$, and denote $f_{a+}(t) = f(t) - f(a+)$ and $g_{b-}(t) = g(t) - g(b-)$. When $f_{a+} \in I^\alpha_{a+}(L_p)$ and $g_{b-} \in I^{1-\alpha}_{b-}(L_q)$ for some $\alpha \in [0,1]$ and $p,q \in [1,\infty]$ such that $1/p+1/q =1$,
the fractional version of the Stieltjes integral introduced by Zähle \cite{Zahle_1998} is defined by
\begin{equation}
 \label{eq:ZSIntegral}
 \begin{aligned}
 \ZSint_a^b f_t \, dg_t
 &\weq (-1)^\alpha \int_a^b D^{\alpha}_{a+} (f-f(a+))(t) \, D^{1-\alpha}_{b-} (g-g(b-))(t) \, dt \\
 &\qquad \qquad + f(a+)(g(b-)-g(a+)),
 \end{aligned}
\end{equation}
where the right side does not depend on $\alpha$. 



\subsubsection{Fractional Sobolev--Slobodeckij spaces}


To help treating fractional integrals and derivatives in a convenient way, we introduce the following fractional Sobolev--Slobodeckij type space below. For $\alpha > 0$, we denote by $W^\alpha_1(a+)$ the space of measurable functions $f: (a,b) \to \R$ such that $f(a+) \in \R$ exists and
\[
 ||f||_{W^\alpha_1(a+)}
 \weq \int_a^b \frac{|f(t)|}{(t-a)^\alpha} \, dt + \int_a^b \int_a^t \frac{|f(t)-f(s)|}{|t-s|^{1+\alpha}} \, ds \, dt
\]
is finite. We denote by $W^\alpha_\infty(b-)$ the space of measurable functions $f: (a,b) \to \R$ such that $f(b-) \in \R$ exists and
\[
 ||f||_{W^\alpha_\infty(b-)}
 \weq \sup_{t \in (a,b)} \frac{|f(b-) - f(t)|}{(b-t)^\alpha} + \sup_{t \in (a,b)} \int_t^b \frac{|f(t)-f(s)|}{|t-s|^{1+\alpha}} \, ds 
\]
is finite. The spaces $W^\alpha_1(a+)$ and $W^\alpha_\infty(b-)$ contain all \Holder continuous functions of order $\alpha'>\alpha$.
More importantly, these spaces also contain discontinuous functions of infinite power variation, as we will see later.


The following result may be considered as a mild generalisation of a similar estimate in \cite{Nualart_Rascanu_2002}, where a subspace of \Holder continuous functions was employed in place of $W^\alpha_\infty(b-)$. See also \cite[Theorem 1.1]{Zahle_2001} for a similar result for square integrable functions. 
\begin{proposition}
\label{the:ZSIntegralBound}
Assume that $f \in W^\alpha_1(a+)$ and $g \in W^{1-\alpha}_\infty(b-)$ for some $\alpha \in (0,1)$. Then the integral in \eqref{eq:ZSIntegral} is well defined, representable as
\begin{equation}
 \label{eq:ZSIntegralSimple}
 \ZSint_a^b f_t \, dg_t
 \weq (-1)^\alpha \int_a^b D^{\alpha}_{a+} f(t) \, D^{1-\alpha}_{b-} (g-g(b-))(t) \, dt,
\end{equation}
and bounded by
\begin{equation}
 \label{eq:ZSIntegralBound}
 \left|\ZSint_a^b f_t \, dg_t \right|
 \wle \frac{||f||_{W^\alpha_1(a+)} \, ||g||_{W^{1-\alpha}_\infty(b-)}}{\Gamma(\alpha) \Gamma(1-\alpha)}.
\end{equation}
\end{proposition}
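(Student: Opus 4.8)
The plan is to reduce everything to the product integral that already appears in the definition \eqref{eq:ZSIntegral}, and to control that product by a pointwise H\"older-type bound on $D^{1-\alpha}_{b-}(g-g(b-))$ together with an $L_1$ bound on $D^\alpha_{a+}f$. First I would record the elementary pointwise estimates
\[
 |D^\alpha_{a+}f(t)|
 \wle \frac{1}{\Gamma(1-\alpha)}\left( \frac{|f(t)|}{(t-a)^\alpha} + \alpha \int_a^t \frac{|f(t)-f(s)|}{(t-s)^{\alpha+1}}\,ds \right)
\]
and the analogous bound for $D^{1-\alpha}_{b-}(g-g(b-))$, read straight off the Weyl--Marchaud formulas in Section~\ref{subsec:fractional_integrals}. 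Integrating the first of these over $t\in(a,b)$ and using Tonelli gives exactly
\[
 \int_a^b |D^\alpha_{a+}f(t)|\,dt
 \wle \frac{||f||_{W^\alpha_1(a+)}}{\Gamma(1-\alpha)},
\]
while taking the supremum over $t$ in the second gives
\[
 \sup_{t\in(a,b)} |D^{1-\alpha}_{b-}(g-g(b-))(t)|
 \wle \frac{||g||_{W^{1-\alpha}_\infty(b-)}}{\Gamma(\alpha)}.
\]
Note here that because I am differentiating $g-g(b-)$, which vanishes at $b-$, the numerator $|(g-g(b-))(t)|=|g(b-)-g(t)|$ is precisely the first term in $||g||_{W^{1-\alpha}_\infty(b-)}$, so no boundary term is lost.

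Next I would address well-definedness, i.e.\ that $f$ and $g$ under the stated hypotheses actually sit in the fractional-integral ranges $I^\alpha_{a+}(L_p)$ and $I^{1-\alpha}_{b-}(L_q)$ required for \eqref{eq:ZSIntegral} to make sense. Since $||f||_{W^\alpha_1(a+)}<\infty$ forces $D^\alpha_{a+}f\in L_1$ by the computation above, and the Riemann--Liouville operator $I^\alpha_{a+}$ is a bijection $L_1\to L_1$ with the Weyl--Marchaud derivative as its inverse, we get $f-f(a+) = f_{a+} \in I^\alpha_{a+}(L_1)$; similarly $g-g(b-)=g_{b-}\in I^{1-\alpha}_{b-}(L_\infty)\subset I^{1-\alpha}_{b-}(L_1)$. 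Hence we are in the case $p=1$, $q=\infty$ of Z\"ahle's definition, and $1/p+1/q=1$ holds. The integrand in \eqref{eq:ZSIntegral} is a product of an $L_1$ function and an $L_\infty$ function, hence integrable, so the right-hand side is a well-defined real number; and by the $\alpha$-independence asserted after \eqref{eq:ZSIntegral} it equals the value for any admissible exponent. Then I would justify the simplified representation \eqref{eq:ZSIntegralSimple}: by linearity of $D^\alpha_{a+}$ we may split $D^\alpha_{a+}(f-f(a+)) = D^\alpha_{a+}f - f(a+)\,D^\alpha_{a+}1$, and $D^\alpha_{a+}1(t) = \frac{(t-a)^{-\alpha}}{\Gamma(1-\alpha)}$ (the second term in the Weyl--Marchaud formula vanishes since the constant function has no increments); substituting this and integrating the extra term against $D^{1-\alpha}_{b-}(g-g(b-))$ reproduces exactly the correction term $f(a+)(g(b-)-g(a+))$ in \eqref{eq:ZSIntegral} (up to the $(-1)^\alpha$ prefactor), using the fractional integration-by-parts identity $\int_a^b (I^\alpha_{a+}\varphi)\psi = \int_a^b \varphi (I^\alpha_{b-}\psi)$ applied to $\varphi = \frac{(-1)^\alpha (t-a)^{-\alpha}}{\Gamma(1-\alpha)}$ and $\psi = D^{1-\alpha}_{b-}(g-g(b-))$, whose $I^{1-\alpha}_{b-}$-image is $g-g(b-)$ evaluated appropriately. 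With \eqref{eq:ZSIntegralSimple} in hand, the bound \eqref{eq:ZSIntegralBound} follows immediately: $|(-1)^\alpha|=1$, and
\[
 \left| \int_a^b D^\alpha_{a+}f(t)\,D^{1-\alpha}_{b-}(g-g(b-))(t)\,dt \right|
 \wle \Big(\sup_t |D^{1-\alpha}_{b-}(g-g(b-))(t)|\Big) \int_a^b |D^\alpha_{a+}f(t)|\,dt
 \wle \frac{||f||_{W^\alpha_1(a+)}\,||g||_{W^{1-\alpha}_\infty(b-)}}{\Gamma(\alpha)\Gamma(1-\alpha)}.
\]

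I expect the main obstacle to be the bookkeeping around \eqref{eq:ZSIntegralSimple}, specifically verifying that the constant-function term dropped from $D^\alpha_{a+}(f-f(a+))$ is exactly compensated by the boundary term $f(a+)(g(b-)-g(a+))$, which requires the fractional integration-by-parts formula and a careful handling of the $(-1)^\alpha$ conventions that Z\"ahle and Liouville use for right-sided operators (flagged in the paper's footnote). A secondary, more technical point is ensuring all integrals in the Fubini/Tonelli exchange when bounding $\int_a^b|D^\alpha_{a+}f|$ are genuinely finite rather than merely formally manipulated --- but this is guaranteed from the start by the hypothesis $||f||_{W^\alpha_1(a+)}<\infty$, so Tonelli applies to the nonnegative integrand without qualification. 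Everything else is routine: pointwise domination, one application of H\"older with exponents $1$ and $\infty$, and the bijectivity of the Riemann--Liouville operators quoted in Section~\ref{subsec:fractional_integrals}.
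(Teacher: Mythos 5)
Your proof follows essentially the same route as the paper: bound $\| D^\alpha_{a+} f \|_{L_1}$ and $\| D^{1-\alpha}_{b-}(g-g(b-)) \|_{L_\infty}$ by the respective $W$-norms, verify range membership so that the representation without the boundary term applies, and finish with H\"older for the exponent pair $(1,\infty)$; your only real deviation is rederiving \eqref{eq:ZSIntegralSimple} by hand via $D^\alpha_{a+}1(t)=(t-a)^{-\alpha}/\Gamma(1-\alpha)$ where the paper simply cites Z\"ahle's Eq.~(22'). One imprecision worth fixing: $I^\alpha_{a+}$ is injective but \emph{not} surjective on $L_1$, so "bijectivity" does not by itself upgrade $D^\alpha_{a+}f\in L_1$ to $f\in I^\alpha_{a+}(L_1)$; that implication is the content of the range characterization in Samko--Kilbas--Marichev (Theorem 13.2), which is exactly the citation the paper uses at this step, and with it your argument is sound.
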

\begin{proof}
The definitions of $W^\alpha_1(a+)$ and $W^{1-\alpha}_\infty(b-)$ guarantee that the Weyl--Marchaud derivatives $D_{a+}^\alpha f$ and $D_{b-}^{1-\alpha} (g-g(b-))$ are well defined and satisfy
\[
 || D^\alpha_{a+} f ||_{L_1}
 \wle \frac{||f||_{W^\alpha_1(a+)}}{\Gamma(1-\alpha)}
\]
and
\[
 || D^{1-\alpha}_{b-}(g-g(b-)) ||_{L_\infty}
 \wle \frac{||g||_{W^{1-\alpha}_\infty(b-)}}{\Gamma(\alpha)}.
\]
With the help of \cite[Theorem 13.2]{Samko_Kilbas_Marichev_1993}, one can verify that in this case $f \in I^\alpha_{a+}(L_1)$ and $g-g(b-) \in I^{1-\alpha}_{b-}(L_\infty)$. Then \eqref{eq:ZSIntegralSimple} follows by \cite[Eq.~(22')]{Zahle_1998}, and \eqref{eq:ZSIntegralBound} follows by plugging in the above two estimates into \eqref{eq:ZSIntegralSimple}.
\end{proof}

\subsection{\Holder and Gagliardo seminorms}

The \Holder seminorm of order $\alpha > 0$ of a measurable function $x: [0,T] \to \R$ is denoted by
\[
 [x]_{\alpha,\infty}
 \weq \sup_{0 \le s < t \le T} \frac{|x(t) - x(s)|}{|t-s|^\alpha},
\]
and the Gagliardo seminorm of order $\alpha>0$ and exponent $p \in [1,\infty)$ by
\begin{equation}
 \label{eq:Gagliardo}
 [x]_{\alpha, p}
 \weq \left( \int_0^T \int_0^T \frac{|x(t) - x(s)|^p}{|t-s|^{1+\alpha p}} \, ds \, dt \right)^{1/p}.
\end{equation}

\begin{lemma}
\label{the:HolderGagliardo}
For any $\alpha > 0$ and $\epsilon>0$,
\begin{align*}
 ||x||_{W^\alpha_1(0+)}
 &\wle (1-\alpha)^{-1} T^{1-\alpha} ||x||_\infty + \frac{1}{2} [x]_{\alpha,1}, \\
 ||x||_{W^\alpha_\infty(T-)}
 &\wle (1+\epsilon^{-1}) T^\epsilon [x]_{\alpha+\epsilon,\infty},
\end{align*}
and
\[
 [x]_{\alpha,1}
 \wle \epsilon^{-1}(1+\epsilon)^{-1} T^{1+\epsilon}  [x]_{\alpha+\epsilon,\infty}.
\]
\end{lemma}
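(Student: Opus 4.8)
The lemma asserts three elementary estimates, and each one is proved by directly comparing the defining integrals/suprema and splitting kernels of the form $(t-a)^{-\alpha}$ or $|t-s|^{-1-\alpha}$ against the Hölder modulus of $x$. I would treat the three displayed inequalities in turn, reusing the third one inside the first two.

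First inequality. Starting from
\[
 \|x\|_{W^\alpha_1(0+)}
 = \int_0^T \frac{|x(t)|}{t^\alpha}\,dt + \int_0^T\!\!\int_0^t \frac{|x(t)-x(s)|}{|t-s|^{1+\alpha}}\,ds\,dt,
\]
I would bound the first term by $\|x\|_\infty \int_0^T t^{-\alpha}\,dt = (1-\alpha)^{-1}T^{1-\alpha}\|x\|_\infty$ (note $\alpha<1$ is implicit, or the statement is read with the convention that the bound is $\infty$ otherwise; I expect the intended reading is $\alpha\in(0,1)$ here). For the double integral, observe it equals $\tfrac12[x]_{\alpha,1}$ since the integrand is symmetric in $(s,t)$ and $[x]_{\alpha,1}=\int_0^T\int_0^T |x(t)-x(s)|\,|t-s|^{-1-\alpha}\,ds\,dt$ integrates over the full square. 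Adding the two bounds gives the claim.

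Third inequality. This is the workhorse: bound $|x(t)-x(s)|\le [x]_{\alpha+\epsilon,\infty}|t-s|^{\alpha+\epsilon}$ inside the definition of $[x]_{\alpha,1}$, so
\[
 [x]_{\alpha,1}
 \le [x]_{\alpha+\epsilon,\infty}\int_0^T\!\!\int_0^T |t-s|^{\epsilon-1}\,ds\,dt.
\]
Then compute the double integral: by symmetry it is $2\int_0^T\!\int_0^t (t-s)^{\epsilon-1}\,ds\,dt = 2\int_0^T \epsilon^{-1}t^\epsilon\,dt = 2\epsilon^{-1}(1+\epsilon)^{-1}T^{1+\epsilon}$. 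Comparing with the stated constant $\epsilon^{-1}(1+\epsilon)^{-1}T^{1+\epsilon}$ I would double-check the factor of $2$ against the normalization of $[x]_{\alpha,1}$; with the convention that the Gagliardo seminorm in \eqref{eq:Gagliardo} integrates over $[0,T]^2$ while $W^\alpha_1(0+)$ uses the triangular region, the constants are meant to be consistent, and this bookkeeping of symmetry factors is really the only place care is needed.

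Second inequality. From the definition,
\[
 \|x\|_{W^\alpha_\infty(T-)} = \sup_{t}\frac{|x(T-)-x(t)|}{(T-t)^\alpha} + \sup_t \int_t^T \frac{|x(t)-x(s)|}{|t-s|^{1+\alpha}}\,ds.
\]
The first supremum is $\le [x]_{\alpha+\epsilon,\infty}\sup_t (T-t)^\epsilon = T^\epsilon[x]_{\alpha+\epsilon,\infty}$. For the second, bound $|x(t)-x(s)|\le[x]_{\alpha+\epsilon,\infty}(s-t)^{\alpha+\epsilon}$ to get $\int_t^T (s-t)^{\epsilon-1}\,ds = \epsilon^{-1}(T-t)^\epsilon \le \epsilon^{-1}T^\epsilon$, uniformly in $t$. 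Summing the two pieces gives $\|x\|_{W^\alpha_\infty(T-)}\le (1+\epsilon^{-1})T^\epsilon[x]_{\alpha+\epsilon,\infty}$, as claimed.

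No step presents a genuine obstacle; all three bounds are one-line kernel estimates followed by an explicit Beta-type integral. The only thing to be careful about is keeping track of the symmetry factor $\tfrac12$ vs.\ $1$ between the triangular-region seminorms in $W^\alpha_1$ and the full-square Gagliardo seminorm, so that the constants match exactly the ones in the statement; I would state the symmetry identity $\int_{[0,T]^2} = 2\int_{0<s<t<T}$ explicitly once and then apply it uniformly.
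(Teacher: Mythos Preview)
Your proof is correct and is exactly the kind of ``straightforward computation'' the paper alludes to; the paper gives no further details, so there is nothing to compare beyond that. Your observation about the factor of $2$ in the third inequality is well taken: with the full-square definition \eqref{eq:Gagliardo} of $[x]_{\alpha,1}$, the computation indeed yields $2\epsilon^{-1}(1+\epsilon)^{-1}T^{1+\epsilon}$ rather than the stated constant, so the paper's displayed bound appears to be off by a factor of $2$ (harmless for all subsequent uses, which only need finiteness).
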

\begin{proof}
These are straightforward computations.
\end{proof}

\subsection{Regularity of composite paths}

In this section we analyse the regularity of $t \mapsto f \circ X(t)$ when $f$ is a function of locally finite variation and $t \mapsto X(t)$ is a random or nonrandom \Holder continuous function. The main result is the following.

\begin{proposition}
\label{the:GagliardoComposite}
Let $X$ be an $\alpha$-\Holder continuous random process such that Assumption~\ref{ass:Density} holds. Then for any function $f$ of locally finite variation, $[f \circ X]_{\theta,p} < \infty$ almost surely for all $\theta \in (0,1)$ and $p \in [1,\infty)$ such that $1 \le p < \alpha/\theta$.
\end{proposition}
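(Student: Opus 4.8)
The plan is to dominate the composite increment $|f(X_t)-f(X_s)|$ by the variation of $f$ over the random interval spanned by $X_s$ and $X_t$, take expectations, and invoke Assumption~\ref{ass:Density} to bound, \emph{uniformly over the level}, the probability that this interval is long. Since $f$ is only \emph{locally} of finite variation and the paths of $X$ need not be bounded, I would first localize on the events
\[
 E_n \ :=\ \{[X]_{\alpha,\infty}\le n\}\cap\{\|X\|_\infty\le n\},
\]
which increase to a set of full probability because $X$ has almost surely $\alpha$-H\"older paths on the compact interval $[0,T]$. On $E_n$ the process stays in $[-n,n]$, where $v_n(x):=V_{[-n,x]}(f)$ is a finite nondecreasing function; let $\nu_n$ be its Lebesgue--Stieltjes measure, so $\nu_n(\R)=V_{[-n,n]}(f)=:V_n<\infty$ and, by additivity of the variation function, $V_{[a,b]}(f)\le\nu_n([a,b])$ for all $-n\le a\le b\le n$. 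Hence on $E_n$, for all $s,t$,
\[
 |f(X_t)-f(X_s)|\ \le\ V_{[X_s\wedge X_t,\,X_s\vee X_t]}(f)\ \le\ \int_\R \ind{X_s\wedge X_t\le y\le X_s\vee X_t}\,\nu_n(dy).
\]

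Next I would handle the power $p$ by the elementary bound $\big(\int_\R \ind{\cdots}\,\nu_n(dy)\big)^p\le V_n^{p-1}\int_\R \ind{\cdots}\,\nu_n(dy)$, valid since the integrand is at most $V_n$ and $p\ge1$; this linearizes the estimate, giving on $E_n$
\[
 [f\circ X]_{\theta,p}^p\,\ind{E_n}\ \le\ V_n^{p-1}\,\ind{E_n}\int_0^T\!\!\int_0^T\!\!\int_\R \frac{\ind{X_s\wedge X_t\le y\le X_s\vee X_t}}{|t-s|^{1+\theta p}}\,\nu_n(dy)\,ds\,dt.
\]
Taking expectations and applying Tonelli, everything reduces to estimating $\pr\big(\{X_s\wedge X_t\le y\le X_s\vee X_t\}\cap E_n\big)$. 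The key observation is that if $y$ lies between $X_s$ and $X_t$ on $E_n$, then $|X_s-y|\le|X_t-X_s|\le n|t-s|^\alpha$, so $X_s$ is forced into an interval of length $2n|t-s|^\alpha$ around $y$; by Assumption~\ref{ass:Density} this probability is at most $2n\,\hat p_s\,|t-s|^\alpha$ for almost every $s$. Performing the $y$-integration (which contributes the total mass $V_n$) leaves
\[
 \E\big[[f\circ X]_{\theta,p}^p\,\ind{E_n}\big]\ \le\ 2n\,V_n^{p}\int_0^T \hat p_s\left(\int_0^T |t-s|^{\alpha-1-\theta p}\,dt\right)ds,
\]
and the inner integral is finite exactly because $\theta p<\alpha$ (i.e.\ $p<\alpha/\theta$), while the outer one is finite by Assumption~\ref{ass:Density}. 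Thus $[f\circ X]_{\theta,p}<\infty$ almost surely on $E_n$, and since $\pr(\bigcup_n E_n)=1$, letting $n\to\infty$ finishes the proof.

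The step I expect to be the crux is the probability estimate. Naively one would hope that $|t-s|$ small forces $|f(X_t)-f(X_s)|$ small, but this fails whenever $f$ has a jump: the path can straddle a discontinuity level of $f$ over arbitrarily short time spans, producing an order-one increment of $f\circ X$, so the variation measure $\nu_n$ carries atoms that the H\"older estimate on $X$ alone cannot absorb. Assumption~\ref{ass:Density} is precisely what rescues the argument, by controlling \emph{uniformly in the level $y$} the chance that $X_s$ sits within distance $n|t-s|^\alpha$ of $y$; the two powers of $n$ and the total mass $V_n$ are harmless once we have localized. Minor points requiring care are the measurability of $(s,t,\omega)\mapsto f(X_t)-f(X_s)$ (routine, $f$ being Borel), the inequality $V_{[a,b]}(f)\le\nu_n([a,b])$, and the bookkeeping that only the a.e.-in-$s$ density bound is used after the $t$-integration, so the null set of exceptional $s$ does not matter.
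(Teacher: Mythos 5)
Your proof is correct, and it takes a genuinely different route from the paper's. The paper first proves a purely \emph{pathwise} bound (Proposition~\ref{the:GagliardoNonrandom}), whose engine is Lemma~\ref{the:HolderSingular}: integrating the singular kernel $(t-s)^{-1-\theta p}$ over $\{s: x_s<y\}$ up to the last crossing time converts the time singularity into a spatial one, $|x_t-y|^{-\theta p/\alpha}$, and only then is expectation taken, using the negative-moment estimate $\E|X_t-y|^{-\theta p/\alpha}\le 1+\tfrac{2}{1-\theta p/\alpha}\hat p_t$ (Lemma~\ref{the:Tikanmaki}) together with a localization on $\{\|X\|_\infty\le k\}$; this also forces a preliminary reduction to right-continuous $f$ via Lemma~\ref{the:RightContinuity}. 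You instead take expectations immediately after Tonelli and fuse the two hypotheses at the level of the two-point event: on $E_n$, $y$ lying between $X_s$ and $X_t$ forces $|X_s-y|\le n|t-s|^\alpha$, so the density bound yields $\pr(\cdot)\le 2n\hat p_s|t-s|^\alpha$, and the factor $|t-s|^\alpha$ beats the kernel exactly when $\theta p<\alpha$ --- the same threshold the paper hits through the exponent $-\theta p/\alpha$ in the negative moment. Your version is more elementary (no singular-integral lemma, no negative-moment lemma) and, because you measure variation over the \emph{closed} random interval with $\nu_n([a,b])\ge V_{[a,b]}(f)$, it dispenses with the right-continuity reduction altogether. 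What the paper's longer route buys is the deterministic intermediate bound of Proposition~\ref{the:GagliardoNonrandom}, which underlies the sufficient condition \eqref{eq:sufficient} of Remark~\ref{rem:sufficient} (applicable to nonrandom paths), and Lemma~\ref{the:HolderSingular} is reused elsewhere (e.g.\ in Lemma~\ref{the:MeanContinuity}); your argument, being tied to expectations from the start, does not produce that pathwise statement. The minor points you flag (joint measurability, $V_{[a,b]}(f)\le\nu_n([a,b])$, and the a.e.-in-$s$ caveat on $\hat p_s$, which is harmless since the exceptional set is Lebesgue-null in the outer $s$-integral) are all handled correctly.
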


Any function $f: \R \to \R$ of locally finite variation can be written as a sum of a right-continuous and a left-continuous function of locally finite variation \cite[Prop.~2.19]{Kallenberg_2002}. This is why in the sequel we will mainly discuss right-continuous functions. If we ignore the value of $f$ at its jump points, then we may define the right-continuous function $f_r(x) = \lim_{h \downarrow 0} f(x+h)$, and take this function as a representative of $f$.

\begin{lemma}
\label{the:RightContinuity}
Assume that $\Leb\{t \in [0,T]: x_t \in J_f\} = 0$ where $J_f$ denotes the set of discontinuities of $f$. Then for any $\theta \in (0,1)$ and $p \in [1,\infty)$,\[
 [f \circ x]_{\theta,p}
 \weq  [f_r \circ x]_{\theta,p}.
\]
\end{lemma}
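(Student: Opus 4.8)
The plan is to exploit the fact that $f$ and $f_r$ differ only on the (countable) set $J_f$ of discontinuity points of $f$, hence $f\circ x$ and $f_r\circ x$ differ only on the time set $N = \{t\in[0,T]: x_t\in J_f\}$, which by hypothesis has Lebesgue measure zero. One then argues that altering a function on a Lebesgue-null set of times changes neither of the two iterated integrals defining the Gagliardo seminorm $[\,\cdot\,]_{\theta,p}$, since the integrands involved are bounded by a measurable function off the exceptional set of pairs $(s,t)$, and that exceptional set is null in $[0,T]^2$.

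First I would record that $J_f$ is countable (a monotone, hence locally finite variation, function has at most countably many jumps, and a general $f$ of locally finite variation is a difference of two such), so $N$ is measurable with $\Leb(N)=0$. Next, set $A = \{(s,t)\in[0,T]^2 : s\in N \text{ or } t\in N\} = (N\times[0,T])\cup([0,T]\times N)$; by Tonelli this has two-dimensional Lebesgue measure zero. On the complement $[0,T]^2\setminus A$ we have $x_s\notin J_f$ and $x_t\notin J_f$, so $f(x_s)=f_r(x_s)$ and $f(x_t)=f_r(x_t)$, whence
\[
 \frac{|f(x_t)-f(x_s)|^p}{|t-s|^{1+\theta p}}
 \weq \frac{|f_r(x_t)-f_r(x_s)|^p}{|t-s|^{1+\theta p}}
 \qquad \text{for all } (s,t)\in[0,T]^2\setminus A.
\]
Then I would integrate both sides over $[0,T]^2$: since the two nonnegative measurable integrands agree outside the null set $A$, their double integrals coincide (finite or $+\infty$ simultaneously), and taking $p$-th roots gives $[f\circ x]_{\theta,p} = [f_r\circ x]_{\theta,p}$. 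One small technical point to mention is measurability: $f$ and $f_r$, being of locally finite variation, are Borel, so $f\circ x$ and $f_r\circ x$ are measurable whenever $x$ is, which makes the integrands jointly measurable on $[0,T]^2$ and legitimizes the use of Tonelli's theorem.

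I do not expect any genuine obstacle here; the only thing requiring a moment's care is the bookkeeping that the bad set of pairs $(s,t)$ — not just the bad set of single times — is two-dimensionally null, which is immediate from $\Leb(N)=0$ and Tonelli. Everything else is the elementary observation that Lebesgue integrals ignore null sets.
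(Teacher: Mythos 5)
Your proposal is correct and follows essentially the same route as the paper: both arguments observe that the Gagliardo integrands for $f\circ x$ and $f_r\circ x$ agree off the set $\{(s,t): x_s\in J_f \text{ or } x_t\in J_f\}$, which has $\Leb^2$-measure zero by the hypothesis, so the double integrals coincide. Your added remarks on measurability and the countability of $J_f$ are harmless elaborations of the same one-line argument.
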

\begin{proof}
The claim follows by noting that 
\[
 \frac{|f(X_t) - f(X_s)|^p}{|t-s|^{1+\theta p}}
 \weq \frac{|f_r(X_t) - f_r(X_s)|^p}{|t-s|^{1+\theta p}}
\]
for all $(s,t)$ outside the set $\{(s,t): x_s \in J_f \ \text{or} \ x_t \in J_f\}$ which has $\Leb^2$-measure zero by assumption.
\end{proof}

Note that because $J_f$ is countable for any $f$ of locally finite variation, another sufficient condition for the above result is to assume that $\Leb\{t: x_t = y\} = 0$ for all $y$.

\begin{lemma}
\label{the:HolderSingular}
If $\theta > 0$ and $x: [0,T] \to \R$ is \Holder continuous of order $\alpha >0$, then
\[
 \int_0^t \frac{\ind{x_s < y}}{(t-s)^{1+\theta}} \, ds
 \wle \theta^{-1} [x]_{\alpha,\infty}^{\theta/\alpha } \,  |x_t-y|^{-\theta/\alpha }
\]
for all $t \in [0,T]$ and for all $y < x_t$.
\end{lemma}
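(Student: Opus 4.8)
The plan is to reduce the integral to one involving the level set of $x$ and then exploit H\"older continuity to bound the distance $|x_t - y|$ from below near the "hitting time" of the level $y$. Fix $t \in [0,T]$ and $y < x_t$, and let $\eta = x_t - y > 0$. The key observation is that the integrand $\ind{x_s < y}$ is supported on the set of times $s$ where $x_s < y$, and since $x$ is H\"older continuous of order $\alpha$, for such $s$ we must have $|x_t - x_s| \ge x_t - y = \eta$, hence $|t - s|^\alpha \ge \eta / [x]_{\alpha,\infty}$, i.e. $t - s \ge \bigl(\eta / [x]_{\alpha,\infty}\bigr)^{1/\alpha} =: \rho$. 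Therefore the integrand vanishes on $(t-\rho, t]$, and
\[
 \int_0^t \frac{\ind{x_s < y}}{(t-s)^{1+\theta}} \, ds
 \wle \int_0^{t-\rho} \frac{ds}{(t-s)^{1+\theta}}
 \wle \int_{\rho}^{\infty} \frac{du}{u^{1+\theta}}
 \weq \frac{1}{\theta} \rho^{-\theta}
 \weq \theta^{-1} [x]_{\alpha,\infty}^{\theta/\alpha} \, |x_t - y|^{-\theta/\alpha},
\]
where in the second inequality I substituted $u = t-s$ and enlarged the domain to $[\rho,\infty)$, which is legitimate since the integrand is positive. This is exactly the claimed bound.

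The only point requiring minor care is the degenerate case $\rho \ge t$ (equivalently $t - \rho \le 0$), in which the set $\{s \in [0,t] : x_s < y\}$ is empty and the left-hand side is $0$, so the inequality holds trivially; I would dispatch this in one sentence. I should also note that $[x]_{\alpha,\infty}$ is finite by the H\"older continuity assumption, so $\rho$ is well defined and positive. There is no real obstacle here — the argument is a one-line consequence of the H\"older bound forcing $x$ to stay away from any level $y < x_t$ for a definite amount of time before $t$; the estimate is completely elementary once this geometric picture is in place. The main thing to get right is the direction of the inequality $|t-s|^\alpha \ge \eta/[x]_{\alpha,\infty}$ and tracking the exponents $\theta/\alpha$ correctly through the substitution.
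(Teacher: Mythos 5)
Your proof is correct and follows essentially the same approach as the paper: both arguments use H\"older continuity to show the integrand vanishes on an interval of length at least $\bigl(|x_t-y|/[x]_{\alpha,\infty}\bigr)^{1/\alpha}$ before $t$ (the paper phrases this via the last time $s^*$ at which $x$ dips below $y$), and then integrate the singular kernel over the remaining region. The edge cases you flag (empty level set, $\rho \ge t$) are handled the same way in the paper.
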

\begin{proof}
Fix $t \in [0,T]$ and $y < x_t$, and denote by $I$ the integral above. If $x_s \ge y$ for all $s \in [0,t]$, then $I=0$ and there is nothing to prove. Let us consider the case where $x_s < y$ for at least one $s \in [0,t]$. Let $s^* = \sup\{s \in [0,t]: X_s < y\}$ be the supremum of the time indices for which this happens. Then $s^* \in [0,t)$ and $x_{s^*} = y$ by the continuity of $x$. In this case the integral is bounded from above by
\begin{equation}
 \label{eq:Integral2}
 I
 \wle \int_0^{s^*} \frac{ 1 }{(t-s)^{1+\theta}} \, ds
 \weq \frac{(t-s^*)^{-\theta } - t^{-\theta }}{\theta }
 \wle \theta ^{-1} (t-s^*)^{-\theta }.
\end{equation}
By the \Holder continuity of $x$, it follows that
\[
 | x_t - y |
 \weq | x_t - x_{s^*} |
 \wle [x]_{\alpha ,\infty} \, |t-s^*|^{\alpha },
\]
so that
\[
 I
 \wle \theta ^{-1} (t-s^*)^{-\theta }
 \wle \theta ^{-1} [x]_{\alpha ,\infty}^{\theta /\alpha } \, |x_t-y|^{-\theta /\alpha }.
\]
\end{proof}

For a right-continuous function $f$ of locally finite variation, denote its variation measure by $\mu_f$. This is the unique locally finite Borel measure on $\R$ such that the variation of $f$ over the interval $[a,b]$ equals $V_{(a,b)}(f) = \mu_f(a,b]$ for all $a < b$.
\begin{proposition}
\label{the:GagliardoNonrandom}
Let $f: \R \to \R$ be right-continuous and of finite variation. Let $x: [0,T] \to \R$ be \Holder continuous of order $\alpha > 0$.
\changedi{Then the Gagliardo seminorm of order $\theta  \in (0,1)$ and exponent $p \in [1,\infty)$ of the composite path $f \circ x$ is bounded by}
\begin{equation}
 \label{eq:GagliardoNonrandom}
  [f \circ x]_{\theta ,p}^p
  \wle \changedi{2^{p+1}} (\theta  p)^{-1} \mu_f(K_x)^{p-1} [x]_{\alpha ,\infty}^{\theta  p/\alpha } \, \int_0^T  \int_{K_x} |x_t - y|^{-\theta  p/\alpha } \, \mu_f(dy) \, dt,
\end{equation}
where $K_x$ is the closure of the range of $x$.
\end{proposition}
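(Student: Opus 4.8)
The plan is to linearize $f$ through its variation measure $\mu_f$, which turns the double Gagliardo integral into the one-sided singular integral already controlled by Lemma~\ref{the:HolderSingular}. First I would record the elementary estimate $|f(b)-f(a)| \wle V_{(a,b)}(f) = \mu_f((a,b])$ for $a<b$, and apply it with $a = x_s\wedge x_t$ and $b = x_s\vee x_t$. Since any half-open subinterval between two points of the range of $x$ lies in $K_x = [\min_{[0,T]} x,\, \max_{[0,T]} x]$, and since $|\ind{x_s<y}-\ind{x_t<y}| = \ind{x_s\wedge x_t < y \le x_s\vee x_t}$, this reads
\[
 |f(x_t)-f(x_s)| \wle \int_{K_x} |\ind{x_s<y}-\ind{x_t<y}| \, \mu_f(dy).
\]
If $\mu_f(K_x)=0$ then $f$ is constant on $K_x$, so $[f\circ x]_{\theta,p}=0$ and there is nothing to prove; otherwise H\"older's inequality against $\mu_f$ restricted to $K_x$, together with $|\ind{x_s<y}-\ind{x_t<y}|^p = |\ind{x_s<y}-\ind{x_t<y}|$, upgrades this to
\[
 |f(x_t)-f(x_s)|^p \wle \mu_f(K_x)^{p-1} \int_{K_x} |\ind{x_s<y}-\ind{x_t<y}| \, \mu_f(dy).
\]

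Next I would insert this into the definition \eqref{eq:Gagliardo} of $[f\circ x]_{\theta,p}^p$, use Tonelli to move the $\mu_f(dy)$ integral outermost, and, by symmetry of the integrand in $(s,t)$, replace the $ds\,dt$ integral over $[0,T]^2$ by twice its restriction to the simplex $\{0 \le s < t \le T\}$. The whole proof then rests on the bound, valid for every fixed $t$ and $y$ under the convention $r^{-\theta p/\alpha} = \infty$ at $r=0$,
\[
 \int_0^t \frac{|\ind{x_s<y}-\ind{x_t<y}|}{(t-s)^{1+\theta p}} \, ds \wle (\theta p)^{-1} [x]_{\alpha,\infty}^{\theta p/\alpha} \, |x_t - y|^{-\theta p/\alpha}.
\]
For a fixed $t$ only one of two regimes contributes: when $y < x_t$ the integrand equals $\ind{x_s<y}/(t-s)^{1+\theta p}$ and this is precisely Lemma~\ref{the:HolderSingular} with $\theta p$ in place of $\theta$; when $y > x_t$ the integrand equals $\ind{x_s\ge y}/(t-s)^{1+\theta p}$, and rerunning the proof of Lemma~\ref{the:HolderSingular} with $s^* = \sup\{s\in[0,t] : x_s\ge y\}$ --- for which $x_{s^*} = y$ by continuity of $x$ and $s^* < t$ because $x_t < y$ --- yields the identical bound; and when $y = x_t$ the right-hand side is $+\infty$, so nothing needs checking.

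Finally I would reinsert this bound, exchange the $t$ and $y$ integrals once more by Tonelli, and use $2 \le 2^p$ to arrive at \eqref{eq:GagliardoNonrandom}. The only slightly delicate points are the set $\{(t,y) : x_t = y\}$, on which the inner integral may genuinely diverge --- but there $|x_t-y|^{-\theta p/\alpha} = \infty$ as well, so the asserted inequality is never violated --- and the need for the evident mirror of Lemma~\ref{the:HolderSingular} (with ``$x_s < y$'', ``$y < x_t$'' replaced by ``$x_s \ge y$'', ``$y > x_t$''), whose proof is word for word the same. Everything else --- the variation estimate, H\"older's inequality, the Tonelli swaps, and the bookkeeping of constants --- is routine, and I foresee no genuine obstacle.
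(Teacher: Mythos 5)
Your proof is correct and follows essentially the same route as the paper's: increments of $f$ are linearized through the variation measure $\mu_f$, H\"older's inequality extracts the factor $\mu_f(K_x)^{p-1}$, and the resulting one-sided singular integrals are controlled by Lemma~\ref{the:HolderSingular} and its mirror. Your version is in fact slightly cleaner, since the direct estimate $|f(b)-f(a)|\le\mu_f((a,b])$ lets you skip the paper's Jordan decomposition into $f^+-f^-$ (which is where the $2^{p-1}$ in $2^p$ is spent), and working with the indicator $\ind{x_s\wedge x_t<y\le x_s\vee x_t}$ pointwise, with the convention $0^{-\theta p/\alpha}=\infty$ on the diagonal $y=x_t$, dispenses with the paper's discussion of the null set where $x_s=y$ or $x_t=y$.
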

\begin{proof}
(i) Let us first assume that $f$ is increasing and right-continuous. In this case the variation measure $\mu_f$ equals the Lebesgue--Stieltjes measure of $f$ and satisfies $\mu(x_1,x_2] = f(x_2)-f(x_1)$ for $x_1 < x_2$. Let $G_x = \{(t,x_t): t \in [0,T]\}$ be the graph of $x$. We may and will assume that $(\Leb \otimes \mu_f)(G_x) = 0$ because otherwise the right side of~\eqref{eq:GagliardoNonrandom} is infinite and there is nothing to prove.

Observe next that for all real numbers $x_1$ and $x_2$ in $K=K_x$,
\[
 f(x_2)-f(x_1)
 \weq
 \begin{cases}
   \int_{K} 1_{(x_1,x_2]}(y)   \, \mu_f(dy), &\quad x_1 \le x_2,\\
   - \int_{K} 1_{(x_2,x_1]}(y) \, \mu_f(dy), &\quad x_1 > x_2,
 \end{cases}
\]
so that
\[
 |f(x_t)-f(x_s)|
 \weq \int_{K} 1_{(x_s,x_t]}(y) \, \mu_f(dy) + \int_{K} 1_{(x_t,x_s]}(y) \, \mu_f(dy).
\]
\changedi{Because $|f(x_t)-f(x_s)|^{p-1} \le \mu(K)^{p-1}$, it hence follows that}
\[
 \abs{f(x_t)-f(x_s)}^p
 \wle \mu(K)^{p-1} \left\{ \int_{K} 1_{(x_s,x_t]}(y) \, \mu_f(dy) + \int_{K} 1_{(x_t,x_s]}(y) \, \mu_f(dy) \right\},
\]
and the Gagliardo seminorm of $f \circ X$ can be bounded by
\[
 [f \circ X]_{\theta,p}^p
 \wle 2 \mu(K)^{p-1} \int_0^T \int_0^t \int_{K} \frac{1_{(x_s,x_t]}(y) + 1_{(x_t,x_s]}(y)}{(t-s)^{1+\theta  p}} \, ds dt \mu_f(dy).
\]
Because $\Leb \otimes \mu_f(G_x) = 0$, it follows that the set
\[
 \tilde A
 \weq \left\{(s,t,y) \in [0,T]^2 \times K: \ x_s = y \ \text{or} \ x_t = y \right\}
\]
has $\Leb^2 \otimes \mu_f$ -measure zero. Because 
\[
 \frac{1_{(x_s,x_t]}(y) + 1_{(x_t,x_s]}(y)}{(t-s)^{1+\theta  p}}
 \weq  \frac{1_{(x_s,x_t)}(y) + 1_{(x_t,x_s)}(y)}{(t-s)^{1+\theta  p}}
\]
for all $(s,t,y)$ outside the set $\tilde A$, we conclude that 
\begin{equation}
 \label{eq:GagliardoSplit}
 [f \circ X]_{\theta,p}^p
 \wle 2 \mu(K)^{p-1} \left\{ \int_{K} J_1(y) \mu_f(dy) + \int_{K} J_2(y) \mu_f(dy) \right\},
\end{equation}
where
\[
 J_1(y) \weq \int_0^T \int_0^t \frac{1_{(x_s,x_t)}(y)}{(t-s)^{1+\theta  p}} \, ds dt
 \quad\text{and}\quad
 J_2(y) \weq \int_0^T \int_0^t \frac{1_{(x_t,x_s)}(y)}{(t-s)^{1+\theta  p}} \, ds dt.
\]

To estimate the above integrals, let us first rewrite $J_1(y)$ as an integral
$
 J_1(y) \weq \int_0^T J_{1,t}(y) \, 1(x_t > y) \, dt,
$
where
\[
 J_{1,t}(y)
 \weq \int_0^t \frac{\ind{x_s < y}}{(t-s)^{1+\theta  p}} \, ds.
\]
By Lemma~\ref{the:HolderSingular},
\[
 J_{1,t}(y)
 \wle (\theta  p)^{-1} [x]_{\alpha ,\infty}^{\theta  p/\alpha } |x_t-y|^{-\theta  p/\alpha }
\]
for all $t$ such that $x_t > y$. Hence
\[
  J_1(y)
  \wle (\theta  p)^{-1} [x]_{\alpha ,\infty}^{\theta  p/\alpha } \, \int_0^T  |x_t - y|^{-\theta  p/\alpha } \, dt.
\]

By applying the same argument to the function $t \mapsto -x_t$ at the point $-y$, we may also conclude that
\[
 J_{2,t}(y)
 \, := \ \int_0^t \frac{\ind{x_s > y} }{(t-s)^{1+\theta  p}} \, ds
 \wle (\theta  p)^{-1} [x]_{\alpha ,\infty}^{\theta  p/\alpha } |x_t-y|^{-\theta  p/\alpha }
\]
for all $t$ such that $x_t < y$. Therefore
\[
  J_2(y)
  \weq \int_0^T J_{2,t}(y) 1(x_t < y) \, dt
  \wle (\theta  p)^{-1} [x]_{\alpha ,\infty}^{\theta  p/\alpha } \, \int_0^T  |x_t - y|^{-\theta  p/\alpha } \, dt.
\]
By combining these inequalities with~\eqref{eq:GagliardoSplit}, we obtain~\eqref{eq:GagliardoNonrandom} \changedi{with the constant $2^{p+1}$ on the right side replaced by 4}.

(ii) Let us now consider the general case with a right-continuous function $f$ of finite variation. Then we can represent $f$ as
\[
 f(t) = f(0) + f^+(t) - f^-(t),
\]
where $f^+$ and and $f^-$ are the positive and negative variation functions of $f$ defined by
\[
 f^\pm(t) \weq
 \begin{cases}
   V^\pm(0,t), &\quad t \ge 0, \\
   -V^\pm(t,0), &\quad t < 0.
 \end{cases}
\]
The functions $f^+$ and $f^-$ are increasing, bounded, and right-continuous \cite[Prop.~2.19]{Kallenberg_2002}, and the variation measure of $f$ can be represented as $\mu_f = \mu_{f^+} + \mu_{f^-}$.
Because
\[
 [f \circ x]_{\theta ,p}^p
 \wle 2^{p-1} \left( [f^+ \circ x]_{\theta,p}^p + [f^- \circ x]_{\theta ,p}^p \right),
\]
the claim follows by part (i).
\end{proof}

\begin{proof}[Proof of Proposition~\ref{the:GagliardoComposite}]
Because $f$ is of locally finite variation, its set of discontinuities $J_f$ is countable. Because $X_t$ admits a probability density function for almost every $t \in [0,T]$, it follows that
\[
 \E \Leb\{t:X_t \in J_f\}
 \weq \E \int_0^T 1_{J_f}(X_t) \, dt
 \weq \int_0^T \pr(X_t \in J_f) \, dt
 \weq 0,
\]
so that the set $\{t \in [0,T]: X_t \in J_f\}$ has Lebesgue measure zero with probability one. Hence by Lemma~\ref{the:RightContinuity}, $[f \circ X]_{\theta ,p} = [f_r \circ X]_{\theta ,p}$ almost surely, where $f_r$ denotes the right-continuous \changedi{modification} of $f$. Hence in what follows, we may and will assume that $f$ is right-continuous.

Proposition~\ref{the:GagliardoNonrandom} implies that $[f \circ X]_{\theta,p}^p \le \changedi{2^{p+1}} (\theta  p)^{-1} [X]_{\alpha ,\infty}^{\theta  p/\alpha } \, Q$, where
\[
 Q
 \weq \mu_f(K_X)^{p-1} \int_0^T  \int_{K_X} |X_t - y|^{-\theta  p/\alpha } \, \mu_f(dy) \, dt,
\]
and $K_X$ is the closure of the range of $X$.
Because the path of $X$ is $\alpha $-\Holder continuous almost surely, it suffices to verify that $Q < \infty$ almost surely. This is what we shall do next.

On the event $A_k = \{||X||_\infty \le k\}$, the random variable $Q$ is bounded from above by
\[
 Q_k
 \weq \mu_f([-k,k])^{p-1} \int_0^T  \int_{\left[-k, k \right]} |X_t - y|^{-\theta  p/\alpha } \, \mu_f(dy) \, dt.
\]
\changedi{
Now by using Fubini's theorem, we find that
\begin{align*}
 \E Q_k
 &\weq \mu_f([-k,k])^{p-1} \int_{\left[-k, k \right]} \int_0^T \E |X_t - y|^{-\theta  p/\alpha } \, dt \, \mu_f(dy) \\
 &\wle \mu_f([-k,k])^p \, \sup_{y \in \R} \int_0^T \E |X_t - y|^{-\theta  p/\alpha } \, dt,
\end{align*}
where $\mu_f([-k,k]) = V_f(-k,k)$ is the variation of $f$ on $[-k,k]$.}
We know (Lemma~\ref{the:Tikanmaki}) that
\[
 \E |X_t - y|^{-\theta  p/\alpha }
 \wle 1 + \frac{2}{1-\theta  p/\alpha } \sup_x p_t(x)
 \wle 1 + \frac{2}{1-\theta  p/\alpha } \hat p_t
\]
for all $y \in \R$ and for almost every $t$. By integrating the left and the right side of the above inequality over $t$, we find that
\[
 \E Q_k
 \wle V_f(-k,k)^p \left( T + \frac{2}{1-\theta p /\alpha } \int_0^T \hat p_t \, dt \right).
\]
Because $f$ has locally finite variation, it follows that $\E Q_k < \infty$, and hence $Q_k$ is finite almost surely.

We may now conclude that for all $k \ge 1$,
\[
 \pr(Q=\infty, A_k)
 \wle \pr(Q_k = \infty, A_k)
 \wle \pr(Q_k = \infty)
 \weq 0.
\]
The almost sure continuity of $X$ implies that $||X||_\infty$ is finite almost surely, and therefore $\pr(\cup_k A_k) = 1$. Hence
\[
 \pr(Q=\infty)
 \weq \pr(Q=\infty, \cup_k A_k)
 \wle \sum_k \pr(Q=\infty, A_k)
 \weq 0.
\]
\end{proof}

\subsection{Proof of Theorem~\ref{the:ZSIntegral}}

\begin{proof}[Proof of Theorem~\ref{the:ZSIntegral}]
Choose some $\theta  \in (1-\beta,\alpha)$. By Proposition~\ref{the:ZSIntegralBound}, it suffices to show that $f \circ X \in W^\theta _1(0+)$ and 
$Y \in W^{1-\theta }_\infty(T-)$ with probability one. Observe first that $f \circ X(0+)$ exists almost surely by assumption. Moreover, by Lemma~\ref{the:HolderGagliardo}, 
\[
 ||f \circ X||_{W^\theta _1(0+)}
 \wle c (||f \circ X||_\infty + [f \circ X]_{\theta ,1})
\]
for $c = (1-\theta )^{-1} T^{1-\theta } + \frac{1}{2}$. The term $||f \circ X||_\infty$ is finite almost surely, because $X$ is continuous and $f$ is locally bounded. The Gagliardo seminorm $[f \circ X]_{\theta ,1}$ is almost surely finite by Proposition~\ref{the:GagliardoComposite}. Hence $f \circ X \in W^\theta _1(0+)$ almost surely.

Because $Y$ is almost surely \Holder continuous of order $\beta > 1-\theta $, we see that $||Y||_{W^{1-\theta }_\infty(T-)}$ is finite by Lemma~\ref{the:HolderGagliardo}. The \Holder continuity of $Y$ also implies that $Y(T-)$ exists almost surely. Therefore $Y \in W^{1-\theta }_\infty(T-)$ with probability one, and the claim now follows by Proposition~\ref{the:ZSIntegralBound}.
\end{proof}

\subsection{Proof of Theorem~\ref{the:Ito}}

Assume without loss of generality that $t=T$.
Denote by $\phi = f'$ the derivative of $f$. We define a smooth approximation $\phi_n$ of $\phi$ as in \eqref{eq:Mollification}. Define also a smooth approximation $f_n$ of $f$ by the same formula. Then $f_n$ is infinitely differentiable with derivative $f_n' = \phi_n$.
Because $\phi_n$ is Lipschitz continuous, we see that $\phi_n \circ X$ is \Holder continuous of order $\alpha > 1/2$, and it follows \cite[Theorem 4.3.1]{Zahle_1998} that
\begin{equation}
 \label{eq:ItoSmooth}
 f_n(X_T) - f_n(X_0)
 \weq \ZSint_0^T \phi_n(X_t) \, dX_t	
\end{equation}
almost surely.

We will next verify that \eqref{eq:ItoSmooth} remains valid in the limit as $n \to \infty$.
By Proposition~\ref{the:SmoothGagliardo}, $[\phi_n \circ X - \phi \circ X]_{\theta,1} \prto 0$.
Observe next that the definition of $\phi_n$ guarantees that for any $c>0$,
\[
 \sup_{|x|\le c} |\phi_n(x) - \phi(x)|
 \wle \sup_{|x|\le c+1} |\phi(x)| + \sup_{|x| \le c} |\phi(x)|
\]
for all $n$. Therefore,
\[
  t^{-\theta} |\phi_n(X_t) - \phi(X_t)|
 \wle 2 \, t^{-\theta} \sup_{|x| \le ||X||_\infty + 1} |\phi(x)|,
\]
where $||X||_\infty = \sup_{t \in [0,T]} |X_t|$ is almost surely finite. Because $\phi_n \to \phi$ pointwise and the right side above is almost surely integrable with respect to the Lebesgue measure on $[0,T]$, we conclude by dominated convergence that
\[
 \int_0^T \frac{|\phi_n(X_t) - \phi(X_t)|}{t^\theta} \, dt
 \ \to \ 0
\]
almost surely. Our conclusion is that $||\phi_n \circ X - \phi \circ X||_{W^\theta_1(0+)} \to 0$ almost surely. Hence by Proposition~\ref{the:ZSIntegralBound}, the right side of~\eqref{eq:ItoSmooth} converges according to
\[
 \ZSint_0^T \phi_n(X_t) \, dX_t
 \ \prto \ \ZSint_0^T \phi(X_t) \, dX_t.
\]
On the other hand, the left side of~\eqref{eq:ItoSmooth} converges almost surely to $f(X_T) - f(X_0)$. Because the limits must agree with probability one, we conclude that
\[
 f(X_T) - f(X_0)
 \weq \ZSint_0^T \phi(X_t) \, dX_t	
\]
almost surely, and the proof of Theorem~\ref{the:Ito} is complete.

\subsection{Riemann--Stieltjes integrability}

In this section we will prove Theorem~\ref{the:RSIntegral} and Theorem~\ref{the:RateMean}. Let $X$ and $Y$ be \Holder continuous random processes on $[0,T]$ and $f$ a function of locally finite variation fulfilling the assumptions of  Theorem~\ref{the:RSIntegral}. Fix a tagged partition $\pi = (t_0,t_1,\dots, t_k; \xi_1,\dots,\xi_k)$ such that $0=t_0 < t_1 < \cdots < t_k = T$ and $\xi_i \in [t_{i-1},t_i]$ for all $i$. Because $Y$ is continuous on $[0,T]$, we see that (cf.\ \cite[Proposition 2.2]{Zahle_1998})
\[
 Y_{t_i} - Y_{t_{i-1}}
 \weq \ZSint_0^T 1_{(t_{i-1},t_i]}(t) \, dY_t
\]
for all $i$. Hence the Riemann--Stieltjes sum of $f \circ X$ against $Y$ along the tagged partition $\pi$ can be written as
\[
 \RSsum(f\circ X, Y, \pi)
 \weq \ZSint_0^T \sum_{i=1}^k f(X_{\xi_i}) 1_{(t_{i-1},t_i]}(t) \, dY_t
 \weq \ZSint_0^T f(\hat X_\pi(t)) \, dY_t,
\]
where a $\pi$-interpolated version of $X$ is defined by
\[
 \hat X_\pi(t)
 \weq \sum_{i=1}^k X_{\xi_i} 1_{(t_{i-1},t_i]}(t).
\]
By Proposition~\ref{the:ZSIntegralBound}, we find that the approximation error
\begin{equation}
 \label{eq:Delta}
 \Delta(\pi)
 \weq \RSsum(f\circ X, Y, \pi) - \ZSint_0^T f(X(t)) \, dY(t) 
\end{equation}
is bounded by
\begin{equation}
 \label{eq:DeltaBound}
\begin{aligned}
 \left| \Delta(\pi) \right|
 \weq \left| \ZSint_0^T (f(\hat X_\pi(t)) - f(X_t)) \, dY_t \right|
 \wle \frac{|| h_\pi ||_{W^\theta_1(0+)} || Y ||_{W^{1-\theta}_\infty(T-)}}{\Gamma(\theta) \Gamma(1-\theta)},
\end{aligned}
\end{equation}
where
\[
 h_\pi(t) 
 \weq f \circ \hat X_\pi(t) - f \circ X(t)
\]
denotes the interpolation error at $t$ \changedi{(see below for the choice of $\theta$).}

\begin{proof}[Proof of Theorem~\ref{the:RSIntegral}]
Suppose that the assumptions of Theorem \ref{the:RSIntegral} prevail with \changedii{$0 < 1-\beta < H-1/r$, and assume that $0 < a < \frac{r}{1+r}H - (1-\beta)$. Choose a number $\theta \in (0,1)$ so that $1-\beta < \theta < H-1/r$ and $\theta < \frac{r}{1+r}H - a$.
}
Observe that by Lemma~\ref{the:HolderGagliardo},
\[
 ||Y||_{W^{1-\theta}_\infty(T-)}
 \wle c_1 [Y]_{\beta,\infty},
\]
where $c_1=(1+\epsilon_1^{-1})T^{\epsilon_1}$ with $\epsilon_1 = \beta - (1-\theta)$, so that by \eqref{eq:DeltaBound}, the error term in \eqref{eq:Delta} is bounded by
\[
 |\Delta(\pi)|
 \wle c_2 ||h_\pi||_{W^\theta_1(0+)} [Y]_{\beta,\infty}
\]
for $c_2 = c_1 \Gamma(\theta)^{-1} \Gamma(1-\theta)^{-1}$.

(i) Assume first that $f$ is right-continuous and has finite variation. Now by Lemma~\ref{the:Key}, $\E || h_\pi ||_{W^\theta_1(0+)} \le c d_1(f) ||\pi||^\rho$ for $\rho = \frac{r}{1+r}H - \theta > 0$. Hence $|| h_\pi ||_{W^\theta_1(0+)} \prto 0$ as $||\pi|| \to 0$. Because $[Y]_{\beta,\infty}$ is finite almost surely, we conclude that $|\Delta(\pi)| \prto 0$ as $||\pi|| \to 0$.

(ii) Assume next that $f$ is right-continuous with locally, but not globally, finite variation. Let $A_k = \{||X||_\infty \le k\}$. On this event we may replace $f$ by its truncated version $f^{(k)}$ which has finite variation. On this event $h_\pi = h^{(k)}_\pi$ where the latter random function corresponds to replacing $f$ by $f^{(k)}$. By the first part, $|| h^{(k)}_\pi ||_{W^\theta_1(0+)} \prto 0$. Because $h^{(k)}_\pi = h_\pi$ on the event $A_k$, we conclude that $1_{A_k} || h^{(k)}_\pi ||_{W^\theta_1(0+)} \prto 0$ for all $k \ge 1$. Because $\pr(\cup_k A_k) = \pr( ||X||_\infty < \infty) = 1$, we conclude (Lemma~\ref{the:ConvergenceInProbability}) that $|| h_\pi ||_{W^\theta_1(0+)} \prto 0$ also in this case.

(iii) Assume now that
\[
 \sum_{n=1}^\infty ||\pi_n||^a < \infty.
\]
\removedii{for some $a < \frac{\lambda}{1+\delta} + \beta - 1$. We may and will assume that this $a$ is the same $a$ that was fixed in the beginning of the proof.} Then for any right-continuous function of finite variation, by applying Lemma~\ref{the:Key} we see that
\[
 \E \bigg( \sum_{n \ge 1} ||h_{\pi_n}||_{W^\theta_1(0+)} \bigg)
 \wle \sum_{n \ge 1} c ||\pi_n||^{\changedii{\frac{r}{1+r}H - \theta} }
 \wle c \sum_{n \ge 1} ||\pi_n||^a
 < \infty.
\]
Hence $||h_{\pi_n}||_{W^\theta_1(0+)} \to 0$ almost surely, and \changedi{hence also $\abs{\Delta(\pi_n)} \to 0$} almost surely. Making the same conclusion for any right-continuous $f$ of locally finite variation can be done as in (ii).
\end{proof}

\changedii{
\begin{proof}[Proof of Theorem~\ref{the:RateDyadic}]
Suppose that $0 < 1-\beta < H-1/r$ and $\epsilon > 0$, and denote $a = \frac{r}{1+r} H - (1-\beta) - \epsilon$. Choose a number $\theta \in (0,1)$ so that $1-\beta < \theta < H-1/r$ and $\theta < \frac{r}{1+r}H - a$. By using the same notations and reasoning as in the proof of Theorem~\ref{the:RSIntegral}, we find that the absolute value of the error term $\Delta(\pi_n)$ is almost surely bounded by a constant multiple of $||h_{\pi_n}||_{W^\theta_1(0+)} [Y]_{\beta,\infty}$, and
\begin{align*}
 \E \bigg( \sum_{n \ge 1} 2^{a n} ||h_{\pi_n}||_{W^\theta_1(0+)} \bigg)
 &\wle c \sum_{n \ge 1} 2^{a n}  ||\pi_n||^{\frac{r}{1+r}H - \theta} \\
 &\weq c T^{\frac{r}{1+r}H - \theta} \sum_{n \ge 1} 2^{-(\frac{r}{1+r}H - \theta - a)n}
 < \infty.
\end{align*}
Hence the sequence $n \mapsto 2^{a n} ||h_{\pi_n}||_{W^\theta_1(0+)}$ is bounded almost surely by a finite (random) constant, and the conclusion follows.
\end{proof}
}

\begin{proof}[Proof of Theorem~\ref{the:RateMean}]
Suppose that the assumptions of Theorem \ref{the:RSIntegral} prevail with \changedii{$0 < 1-\beta < H-1/r$, and assume that $0 < a < \frac{r}{1+r}H - (1-\beta)$. Choose a number $\theta \in (0,1)$ so that $1-\beta < \theta < H-1/r$ and $\theta < \frac{r}{1+r}H - a$.
}
Let us next choose a small enough $p \in (1,2)$ so that
\[
 p \wle \changedii{\min\left( r, \ 1+\frac{r}{1+r}H, \ \frac{\frac{r}{1+r}H - \theta + 1}{a+1} \right)}.
\]
Observe that by Lemma~\ref{the:HolderGagliardo},
\[
 ||Y||_{W^{1-\theta}_\infty(T-)}
 \wle c_1 [Y]_{\beta,\infty},
\]
where $c_1=(1+\epsilon_1^{-1})T^{\epsilon_1}$ with $\epsilon_1 = \beta - (1-\theta)$, so that by \eqref{eq:DeltaBound}, the Riemann--Stieltjes integration error is bounded by
\[
 |\Delta(\pi)|
 \wle c_2 ||h_\pi||_{W^\theta_1(0+)} [Y]_{\beta,\infty}
\]
for $c_2 = c_1 \Gamma(\theta)^{-1} \Gamma(1-\theta)^{-1}$. If $f$ has finite total variation, then we find by Lemma~\ref{the:Key} and our choice of $p$ that (for $||\pi|| \le 1$)
\begin{equation}
 \label{eq:LpBound}
 \left( \E ||h_\pi||^p_{W^\theta_1(0+)} \right)^{1/p}
 \wle c_3 d_1(f)^{1/p} ||\pi||^{\changedii{(1+\frac{r}{1+r}H - \theta - p)/p}}
 \wle c_3 d_1(f)^{1/p} ||\pi||^a,
\end{equation}
so that for $1/q = 1-1/p$,
\[
 \E |\Delta(\pi)|
 \wle c_2 \left( \E ||h_\pi||^p_{W^\theta_1(0+)} \right)^{1/p} \left(\E [Y]^q_{\beta,\infty} \right)^{1/q}
 \wle c_2 c_3 d_1(f)^{1/p} \left(\E [Y]^q_{\beta,\infty} \right)^{1/q} ||\pi||^a,
\]
where
\[
 d_1(f) \weq
 \begin{cases}
 V(f)^p, &\quad \epsilon = 0, \\
 V(f)^p + \big(\sup_{|x| < \epsilon} |f'(x)| \big)^p, &\quad \epsilon > 0.
 \end{cases}
\]
This proves the claim under the assumption that $f$ has finite total variation.

Assume next that the total variation of $f$ is infinite. Denote by $A_k$ the event that $k-1 < ||X||_\infty \le k$. On this event, we see that
\[
 h_\pi
 \weq f \circ \hat X_\pi  - f \circ X
 \weq f^{(k)} \circ \hat X_\pi - f^{(k)} \circ X
 \ =: \ h^{(k)}_\pi,
\]
where $f^{(k)}$ is the truncated version of $f$ defined by
\[
 f^{(k)}(x)
 \weq 
 \begin{cases}
   f(-k), &\quad x < -k, \\
   f(x), &\quad |x| \le k, \\
   f(k), &\quad x > k.
 \end{cases}
\]
By the same argument as above,
\[
 |\Delta(\pi)| 1_{A_k}
 \wle c_2 ||h^{(k)}_\pi||_{W^\theta_1(0+)} [Y]_{\beta,\infty} 1_{A_k}.
\]
Then we may choose a large enough $q \in (1,\infty)$ so that $1/p + 2/q = 1$. Then by \Holder's inequality for three random variables,
\[
 \E ( |\Delta(\pi)| 1_{A_k} )
 \wle c_2 \left( \E ||h_\pi^{(k)}||_{W^{\theta}_1(0+)}^{p} \right)^{1/p} \left( \E ||Y||_{\beta,\infty}^{q} \right)^{1/q} \pr(A_k)^{1/q}.
\]
By applying Lemma~\ref{the:Key} again, we see that \eqref{eq:LpBound} is valid with $f$ replaced by $f^{(k)}$.
Therefore,
\[
 \E |\Delta(\pi)|
 \weq \E \sum_{k = 0}^\infty |\Delta(\pi)| 1_{A_k}
 \wle c_2 c_3 \left( \E ||Y||_{\beta,\infty}^{q} \right)^{1/q} \left( \sum_{k = 1}^\infty d_1(f^{(k)})^{1/p} \pr(A_k)^{1/q} \right) ||\pi||^a.
\]

Now because $f^{(k)}(x) = f(x)$ for $|x| \le k$,
\[
 d_1(f^{(k)}) \weq
 \begin{cases}
 V(f^{(k)})^p, &\quad \epsilon = 0, \\
 V(f^{(k)})^p + \big(\sup_{|x| < \epsilon} |f'(x)| \big)^p, &\quad \epsilon > 0.
 \end{cases}
\]
Because $f$ has infinite total variation, $V(f^{(k)}) = V_{[-k,k]}(f) \to \infty$, and it follows that
\[
 d_1(f^{(k)})^{1/p} \wle 2^{1/p} V(f^{(k)})
 \weq 2^{1/p} V_{[-k,k]}(f)
\]
for all large values of $k$. Because $\pr(A_k) \le \pr( ||X||_\infty > k-1)$, we find that
\[
 \left( \sum_{k = 1}^\infty d_1(f^{(k)})^{1/p} \pr(A_k)^{1/q} \right) < \infty,
\]
which confirms the validity of the claim.
\end{proof}

\begin{lemma}
\label{the:Key}
Let $f$ be a right-continuous function of finite variation. Let $X: [0,T] \to \R$ be a random process such that $X_t$ admits a probability density function $p_t$ \changedi{at almost every $t \in (0,T)$}, which is bounded according to Assumption~\ref{ass:Density}, and that
\[
 \changedi{c_0}
 \weq \sup_{0 \le s < t \le T} \changedii{\frac{\left(\E \abs{X_t - X_s}^{r}\right)^{1/r}}{|t-s|^{H}}} < \infty
\]
for some \changedii{$H \in (0,1]$ and $r > 1/H$}.
Assume also that there exist $\epsilon \ge 0$ and $\changedi{c_1} < \infty$ such that
\begin{itemize}
\item $p_t(x) \le c_1$ for all $|x| \ge \epsilon/2$ and \changedi{almost} all $t \in (0,T)$, and
\item $f$ restricted to $(-\epsilon,\epsilon)$ is Lipschitz continuous.
\end{itemize}
Then for any $1 \le p \le \changedii{r}$ and $0 < \theta < \min(2-p, \, \changedii{1+\frac{r}{1+r}H - p})$, there exists a finite constant $c$ such that the interpolation error $h_\pi = f \circ \hat X_\pi - f \circ X$ satisfies
\[
 \E  || h_\pi ||^p_{W^\theta_1(0+)}
 \wle c d_1(f) ||\pi||^{\changedii{1+\frac{r}{1+r}H - p - \theta}} 
\]
for any tagged partition $\pi$ \changedi{of mesh size $||\pi|| \le c_0^{\changedii{-1/H}}$}, where
\[
 d_1(f) \weq
 \begin{cases}
 V(f)^p, &\quad \epsilon = 0, \\
 V(f)^p + \big(\sup_{|x| < \epsilon} |f'(x)| \big)^p, &\quad \epsilon > 0.
 \end{cases}
\]
\end{lemma}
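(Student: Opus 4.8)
The plan is to reduce everything to a single quantitative \emph{two‑point estimate} for $f\circ X$, after which the bound on $\E\|h_\pi\|^p_{W^\theta_1(0+)}$ follows by treating the two terms of the $W^\theta_1(0+)$‑norm separately and, inside the Gagliardo term, by a decomposition of the time domain governed by the mesh $\|\pi\|$. I would assume $\|\pi\|\le 1$ (otherwise the asserted inequality is trivial after enlarging $c$) and, via the decomposition $f=f(0)+f^+-f^-$ into right‑continuous increasing pieces and the truncation device from the proof of \proref{GagliardoComposite}, reduce to $f$ right‑continuous with signed Lebesgue--Stieltjes measure $\nu_f$, variation measure $\mu_f=|\nu_f|$ and $\mu_f(\R)=V(f)$; when $\epsilon>0$ I would also split $\mu_f=\mu_f|_{(-\epsilon,\epsilon)}+\mu_f|_{\{|y|\ge\epsilon\}}$, using that $\mu_f|_{(-\epsilon,\epsilon)}$ has Lebesgue density $\le L:=\sup_{|x|<\epsilon}|f'(x)|$. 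The one fact used throughout is the pointwise identity $h_\pi(u)=\int_\R(\ind{y\le X_{\xi(u)}}-\ind{y\le X_u})\,\nu_f(dy)$, where $\xi(u)$ is the tag of the partition cell containing $u$.

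\textbf{Two‑point estimate.} First I would prove that $\E\,\mu_f(\text{interval between }X_a,X_b)\wle c\,(V(f)+L)\,|a-b|^{\lambda/(1+\delta)}$ for all $a,b\in[0,T]$, whence $\E|f(X_a)-f(X_b)|^p\wle c\,d_1(f)\,|a-b|^{\lambda/(1+\delta)}$ by Jensen (using $0\le\mu_f(\cdot)\le V(f)$). The core is a level‑wise bound: for $|y|\ge\epsilon$, conditioning on $\{|X_a-X_b|\le\eta\}$ or not,
\[
 \pr(y\text{ lies between }X_a,X_b)\ \wle\ \pr\big(|X_a-y|\le\eta,\ |X_a|\ge\epsilon/2\big)+\pr\big(|X_a-X_b|>\eta\big)\ \wle\ 2c_0\eta+c_2\,\eta^{-\delta}|a-b|^{\lambda}
\]
by the density bound and Markov's inequality with Assumption~\ref{ass:MeanContinuity}; optimizing at $\eta\asymp|a-b|^{\lambda/(1+\delta)}$ (truncated at $\epsilon/4$, the remainder absorbed in the constant) gives $\pr(y\text{ between }X_a,X_b)\le c\,|a-b|^{\lambda/(1+\delta)}$. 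Integrating this against $\mu_f$ over $\{|y|\ge\epsilon\}$, and bounding $\E\,\mu_f\big((-\epsilon,\epsilon)\cap\text{interval}\big)\le L\,\E|X_a-X_b|\le cL\,|a-b|^{\lambda/\delta}\le cL\,|a-b|^{\lambda/(1+\delta)}$ (using $\lambda/\delta\ge\lambda/(1+\delta)$), finishes this step.

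\textbf{Transfer to $h_\pi$.} From the identity above and the elementary bound $\big|\ind{y\le X_{\xi(t)}}-\ind{y\le X_{\xi(s)}}-(\ind{y\le X_t}-\ind{y\le X_s})\big|\le\ind{y\text{ betw.\ }X_{\xi(s)},X_{\xi(t)}}+\ind{y\text{ betw.\ }X_s,X_t}$ we get $|h_\pi(t)-h_\pi(s)|\wle\mu_f(\text{int.\ betw.\ }X_{\xi(s)},X_{\xi(t)})+\mu_f(\text{int.\ betw.\ }X_s,X_t)$; both summands are $\le V(f)$, so Jensen and the two‑point estimate give $\E|h_\pi(t)-h_\pi(s)|^p\le c\,d_1(f)\big(|\xi(t)-\xi(s)|^{\lambda/(1+\delta)}+|t-s|^{\lambda/(1+\delta)}\big)$. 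Since $\xi(t)=\xi(s)$ when $s,t$ lie in one cell, while in general $|\xi(t)-\xi(s)|\le|t-s|+\ell_j+\ell_i$ with $\ell_j,\ell_i\le\|\pi\|$ the lengths of the cells of $s,t$, this yields $\E|h_\pi(t)-h_\pi(s)|^p\le c\,d_1(f)\,|t-s|^{\lambda/(1+\delta)}$ for same‑cell pairs and $\le c\,d_1(f)\big((|t-s|+\ell_j+\ell_i)^{\lambda/(1+\delta)}\wedge\|\pi\|^{\lambda/(1+\delta)}\big)$ otherwise; in particular $\E|h_\pi(t)|^p\le c\,d_1(f)\,\|\pi\|^{\lambda/(1+\delta)}$. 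The essential point here is that the bound for distinct cells comes from the \emph{telescoped} quantity $\mu_f(\text{int.\ betw.\ }X_{\xi(s)},X_{\xi(t)})$, not from a sum over the many individual jumps of the step function $f\circ\hat X_\pi$.

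\textbf{Assembling the norm and the main obstacle.} For the weighted $L^1$ term, Jensen against the finite measure $t^{-\theta}\,dt$ on $[0,T]$ gives $\E\big(\int_0^T t^{-\theta}|h_\pi(t)|\,dt\big)^p\le c\,d_1(f)\,\|\pi\|^{\lambda/(1+\delta)}\le c\,d_1(f)\,\|\pi\|^{1+\lambda/(1+\delta)-p-\theta}$, using $p\ge1$, $\theta<1$, $\|\pi\|\le1$. For $[h_\pi]_{\theta,1}$ I would split $\{0\le s<t\le T\}$ into the \emph{far} region $\{t-s>\|\pi\|\}$, the \emph{near--same} region and the \emph{near--different} region. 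On the far region $|h_\pi(t)-h_\pi(s)|\le|h_\pi(t)|+|h_\pi(s)|$ and $\int_0^{t-\|\pi\|}(t-s)^{-1-\theta}\,ds\le\theta^{-1}\|\pi\|^{-\theta}$, so Minkowski's integral inequality against this \emph{finite} kernel gives $c\,d_1(f)^{1/p}\,\|\pi\|^{\lambda/(p(1+\delta))-\theta}$, whose exponent is $\ge(1+\tfrac{\lambda}{1+\delta}-p-\theta)/p$ because $p\ge1$. On the near regions one must keep the two–dimensional structure of the double integral and localize to scale $\|\pi\|$: on the near--same region $|h_\pi(t)-h_\pi(s)|=|f(X_t)-f(X_s)|$ and the cell‑triangle geometry tames the corner $s=t$; on the near--different region one parametrizes by the first interior partition point $t_j$ and the cell of $t$, uses $\int\!\!\int_{\{0<u+v\le\|\pi\|,\ v\le\ell_j\}}(u+v)^{-1-\theta}\,du\,dv\le\ell_j^{1-\theta}/(\theta(1-\theta))$, and then the refined bound $|\xi(t)-\xi(s)|^{\lambda/(1+\delta)}\lesssim|t-s|^{\lambda/(1+\delta)}+\ell_j^{\lambda/(1+\delta)}+\ell_i^{\lambda/(1+\delta)}$ so that the cell‑length factors merge with $\ell_j^{1-\theta}$ into $\ell_j^{1-\theta+\lambda/(p(1+\delta))}$, which sums over cells to $T\,\|\pi\|^{\lambda/(p(1+\delta))-\theta}$. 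All these manipulations need $\theta<\tfrac{\lambda}{p(1+\delta)}$; this is exactly what the hypothesis $\theta<1+\tfrac{\lambda}{1+\delta}-p$ delivers, since $\lambda\le\delta$ may be assumed (else $X$ is a.s.\ constant) and $p\ge1$. Collecting the three contributions and the constraints $1\le p\le\delta$, $0<\theta<\min(2-p,1+\tfrac{\lambda}{1+\delta}-p)$ yields the common factor $d_1(f)\,\|\pi\|^{1+\lambda/(1+\delta)-p-\theta}$, and the general $f$ follows by the reductions above, as in \proref{GagliardoComposite}. The delicate point --- the main obstacle --- is precisely the near--different region: the obvious simplifications ($|h_\pi(t)-h_\pi(s)|\le|h_\pi(t)|+|h_\pi(s)|$, plain Minkowski, or a Sobolev embedding $[\,\cdot\,]_{\theta,1}\lesssim[\,\cdot\,]_{\theta',p}$) all discard the cancellation between $f\circ\hat X_\pi$ and $f\circ X$ at the cell boundaries, where $f\circ\hat X_\pi$ genuinely jumps and $[f\circ\hat X_\pi]_{\theta,1}$ by itself does not tend to $0$ with $\|\pi\|$; the estimate must instead evaluate the double integral near each corner as a bona fide two‑dimensional integral, with the $\|\pi\|$‑decay supplied by the two‑point bound and with the cell sums telescoped through the variation‑measure representation, uniformly over \emph{arbitrary} tagged partitions of a given mesh.
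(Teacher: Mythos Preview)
Your plan is correct and your two-point estimate is exactly what the paper isolates as Proposition~\ref{the:WeakContinuity} (your level-wise splitting at $\eta\asymp|a-b|^{\lambda/(1+\delta)}$ is the same optimization carried out there via the quantile function). The decomposition into same-cell and different-cell contributions is also shared. Where you diverge is in how you pass from $\|h_\pi\|_{W^\theta_1(0+)}$ to pointwise moments: you use Minkowski's integral inequality, which keeps the kernel $|t-s|^{-1-\theta}$ and forces you into the ``refined bound'' analysis on the near--different region, since the crude estimate $(\E|h_\pi(t)|^p)^{1/p}\lesssim\|\pi\|^{r/p}$ combined with $\int_{A_j}\int_{t_j}^T(t-s)^{-1-\theta}\,dt\,ds\lesssim\ell_j^{1-\theta}$ produces $\sum_j\ell_j^{1-\theta}$, which is \emph{not} controlled by the mesh alone. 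Your remedy (splitting $|\xi(t)-\xi(s)|^{r/p}\lesssim|t-s|^{r/p}+\ell_i^{r/p}+\ell_j^{r/p}$ so that each cell-length factor pairs with its own $\ell^{1-\theta}$ to give $\ell^{1-\theta+r/p}$, summable since $r/p\ge\theta$) is correct but laborious. The paper instead applies Jensen's inequality \emph{first}, against the finite measures $t^{-\theta}\,dt$ and $|t-s|^{-\theta}\,ds\,dt$, to obtain
\[
 \E\|h_\pi\|^p_{W^\theta_1(0+)}
 \wle c\int_0^T\E|h_\pi(t)|^p\,t^{-\theta}\,dt
 + c\iint\frac{\E|h_\pi(t)-h_\pi(s)|^p}{|t-s|^{p+\theta}}\,ds\,dt.
\]
With the new kernel $|t-s|^{-p-\theta}$ the plain triangle inequality $|h_\pi(t)-h_\pi(s)|^p\le 2^{p-1}(|h_\pi(t)|^p+|h_\pi(s)|^p)$ on the off-diagonal blocks is enough: one gets $\|\pi\|^r\sum_i\int_{A_i}\int_{t_i}^T(t-s)^{-p-\theta}\,dt\,ds\lesssim\|\pi\|^r\sum_i\ell_i^{2-p-\theta}$, and now the exponent $2-p-\theta\in(0,1]$ (from $\theta<2-p$ and $p+\theta\ge 1$) makes $\sum_i\ell_i^{2-p-\theta}\le T\|\pi\|^{1-p-\theta}$ immediate. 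So the ``main obstacle'' you identify is an artifact of choosing Minkowski over Jensen; the cancellation you worry about is never needed. (A small side remark: your assertion that one may assume $\lambda\le\delta$ should read $\lambda\le 1+\delta$; this is what Kolmogorov--Chentsov gives, and it is exactly what you need to deduce $r/p\le 1$ and hence $\theta<r/p$ from $\theta<1+r-p$.)
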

\begin{proof}
\changedi{By applying Proposition~\ref{the:WeakContinuity} with $q=\changedii{r}$, we see that there exists a constant $c_2$ such that
\begin{equation}
\label{eq:Avikainen2}
 \E |f(X_t)-f(X_s)|^p
 \wle c_2 d_1(f) |t-s|^{\changedii{v}},
\end{equation}
for almost every $t \in [0,T]$ and for every $s \in [0,T]$ such that $|t-s| \le c_0^{\changedii{-1/H}}$, where $v = \frac{r}{1+r}H$.}

Now consider a tagged partition $\pi$ of mesh size $||\pi|| \le c_0^{\changedii{-1/H}}$, and denote the intervals of the partition by $A_i = (t_{i-1},t_i]$. 
By applying Jensen's inequality we see that
\begin{equation}
 \label{eq:InterpolationError1}
\begin{aligned}
 ||h_\pi||_{W^\theta_1(0+)}^p
 &\weq \left( \int_0^T |h_\pi(t)| t^{-\theta} dt + \frac{1}{2} \int_0^T \int_0^T \frac{|h_\pi(t)-h_\pi(s)|}{|t-s|^{1+\theta}} ds dt \right)^p \\
 &\wle c_3 \int_0^T |h_\pi(t)|^p t^{-\theta} dt + c_4 \int_0^T \int_0^T \frac{|h_\pi(t)-h_\pi(s)|^p}{|t-s|^{p+\theta}} ds dt.
\end{aligned}
\end{equation}
where $c_3 = 2^{p-1} (\int_0^T t^{-\theta} dt)^{p-1}$ and $c_4 = 2^{-1} (\int_0^T \int_0^T |t-s|^{-\theta} ds dt)^{p-1}$.

To estimate the first term on the right side of \eqref{eq:InterpolationError1}, observe first that
\[
 |h_\pi(t)|^p
 \weq \sum_{i} |f(X_{\xi_i})-f(X_t)|^p \, 1_{(t_{i-1},t_i]}(t).
\]
Because $|t-\xi_i| \le t_i - t_{i-1}$ for $t,\xi_i \in [t_{i-1},t_i]$, we see by applying \eqref{eq:Avikainen2}, the subadditivity of $t \mapsto t^{1-\theta}$, and the formula $\sum_{i=1}^k (t_i - t_{i-1}) = T$ that
\begin{align*}
 \E \int_0^T |h_\pi(t)|^p \, t^{-\theta} dt
 &\weq  \sum_{i} \int_{t_{i-1}}^{t_i} \E  |f(X_{\xi_i})-f(X_t)|^p \, t^{-\theta} dt \\
 &\wle c_2 d_1(f) \sum_{i} (t_i - t_{i-1})^{\changedii{v}} \int_{t_{i-1}}^{t_i} t^{-\theta} dt \\
 &\weq c_2 d_1(f) (1-\theta)^{-1} \sum_{i} (t_i - t_{i-1})^{\changedii{v}} ( t_i^{1-\theta} - t_{i-1}^{1-\theta} ) \\
 &\wle c_2 d_1(f) (1-\theta)^{-1} \sum_{i} (t_i - t_{i-1})^{{\changedii{v}} - \theta + 1} \\
 &\wle c_2 d_1(f) T (1-\theta)^{-1} ||\pi||^{{\changedii{v}} - \theta}.
\end{align*}
We conclude that the expectation of the first term on the right side of \eqref{eq:InterpolationError1} is bounded by
\begin{equation}
\label{eq:InterpolationError2}
 \E \int_0^T |h_\pi(t)|^p \, t^{-\theta} dt
 \wle c_2 d_1(f) T (1-\theta)^{-1} ||\pi||^{{\changedii{v}} - \theta}.
\end{equation}

Let us next analyse the second term in the upper bound of \eqref{eq:InterpolationError1}. By computing integrals part by part along the partition $\pi$, we see that
\[
 \E \left( \int_0^T \int_0^T \frac{|h_\pi(t)-h_\pi(s)|^p}{|t-s|^{p+\theta}} ds dt \right)
 \weq \sum_i m_{i,i} + 2 \sum_{i,j: i<j} m_{i,j},
\]
where
\[
 m_{i,j}
 \weq \E \int_{A_i} \left( \int_{A_j} \frac{| f(X_{\xi_j}) - f(X_t) - f(X_{\xi_i}) + f(X_s)|^p}{|t-s|^{p+\theta}} \, dt \right) ds.
\]

(i) For the diagonal terms, we see by \eqref{eq:Avikainen2} that
\begin{align*}
 m_{i,i}
 \weq \int_{A_i} \int_{A_i} \frac{\E | f(X_t) - f(X_s)|^p}{|t-s|^{p+\theta}} ds dt
 &\weq  \int_{t_{i-1}}^{t_i} \int_{t_{i-1}}^t \frac{\E | f(X_t) - f(X_s)|^p}{(t-s)^{p+\theta}} ds dt \\
 &\wle c_2 d_1(f) \int_{t_{i-1}}^{t_i} \int_{t_{i-1}}^t (t - s)^{{\changedii{v}} - \theta - p} ds dt.
\end{align*} 
Denote $\rho = {\changedii{v}} - \theta - p + 1$. Then $\rho > 0$ and
\[
 \int_{t_{i-1}}^t (t - s)^{\rho-1} ds dt
 \weq \rho^{-1} (1+\rho)^{-1} (t_i - t_{i-1})^{1+\rho}
 \wle \rho^{-1} (1+\rho)^{-1} ||\pi||^\rho (t_i - t_{i-1}),
\]
so that
\[
 m_{i,i}
 \wle c_2 d_1(f) \rho^{-1} (1+\rho)^{-1} ||\pi||^\rho (t_i - t_{i-1}).
\]
By summing over $i$ we find that
\[
 \sum_i m_{i,i}
 \wle c_5' d_1(f) T \, ||\pi||^\rho
\]
with $c_5' = \rho^{-1}  (1+\rho)^{-1} c_2$.

(ii) For the offdiagonal terms with $i < j$, we find that $m_{i,j} \le 2^{p-1} (m_{i,j}(1) + m_{i,j}(2))$ where
\begin{align*}
 m_{i,j}(1)
 &\weq \int_{A_i} \left( \int_{A_j} \frac{\E | f(X_{\xi_i}) - f(X_s)|^p}{|t-s|^{p+\theta}} dt \right) ds, \\
 m_{i,j}(2)
 &\weq \int_{A_i} \left( \int_{A_j} \frac{\E | f(X_{\xi_j}) - f(X_t)|^p}{|t-s|^{p+\theta}} dt \right) ds.
\end{align*}
For the first term, for $i<j$, we find by \eqref{eq:Avikainen2} that
\[
 m_{i,j}(1)
 \wle c_2 d_1(f) ||\pi||^{\changedii{v}} \int_{A_i} \left( \int_{A_j} |t-s|^{-\theta-p} dt \right) ds.
\]
By summing over $j>i$, and then over $i$, we find that
\[
 \sum_i \sum_{j > i} m_{i,j}(1)
 \wle  c_2 d_1(f) ||\pi||^{\changedii{v}} \sum_i \int_{A_i} \left( \sum_{j>i} \int_{A_j} (t-s)^{-\theta-p} dt \right) ds.
\]
Observe next that
\begin{align*}
 \int_{A_i} \left( \sum_{j>i} \int_{A_j} (t-s)^{-\theta-p} dt \right) ds
 &\weq \int_{t_{i-1}}^{t_i} \left( \int_{t_i}^T (t-s)^{-\theta-p} dt \right) ds \\
 &\wle \int_{t_{i-1}}^{t_i} \left( \int_{t_i}^\infty (t-s)^{-\theta-p} dt \right) ds \\
 &\weq \int_{t_{i-1}}^{t_i} (\theta+p-1)^{-1} (t_i-s)^{1-\theta-p} ds \\
 &\weq (\theta+p-1)^{-1} (2-(\theta+p))^{-1}  (t_i-t_{i-1})^{2-(\theta+p)}.
\end{align*}
Hence
\begin{align*}
 \sum_i \sum_{j > i} m_{i,j}(1)
 &\wle c_5'' d_1(f) ||\pi||^{\changedii{v}} \sum_i (t_i-t_{i-1})^{2-(\theta+p)} \\
 &\wle c_5'' d_1(f) T ||\pi||^{1 + {\changedii{v}} - \theta - p} \\
 &\weq c_5'' d_1(f) T ||\pi||^{\rho},
\end{align*}
where $c_5'' = (\theta+p-1)^{-1} (2-(\theta+p))^{-1} c_2$.

For the second term, for $i<j$, we find by \eqref{eq:Avikainen2} in the same way as above that
\[
 m_{i,j}(2)
 \wle c_2 d_1(f) ||\pi||^{\changedii{v}} \int_{A_i} \left( \int_{A_j} |t-s|^{-\theta-p} dt \right) ds.
\]
This upper bound is the same as that earlier, with $i$ and $j$ interchanged. Therefore $\sum_j \sum_{i: i<j} m_{i,j}(2)$ satisfies the same upper bound as above, and we conclude that
\begin{equation}
\label{eq:InterpolationError3}
\begin{aligned}
 \E \left( \int_0^T \int_0^T \frac{|h_\pi(t)-h_\pi(s)|^p}{|t-s|^{p+\theta}} \right) ds dt
 &\wle \sum_i m_{i,i} + 2^{p+1} \sum_{i} \sum_{j:j>i} m_{i,j}(1) \\
 &\wle (c_5' + 2^{p+1} c_5'') T \, d_1(f) ||\pi||^\rho.
\end{aligned}
\end{equation}
By substituting \eqref{eq:InterpolationError2} and \eqref{eq:InterpolationError3} into \eqref{eq:InterpolationError1}, we conclude that 
\[
 ||h_\pi||_{W^\theta_1(0+)}^p
 \wle c_3 c_2 T (1-\theta)^{-1} d_1(f) ||\pi||^{{\changedii{v}} - \theta} + c_4(c_5' + 2^{p+1} c_5'') T \, d_1(f) ||\pi||^\rho.
\]
Because ${\changedii{v}} - \theta = \rho + (p-1) \ge \rho$, we conclude that $||\pi||^{{\changedii{v}} - \theta} \le T ||\pi||^\rho$ for $||\pi|| \le T$. Hence the claim follows.
\end{proof}

\appendix

\section{Approximation by smooth functions}

The purpose of this section is to approximate a function $f$ of locally finite variation by a smooth function $f_n$, and to show that for a sufficiently regular random process $X$, the approximation can be made so that $f_n \circ X$ is with high probability close to $f \circ X$ in the Gagliardo seminorm. We approximate $f$ in a standard manner by
\begin{equation}
 \label{eq:Mollification}
 f_n(x)
 \weq \E f(x - n^{-1}\xi)
\end{equation}
where $\xi$ is a real-valued random variable in $[0,1]$ with an infinitely differentiable probability density function. The following is the main result of this section.

\begin{proposition}
\label{the:SmoothGagliardo}
Let $X$ be a \Holder continuous random process of order $\alpha > 0$ with a probability density function bounded according to Assumption~\ref{ass:Density}. Then for any right-continuous function $f$ of locally finite variation and any $\theta \in (0,\alpha)$,
\[
 [f_n \circ X - f \circ X]_{\theta,1} \ \pto \ 0.
\]
\end{proposition}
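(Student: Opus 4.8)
The plan is to prove the convergence by a chain of reductions followed by separate treatment of the continuous and the jump part of $f$. Writing $h_n := f_n \circ X - f \circ X$, I first \emph{localize}: on the event $\{\|X\|_\infty \le k\}$ the process $h_n$ coincides with the object built from the truncation $f^{(k+1)}$ (which has \emph{finite} variation and agrees with $f$ on $[-k-1,k]$, covering the mollification window for $n\ge 1$), and since $\pr(\|X\|_\infty > k)\to 0$ it suffices to treat $f$ of finite variation. Splitting $f$ into its increasing and decreasing variation functions, and using linearity of the mollification together with the fact that $[\,\cdot\,]_{\theta,1}$ is a seminorm, I may assume $f$ right-continuous and increasing, so that $f = \text{const} + \mu_f((-\infty,\,\cdot\,])$ for a finite measure $\mu_f$. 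Then $h_n(t) = \int_{\R} G_n(X_t - y)\,\mu_f(dy)$ with $G_n(z) = F_n(z) - 1_{\{z\ge 0\}}$, $F_n$ being the distribution function of $n^{-1}\xi$; thus $G_n$ is supported on $[0,1/n)$, bounded by $1$, has a single jump (of size $-1$) at $0$, and is $O(n)$--Lipschitz on $(0,1/n)$. Since $G_n(X_t-y)\to -1_{\{X_t=y\}}$ for every $t$, dominated convergence in $\mu_f$ gives $h_n(t)\to -\mu_f(\{X_t\})$, which vanishes for a.e.\ $t$ almost surely because $X_t$ has a density and $\mu_f$ has only countably many atoms. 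Finally I split $\mu_f = \mu_f^c + \mu_f^d$ into its continuous and atomic parts, write $f = f^c + f^j$ accordingly, and bound $[h_n]_{\theta,1} \le [f_n^c\circ X - f^c\circ X]_{\theta,1} + [f_n^j\circ X - f^j\circ X]_{\theta,1}$.

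For the continuous part $h_n^c := f_n^c\circ X - f^c\circ X$: since $f^c$ is continuous it is uniformly continuous on compacts, hence $\sup_{|x|\le k}|f_n^c(x)-f^c(x)|\to 0$ \emph{deterministically}, so $\|h_n^c\|_\infty\to 0$ a.s.\ on $\{\|X\|_\infty\le k\}$. On the other hand, fixing $\theta'\in(\theta,\alpha)$, Proposition~\ref{the:GagliardoNonrandom} with $p=1$ together with the bound $\E|X_t-y|^{-\theta'/\alpha}\le 1 + \frac{2}{1-\theta'/\alpha}\hat p_t$ (as in the proof of Proposition~\ref{the:GagliardoComposite}) and the fact that the total variation of $h_n^c$ is at most $2V(f)$ show that $\E\big([h_n^c]_{\theta',1}\,1_{\{[X]_{\alpha,\infty}\le m,\ \|X\|_\infty\le k\}}\big)$ is bounded uniformly in $n$. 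Combining these two facts with the elementary interpolation inequality $[\phi]_{\theta,1}\le \delta^{\theta'-\theta}[\phi]_{\theta',1} + 2T^2\delta^{-1-\theta}\|\phi\|_\infty$ (for $0<\theta<\theta'$, $\delta\in(0,T]$) and sending first $\delta$, then $n$, then $m,k$ to their limits yields $[h_n^c]_{\theta,1}\prto 0$.

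For the jump part, write $\mu_f^d = \sum_j a_j\delta_{b_j}$ with $\sum_j a_j<\infty$. The tail $\sum_{j>N}a_j\delta_{b_j}$ contributes, by the same Proposition~\ref{the:GagliardoNonrandom}/Assumption~\ref{ass:Density} estimate as above, a term whose Gagliardo seminorm has expectation $\le C_{m,k}\sum_{j>N}a_j$ uniformly in $n$ on $\{[X]_{\alpha,\infty}\le m,\ \|X\|_\infty\le k\}$, hence can be made negligible as $N\to\infty$; it therefore remains to show $[\,G_n(X_\cdot - b)\,]_{\theta,1}\prto 0$ for a single fixed level $b$. I split the double integral defining $[\,G_n(X_\cdot-b)\,]_{\theta,1}$ into the part $A_n$ where the pair $(X_s,X_t)$ does \emph{not} straddle the jump of $G_n$ at $0$, and the part $B_n$ where it does. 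On $A_n$ the increment $|G_n(X_t-b)-G_n(X_s-b)|$ is controlled by $(\text{const}\cdot n\,|X_t-X_s|)\wedge 1$ times the indicator that $X_t$ or $X_s$ lies in the window $[b,b+1/n)$; inserting $|X_t-X_s|\le [X]_{\alpha,\infty}|t-s|^\alpha$ and integrating gives $A_n\lesssim [X]_{\alpha,\infty}^{\theta/\alpha}\,n^{\theta/\alpha}\,\Leb\{t:X_t\in[b,b+1/n)\}$, whose expectation is $O(n^{\theta/\alpha-1})\to 0$ by Assumption~\ref{ass:Density} (since $\theta/\alpha<1$). For $B_n$ the integrand converges to $0$ for a.e.\ $(s,t)$ (because $\Leb\{t:X_t=b\}=0$ a.s.) and is dominated, uniformly in $n$, by $|t-s|^{-1-\theta}1_{\{b\in(X_s\wedge X_t,\,X_s\vee X_t]\}}$, which is a.s.\ integrable over $[0,T]^2$ by Lemma~\ref{the:HolderSingular}; dominated convergence then gives $B_n\to 0$ a.s. Assembling the three pieces on the events $\{[X]_{\alpha,\infty}\le m,\ \|X\|_\infty\le k\}$ and letting $m,k\to\infty$ finishes the argument.

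The step I expect to be the main obstacle is the single-atom estimate $[\,G_n(X_\cdot-b)\,]_{\theta,1}\prto 0$: one cannot dodge the unit jump of $G_n$, and the crude variation bound used for the continuous part (where the variation measure of the error is merely \emph{uniformly} bounded in $n$, not small) is useless here, so the argument must genuinely combine the H\"older regularity of $X$ away from the jump, the quantitative control of the occupation time of the shrinking window $[b,b+1/n)$ furnished by Assumption~\ref{ass:Density}, and a dominated-convergence argument across the jump.
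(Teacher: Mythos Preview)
Your argument is correct, but it proceeds quite differently from the paper's proof.

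The paper does \emph{not} decompose $\mu_f$ into continuous and atomic parts. Instead, for $f$ increasing and bounded it writes
\[
\E\int_0^T\!\!\int_0^t \frac{|Z_n(s,t)|}{(1+[X]_{\alpha,\infty})^{q}}\,ds\,dt
\;=\;\int_{\R} h(y)\,\mu_{f_n}(dy),
\qquad
Z_n(s,t)=\frac{f_n(X_t)-f_n(X_s)}{(t-s)^{1+\theta}},
\]
where $h$ is the weighted level-crossing functional of Lemma~\ref{the:MeanContinuity}. That lemma (the real workhorse) shows $h$ is bounded and \emph{continuous} in $y$; hence by the weak convergence $\mu_{f_n}\Rightarrow\mu_f$ (Lemma~\ref{the:SmoothLS}) one gets convergence of the weighted $L^1$ norms of $Z_n$ to that of $Z$. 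Together with pointwise convergence $Z_n\to Z$ this yields, via the Scheff\'e-type implication $\|Z_n\|_1\to\|Z\|_1 \Rightarrow \|Z_n-Z\|_1\to 0$, the desired $[f_n\circ X-f\circ X]_{\theta,1}\prto 0$ in one stroke.

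Your route bypasses Lemma~\ref{the:MeanContinuity} entirely: for the diffuse part you exploit the uniform convergence $\|h_n^c\|_\infty\to 0$ plus an interpolation $[\,\cdot\,]_{\theta,1}\le \delta^{\theta'-\theta}[\,\cdot\,]_{\theta',1}+C\delta^{-1-\theta}\|\cdot\|_\infty$ with a uniformly bounded $\theta'$-seminorm; for the atomic part you reduce to a single level $b$ and split the kernel $G_n$ at its jump, using the occupation-time estimate for the shrinking window $[b,b+1/n)$ and dominated convergence across the jump. Each step is sound (one small remark: the $O(n^{\theta/\alpha-1})$ bound for $A_n$ should be read after restricting to $\{[X]_{\alpha,\infty}\le m\}$, as you do in the final assembly, since no moment control on $[X]_{\alpha,\infty}$ is assumed). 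The trade-off: the paper's argument is shorter and treats $\mu_f$ uniformly, but hinges on the nontrivial continuity lemma for $h$ (which itself needs a uniform-integrability argument); your approach is longer and more hands-on, but uses only the basic estimates already established for Proposition~\ref{the:GagliardoComposite} and avoids any new auxiliary lemma.
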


The proof of the proposition utilises the following two auxiliary results.

\begin{lemma}
\label{the:SmoothLS}
Let $f$ be bounded, increasing, and right continuous. Then the function $f_n$ defined by \eqref{eq:Mollification} is increasing, infinitely differentiable and bounded, and $f_n \to f$ pointwise. Moreover, the Lebesgue--Stieltjes measure $\mu_{f_n}$ of $f_n$ satisfies
\[
 \int_\R h(y) \, \mu_{f_n}(dy)
 \to 
 \int_\R h(y) \, \mu_{f}(dy)
\]
for all bounded continuous $h$ on $\R$.
\end{lemma}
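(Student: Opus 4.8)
The plan is to read the elementary assertions (monotonicity, boundedness, smoothness, pointwise convergence) off the representation
\[
 f_n(x) \weq \int_0^1 f(x - n^{-1}u)\, \rho(u)\, du \weq (f * \psi_n)(x),
\]
where $\rho$ is the density of $\xi$ and $\psi_n$ is the density of $n^{-1}\xi$, a smooth probability density supported on $[0, n^{-1}]$. Monotonicity holds because $u \mapsto f(x_1 - n^{-1}u) \le f(x_2 - n^{-1}u)$ pointwise whenever $x_1 \le x_2$; boundedness is immediate from $|f_n| \le ||f||_\infty$; and $f_n \in C^\infty$ with $f_n^{(k)} = f * \psi_n^{(k)}$ because a bounded measurable function convolved with a compactly supported $C^\infty$ kernel is smooth, with derivatives passing onto the kernel. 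For the pointwise limit, note that $x - n^{-1}\xi \uparrow x$ almost surely (and stays strictly below $x$, since $\xi > 0$ a.s.), so $f(x - n^{-1}\xi) \to f(x-)$ a.s.\ and bounded convergence gives $f_n(x) \to f(x-)$; this equals $f(x)$ at every continuity point of $f$, hence at all but countably many $x$.

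For the Lebesgue--Stieltjes measures I would first record that, since $f$ is bounded and increasing, $\mu_f$ is a \emph{finite} measure on $\R$. The central step is the identity $\mu_{f_n} = \mu_f * \psi_n$, meaning $\mu_{f_n}$ is the image of $\mu_f \otimes (\psi_n\, dz)$ under $(y,z) \mapsto y+z$; in particular $\mu_{f_n}(\R) = \mu_f(\R)$ for all $n$, so no mass escapes to infinity. I would prove the identity by comparing the two finite measures on half-lines: writing $f(-\infty) = \lim_{x \to -\infty} f(x) \in \R$,
\begin{align*}
 (\mu_f * \psi_n)(-\infty, x]
 &\weq \int_\R \mu_f(-\infty, x-z]\, \psi_n(z)\, dz
 \weq \int_\R \big( f(x-z) - f(-\infty) \big)\, \psi_n(z)\, dz \\
 &\weq f_n(x) - f(-\infty)
 \weq \mu_{f_n}(-\infty, x],
\end{align*}
and a finite Borel measure on $\R$ is determined by its values on the half-lines $(-\infty, x]$.

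With the identity established, for any bounded continuous $h$ I would write
\[
 \int_\R h(y)\, \mu_{f_n}(dy)
 \weq \int_\R \left( \int_\R h(y+z)\, \psi_n(z)\, dz \right) \mu_f(dy),
\]
observe that the inner integral tends to $h(y)$ for each $y$ (because $\psi_n$ concentrates at $0$ and $h$ is continuous) while staying bounded by $||h||_\infty$, and conclude by bounded convergence — using finiteness of $\mu_f$ — that $\int h\, d\mu_{f_n} \to \int h\, d\mu_f$, which is the claim.

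The calculations are routine; the two points needing care are (a) the mollifier is one-sided, so the honest pointwise limit of $f_n$ is $f(x-)$ and the asserted convergence $f_n \to f$ must be understood at continuity points of $f$ (equivalently $\Leb$-a.e.), none of which affects the mass computation or the convolution identity; and (b) the boundedness of $f$ — equivalently the finiteness of $\mu_f$ — is exactly what upgrades the convergence from compactly supported test functions to all bounded continuous $h$ by ruling out loss of mass at infinity. Point (b) is the only place where an assumption is genuinely used, so it is the conceptual crux, modest as it is.
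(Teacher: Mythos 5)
Your proof is correct, and for the key assertion about the Lebesgue--Stieltjes measures it takes a genuinely different route from the paper. The paper checks that $\mu_{f_n}(\R)=\mu_f(\R)$ and that the distribution functions converge, normalizes both measures to probability measures, and invokes the portmanteau theorem (treating the degenerate case $\mu_f=0$ separately); you instead establish the exact identity $\mu_{f_n}=\mu_f * \psi_n$ by matching the two finite measures on half-lines, and then get the convergence of $\int h\,d\mu_{f_n}$ directly from Fubini and dominated convergence against the finite measure $\mu_f$. Your route is slightly cleaner: it avoids the normalization step and the case split, and it isolates where boundedness of $f$ (finiteness of $\mu_f$) is really used. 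You also correctly flag a point the paper glosses over: since the mollifier $x-n^{-1}\xi$ approaches $x$ from the left and $f$ is \emph{right}-continuous, the honest pointwise limit of $f_n(x)$ is $f(x-)$, so the asserted convergence $f_n\to f$ holds only off the countable jump set of $f$ (the paper's appeal to dominated convergence for convergence at \emph{every} $x$ is not quite right). As you note, this does not affect the weak convergence of the measures --- the distribution functions still converge at every non-atom of $\mu_f$, which is all that either argument needs --- and it is harmless downstream, where $f_n\circ X\to f\circ X$ is only ever needed $\pr\otimes\Leb$-almost everywhere and $X_t$ avoids the countable jump set almost surely for almost every $t$ by Assumption~\ref{ass:Density}.
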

\begin{proof}
The total mass of $\mu_f$ equals $\mu(\R) = f(+\infty) - f(-\infty)$ and is finite and nonnegative. Lebesgue's dominated convergence theorem implies that $f_n \to f$ pointwise. Lebesgue's monotone convergence theorem implies that $f_n(\pm\infty) = f(\pm\infty)$.

We can write $f_n$ as a convolution
\[
 f_n(x)
 \weq \int_{-\infty}^\infty f(x - y) \phi_n(y) \, dy
\]
where $\phi_n(y) = n \phi(ny)$ is the probability density function of $n^{-1}\xi$. This implies that $f_n$ is infinitely differentiable. The definition also shows that $f_n$ is increasing. Hence the Lebesgue--Stieltjes measure of $f_n$ equals
\[
 \mu_{f_n}(dx) \weq f'_n(x) dx.
\]
Because $f_n(\pm\infty) = f(\pm\infty)$, we see that $\mu_n(\R) = \mu(\R)$. The pointwise convergence $f_n \to f$ implies that
\[
 \mu_n(-\infty,x]
 \weq f_n(x) - f_n(-\infty)
 \weq f_n(x) - f(-\infty)
 \to f(x) - f(-\infty)
 \weq \mu(-\infty,x]
\]
for all real numbers $x$. Assuming that $f$ is not constant, we have $\mu(\R) \in (0,\infty)$, and we find by applying the portmanteau theorem that
$\mu_n(dx)/\mu(\R) \to \mu(dx)/\mu(\R)$ weakly. This implies the claim. If $f$ is constant, the claim is trivial because then $\mu_f = \mu_{f_n} = 0$.
\end{proof}

\begin{lemma}
\label{the:MeanContinuity}
If $X$ is a \Holder continuous random process of order $\alpha > 0$ with a probability density function bounded according to Assumption~\ref{ass:Density}, then for any $\theta \in (0,\alpha)$ and $q \ge \theta/\alpha$, the functions
\[
 y \ \mapsto \
 \E \left\{ (1+[X]_{\alpha,\infty})^{-q} \int_0^T \int_0^t \frac{\ind{X_s < y < X_t}}{(t-s)^{1+\theta}} \, ds dt \right\}
\]
and
\[
 y \ \mapsto \
 \E \left\{ (1+[X]_{\alpha,\infty})^{-q} \int_0^T \int_0^t \frac{\ind{X_t < y < X_s}}{(t-s)^{1+\theta}} \, ds dt \right\}
\]
are bounded and continuous.
\end{lemma}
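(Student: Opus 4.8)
The plan is to establish the assertion for the first of the two functions, say $F(y)=\E\{(1+L)^{-q}\int_0^T\int_0^t (t-s)^{-1-\theta}\ind{X_s<y<X_t}\,ds\,dt\}$ with $L:=[X]_{\alpha,\infty}$; the claim for the second function then follows by applying this to the time-reversed process $\tilde X_t:=X_{T-t}$, which satisfies the same hypotheses, has $[\tilde X]_{\alpha,\infty}=L$, and for which the change of variables $(s,t)\mapsto(T-t,T-s)$ turns $\int_0^T\int_0^t(t-s)^{-1-\theta}\ind{X_t<y<X_s}\,ds\,dt$ into $\int_0^T\int_0^t(t-s)^{-1-\theta}\ind{\tilde X_s<y<\tilde X_t}\,ds\,dt$. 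Boundedness of $F$ is immediate: since $\ind{X_s<y<X_t}=\ind{X_s<y}\ind{X_t>y}$ and $X$ is a.s.\ $\alpha$-H\"older, Lemma~\ref{the:HolderSingular} gives $\int_0^t(t-s)^{-1-\theta}\ind{X_s<y}\,ds\le\theta^{-1}L^{\theta/\alpha}|X_t-y|^{-\theta/\alpha}$ on $\{X_t>y\}$, and because $q\ge\theta/\alpha$ forces $(1+L)^{-q}L^{\theta/\alpha}\le1$ we obtain $F(y)\le\theta^{-1}\int_0^T\E|X_t-y|^{-\theta/\alpha}\,dt$, which Lemma~\ref{the:Tikanmaki} bounds by $\theta^{-1}(T+2(1-\theta/\alpha)^{-1}\int_0^T\hat p_t\,dt)$, a finite constant independent of $y$.

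For continuity, use Tonelli to write $F(y)=\int\!\!\int_{0\le s<t\le T}(t-s)^{-1-\theta}\Phi(s,t,y)\,ds\,dt$ with $\Phi(s,t,y):=\E[(1+L)^{-q}\ind{X_s<y<X_t}]\in[0,1]$, and for $\epsilon\in(0,1)$ split this integral over $\{t-s\ge\epsilon\}$ and $\{t-s<\epsilon\}$, obtaining $F=A_\epsilon+B_\epsilon$. For almost every $(s,t)$ the laws of $X_s$ and $X_t$ are atomless, so $y\mapsto\Phi(s,t,y)$ is continuous at every point by dominated convergence (the integrand being dominated by $1$); since on $\{t-s\ge\epsilon\}$ the integrand $(t-s)^{-1-\theta}\Phi(s,t,y)$ is bounded by the integrable constant $\epsilon^{-1-\theta}$, a further dominated convergence argument shows $A_\epsilon$ is continuous on $\R$. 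Hence it suffices to prove $\sup_{y\in\R}B_\epsilon(y)\to0$ as $\epsilon\to0$: given $y_0$ and $\eta>0$, one first picks $\epsilon$ with $2\sup_yB_\epsilon(y)<\eta/2$, then a neighbourhood of $y_0$ on which $|A_\epsilon-A_\epsilon(y_0)|<\eta/2$, whence $|F(y)-F(y_0)|<\eta$.

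The heart of the matter is this near-diagonal estimate. Refining the proof of Lemma~\ref{the:HolderSingular}: on $\{X_t>y\}$ the truncated integral $\int_{\max(t-\epsilon,0)}^{t}(t-s)^{-1-\theta}\ind{X_s<y}\,ds$ either vanishes or, with $s^*:=\sup\{s\le t:X_s<y\}$, is at most $\theta^{-1}(t-s^*)^{-\theta}$, where necessarily $t-s^*<\epsilon$ and $X_t-y\le L(t-s^*)^\alpha$; in every case $\int_{\max(t-\epsilon,0)}^{t}(t-s)^{-1-\theta}\ind{X_s<y<X_t}\,ds\le\theta^{-1}L^{\theta/\alpha}(X_t-y)^{-\theta/\alpha}\ind{0<X_t-y<L\epsilon^\alpha}$. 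Integrating in $t$, multiplying by $(1+L)^{-q}$, and using $(1+L)^{-q}L^{\theta/\alpha}\le1$ gives $B_\epsilon(y)\le\theta^{-1}\E\int_0^T(X_t-y)^{-\theta/\alpha}\ind{0<X_t-y<L\epsilon^\alpha}\,dt$. Now $\ind{0<X_t-y<L\epsilon^\alpha}\le\ind{0<X_t-y<\epsilon^{\alpha/2}}+\ind{L>\epsilon^{-\alpha/2}}\ind{X_t>y}$: the first part contributes, after Tonelli and $p_t\le\hat p_t$, at most a constant times $\epsilon^{(\alpha-\theta)/2}\int_0^T\hat p_t\,dt\to0$ uniformly in $y$; for the second, choosing $q'\in(1,\alpha/\theta)$ with conjugate $p'$ and applying H\"older's inequality bounds it by $\pr(L>\epsilon^{-\alpha/2})^{1/p'}(\E Z_y^{q'})^{1/q'}$ with $Z_y:=\int_0^T(X_t-y)^{-\theta/\alpha}\ind{X_t>y}\,dt$, and Jensen's inequality together with $q'\theta/\alpha<1$ shows $\E Z_y^{q'}\le T^{q'-1}\int_0^T(1+(1-q'\theta/\alpha)^{-1}\hat p_t)\,dt$ is finite and independent of $y$, while $\pr(L>\epsilon^{-\alpha/2})\to0$ since $L<\infty$ a.s. Thus $\sup_yB_\epsilon(y)\to0$ and $F$ is continuous.

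I expect the only real obstacle to be this uniform-in-$y$ control of the near-diagonal contribution $B_\epsilon$: a straightforward dominated-convergence argument fails because there is no integrable majorant of the integrand near the level $y$, and the random (possibly non-integrable) H\"older constant $L$ must be decoupled from the density bound on $X_t$. This is exactly what the threshold split at $\epsilon^{\alpha/2}$ and the H\"older inequality with exponent $q'<\alpha/\theta$ achieve, the latter being available precisely because $\theta<\alpha$. Everything else reduces to routine estimates.
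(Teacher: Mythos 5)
Your proof is correct, but it takes a genuinely different route to continuity than the paper's. The paper fixes $y$ and proves right- and left-continuity separately: it writes $\phi(y\pm\epsilon)-\phi(y)$ as a difference of two expectations whose integrands tend to zero $\pr\otimes\Leb^2$-a.e., and kills one by dominated convergence and the other by a uniform-integrability argument (bounding the family in $L^p$ for a $p>1$ chosen so that $\tilde\theta=(1+\theta)p-1<\alpha$, which is where $\theta<\alpha$ enters there). You instead split the double time integral at distance $\epsilon$ from the diagonal: the off-diagonal piece $A_\epsilon$ is continuous by plain dominated convergence, since its kernel is bounded by $\epsilon^{-1-\theta}$ and the laws of $X_s,X_t$ are atomless for a.e.\ $(s,t)$, while the near-diagonal piece $B_\epsilon$ is shown to be small uniformly in $y$ via a localized version of Lemma~\ref{the:HolderSingular} --- the truncated singular integral is supported on $\{0<X_t-y\le [X]_{\alpha,\infty}\,\epsilon^\alpha\}$ --- followed by the threshold split at $\epsilon^{\alpha/2}$ and H\"older's inequality with an exponent $q'\in(1,\alpha/\theta)$ (again exactly where $\theta<\alpha$ is used). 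Your argument yields the slightly stronger conclusion that the function is a uniform limit of continuous functions, with a quantitative, $y$-uniform bound on the near-diagonal contribution, whereas the paper's argument is purely qualitative; the hypotheses used are the same. The reduction for the second function also differs (time reversal $t\mapsto T-t$ versus the paper's reflection $X\mapsto -X$ evaluated at $-y$), and both are valid since the reversed process satisfies Assumption~\ref{ass:Density} with $\hat p_{T-t}$ and has the same H\"older seminorm. The only blemishes are cosmetic: the indicator in your localized bound should read $0<X_t-y\le L\epsilon^\alpha$ rather than a strict inequality, and the supremum defining $s^*$ should be taken over $[\max(t-\epsilon,0),t]$ (or one should note that the global supremum falls in this window whenever the truncated integral is nonzero); neither affects the estimate.
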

\begin{proof}
(i) Denote $\phi(y) = \E \Phi(y)$, where
\[
 \Phi(y) \weq (1+[X]_{\alpha,\infty})^{-q} \int_0^T \int_0^t \frac{1_{(X_s,X_t)}(y)}{(t-s)^{1+\theta}} \, ds dt.
\]
By Lemma~\ref{the:HolderSingular},
\[
 \Phi(y)
 \wle \theta^{-1} \frac{[X]_{\alpha,\infty}^{\theta/\alpha}}{(1+[X]_{\alpha,\infty})^{q}} \, \int_0^T |X_t-y|^{-\theta/\alpha} \, dt
 \wle \theta^{-1} \int_0^T |X_t-y|^{-\theta/\alpha} \, dt.
\]
Hence by Lemma~\ref{the:Tikanmaki} and Assumption~\ref{ass:Density}, we see that
\[
 \phi(y) 
 \weq \E \Phi(y)
 \wle \theta^{-1} \int_0^T \left(1 + \frac{2}{1-\theta/\alpha} \hat p_t \right) dt.
\]
The right side above is finite and does not depend on $y$. Hence the function $\phi$ is bounded.

(ii) Let us next verify that $\phi$ is right-continuous at $y \in \R$. Fix $\epsilon > 0$ and observe that
\[
 \ind{X_s < y+\epsilon < X_t} - \ind{X_s < y < X_t}
 \weq \ind{y \le X_s < y + \epsilon < X_t} - \ind{X_s < y < X_t \le y + \epsilon}.
\]
Therefore,
\begin{equation}
 \label{eq:RightLimit}
 \phi(y+\epsilon) - \phi(y)
 \weq \E \int_0^T \int_0^t \Phi_{1,\epsilon}(s,t) \, ds dt - \E \int_0^T \int_0^t \Phi_{2,\epsilon}(s,t) \, ds dt,
\end{equation}
where
\begin{align*}
 \Phi_{1,\epsilon}(s,t)
 &\weq (1+[X]_{\alpha,\infty})^{-q} \, \frac{ \ind{y \le X_s < y + \epsilon < X_t} }{(t-s)^{1+\theta}}, \\
 \Phi_{2,\epsilon}(s,t)
 &\weq (1+[X]_{\alpha,\infty})^{-q} \, \frac{ \ind{X_s < y < X_t \le y + \epsilon} }{(t-s)^{1+\theta}}.
\end{align*}
Assumption~\ref{ass:Density} implies that $\ind{X_s(\omega)=y} = 0$ and $\ind{X_t(\omega)=y} = 0$ for $\pr \otimes \Leb^2$ -almost every $(\omega,s,t)$ in $\Omega \times [0,T]^2$ where $[0,T]^2_< = \{(s,t)\in [0,T]: s < t\}$. Hence, as $\epsilon \to 0$,
\[
 \Phi_{1,\epsilon}(s,t) \ \to \ 0
 \quad \text{and} \quad
 \Phi_{2,\epsilon}(s,t) \ \to \ 0.
\]
for $\pr \otimes \Leb^2$ -almost every $(\omega,s,t)$.

Let us next verify that
\begin{equation}
 \label{eq:LimitPhi1}
 \E \int_0^T \int_0^t \Phi_{1,\epsilon}(s,t) \, ds dt
 \ \to \ 0
\end{equation}
as $\epsilon \to 0$. To do this, choose $p \in (1, \frac{1+\alpha}{1+\theta})$ so small that $1/p \ge 1 + \theta - \alpha q$.
Then
\[
 \Phi_{1,\epsilon}(s,t)^p
 \weq (1+[X]_{\alpha,\infty})^{-\tilde q} \, \frac{ \ind{y \le X_s < y + \epsilon < X_t} }{(t-s)^{1+\tilde\theta}}
 \wle (1+[X]_{\alpha,\infty})^{-\tilde q} \, \frac{ \ind{X_s < y + \epsilon < X_t} }{(t-s)^{1+\tilde\theta}},
\]
where $\tilde q = pq$ and $\tilde \theta = (1+\theta)p - 1$. Our choice of $p$ implies that $\tilde q \ge \tilde \theta/\alpha$ and $\tilde \theta \in (0,\alpha)$. By repeating the argument used in part (i) with $q$ and $\theta$ replaced by $\tilde q$ and $\tilde \theta$, we find that
\[
 \E \int_0^T \int_0^t \Phi_{1,\epsilon}(s,t)^p \, ds dt
 \wle \tilde\theta^{-1} \int_0^T \left(1 + \frac{2}{1-\tilde\theta/\alpha} \hat p_t \right) dt,
\]
where the upper bound on the right is finite and does not depend on $\epsilon$. We may hence conclude that the collection of functions $\Phi_{1,\epsilon}$ indexed by $\epsilon > 0$ is uniformly integrable with respect to $\pr \otimes \Leb^2$, and we obtain~\eqref{eq:LimitPhi1}.

Let us next verify that
\begin{equation}
 \label{eq:LimitPhi2}
 \E \int_0^T \int_0^t \Phi_{2,\epsilon}(s,t) \, ds dt
 \ \to \ 0
\end{equation}
as $\epsilon \to 0$. This is not hard, because
\[
 \Phi_{2,\epsilon}(s,t)
 \wle (1+[X]_{\alpha,\infty})^{-q} \, \frac{ \ind{X_s < y < X_t} }{(t-s)^{1+\theta}}
\]
is valid for all $(\omega,s,t)$. The right side is integrable with respect to $\pr \otimes \Leb^2$, by the argument used in part (i). Hence \eqref{eq:LimitPhi2} follows by Lebesgue's dominated convergence theorem. By substituting the limits \eqref{eq:LimitPhi1}--\eqref{eq:LimitPhi2} into the representation~\eqref{eq:RightLimit}, we may conclude that $y \mapsto \phi(y)$ is right continuous.

(iii) Let us next verify that $\phi$ is left-continuous at $y \in \R$. Fix $\epsilon > 0$ and observe that
\[
 \ind{X_s < y-\epsilon < X_t} - \ind{X_s < y < X_t}
 \weq \ind{X_s < y - \epsilon < X_t \le y} - \ind{y-\epsilon \le X_s < y < X_t}.
\]
Therefore,
\begin{equation}
 \label{eq:LeftLimit}
 \phi(y-\epsilon) - \phi(y)
 \weq \E \int_0^T \int_0^t \Psi_{1,\epsilon}(s,t) \, ds dt - \E \int_0^T \int_0^t \Psi_{2,\epsilon}(s,t) \, ds dt,
\end{equation}
where
\begin{align*}
 \Psi_{1,\epsilon}(s,t)
 &\weq (1+[X]_{\alpha,\infty})^{-q} \, \frac{ \ind{X_s < y - \epsilon < X_t \le y} }{(t-s)^{1+\theta}}, \\
 \Psi_{2,\epsilon}(s,t)
 &\weq (1+[X]_{\alpha,\infty})^{-q} \, \frac{ \ind{y-\epsilon \le X_s < y < X_t} }{(t-s)^{1+\theta}}.
\end{align*}
By the same arguments as in part (ii) of the proof, we see that as $\epsilon \to 0$,
\[
 \Psi_{1,\epsilon}(s,t) \ \to \ 0
 \quad \text{and} \quad
 \Psi_{2,\epsilon}(s,t) \ \to \ 0
\]
for $\pr \otimes \Leb^2$ -almost every $(\omega,s,t)$. We also see that the collection of functions $\Psi_{1,\epsilon}$ indexed by $\epsilon > 0$ is uniformly integrable with respect to $\pr \otimes \Leb^2$, and that $\Psi_{2,\epsilon}(s,t)$ is bounded from above by a $\pr \otimes \Leb^2$ -integrable function. Hence,
\[
 \E \int_0^T \int_0^t \Psi_{1,\epsilon}(s,t) \, ds dt
 \ \to \ 0
 \quad \text{and} \quad
 \E \int_0^T \int_0^t \Psi_{2,\epsilon}(s,t) \, ds dt
 \ \to \ 0,
\]
and the left-continuity of $\phi$ follows from the representation~\eqref{eq:LeftLimit}.

(iv) Let us finally show that the function
\[
 \tilde \phi(y)
 \weq
 \E \left\{ (1+[X]_{\alpha,\infty})^{-q} \int_0^T \int_0^t \frac{\ind{X_t < y < X_s}}{(t-s)^{1+\theta}} \, ds dt \right\}
\]
is bounded and continuous. To do this, define another random process $\tilde X$ by $\tilde X_t = - X_t$. Then $\tilde X$ has $\alpha$-\Holder continuous paths with $[\tilde X]_{\alpha,\infty} = [X]_{\alpha,\infty}$, and we may represent $\tilde \phi$ as
\[
 \tilde \phi(y)
 \weq
 \E \left\{ (1+[\tilde X]_{\alpha,\infty})^{-q} \int_0^T \int_0^t \frac{\ind{\tilde X_s < -y < \tilde X_t}}{(t-s)^{1+\theta}} \, ds dt \right\}.
\]
The boundedness and continuity of $\tilde \phi$ now follow from the boundedness and continuity of $\phi$.
\end{proof}

\begin{proof}[Proof of Proposition~\ref{the:SmoothGagliardo}]
(i) Let us first assume that $f$ right-continuous, increasing, and bounded.
Let $f_n$ be the approximation of $f$ defined by \eqref{eq:Mollification}. Denote
\[
 Z_n(s,t) \weq \frac{f_n(X_t) - f_n(X_s)}{(t-s)^{1+\theta}}
 \quad\text{and}\quad
 Z(s,t) \weq \frac{f(X_t) - f(X_s)}{(t-s)^{1+\theta}}.
\]
Observe that
\[
 \E \int_0^T \int_0^t \frac{|Z_n(s,t)|}{(1+[X]_{\alpha,\infty})^q} ds dt
 \weq \int_{\R} h(y) \, \mu_{f_n}(dy)
\]
where $\mu_{f_n}(dy) = f_n'(y) dy$ is the Lebesgue--Stieltjes measure of $f_n$, and
\[
 h(y)
 \weq \E \left\{ (1+[X]_{\alpha,\infty})^{-q} \int_0^T \int_0^t \frac{\ind{X_s < y < X_t} + \ind{X_t < y < X_s}}{(t-s)^{1+\theta}} \, ds dt \right\}
\]
is continuous by Lemma~\ref{the:MeanContinuity}. Hence, by Lemma~\ref{the:SmoothLS},
\[
 \int_{\R} h(y) \, \mu_{f_n}(dy)
 \ \to \ \int_{\R} h(y) \, \mu_{f}(dy),
\]
from which we conclude that
\[
 \E \int_0^T \int_0^t \frac{|Z_n(s,t)|}{(1+[X]_{\alpha,\infty})^q} ds dt
 \ \to \
  \E \int_0^T \int_0^t \frac{|Z(s,t)|}{(1+[X]_{\alpha,\infty})^q} ds dt.
\]
This observation combined with the fact that $Z_n(s,t) \to Z(s,t)$ for $\pr \otimes \Leb^2$ -almost every $(\omega,s,t)$, allows us to conclude \cite[Lemma 1.32]{Kallenberg_2002} that
\[
 \E \int_0^T \int_0^t \frac{|Z_n(s,t)-Z(s,t)|}{(1+[X]_{\alpha,\infty})^q} ds dt
 \ \to \ 0.
\]
As a consequence,
\[
 (1+[X]_{\alpha,\infty})^{-q} \int_0^T \int_0^t |Z_n(s,t)-Z(s,t)| ds dt
 \ \pto \ 0,
\]
and therefore also
\[
 [f_n \circ X - f \circ X]_{\theta,1}
 \weq 2 \int_0^T \int_0^t |Z_n(s,t)-Z(s,t)| ds dt
 \ \pto \ 0.
\]

(ii) Let us next assume that $f$ is right-continuous and of finite variation. Then we may write $f=f^{(1)}-f^{(2)}$ where $f^{(i)}$ are right-continuous, increasing, and bounded. The mollification of $f$ can be written as $f_n = f^{(1)}_n - f^{(2)}_n$. Hence
\begin{align*}
 [f_n \circ X - f \circ X]_{\theta,1}
 &\weq \left[(f^{(1)}_n \circ X - f^{(1)} \circ X) + (f^{(2)}_n \circ X - f^{(2)} \circ X) \right]_{\theta,1} \\
 &\wle \left[(f^{(1)}_n \circ X - f^{(1)} \circ X) \right]_{\theta,1} +  \left[ (f^{(2)}_n \circ X - f^{(2)} \circ X) \right]_{\theta,1}.
\end{align*}
By part (i), both terms on the right tend to zero in probability. Hence the claim is true also in this case.

(iii) Let us next assume that $f$ is right-continuous and of locally finite variation. Let $A_m = \{||X||_\infty \le m\}$ be the event that the path of $X$ is bounded by $m$. Define $f^{(m)}$ by
\[
 f^{(m)}(y)
 \weq 
 \begin{cases}
  f(-m), & \quad y < -m, \\
  f(y), & \quad |y| \le m, \\
  f(m), &\quad y > m.
 \end{cases}
\]
Then $f^{(m)}$ is right-continuous and of finite variation. Moreover, on the event $A_m$, $f \circ X = f^{(m)} \circ X = f^{(m+1)} \circ X$ and $f_n \circ X = f^{(m+1)}_n \circ X$ for all $n \ge 1$. Hence, as a consequence of part (ii), we see that for any $m$ and any $\epsilon>0$,
\begin{align*}
 \pr( [f_n \circ X - f \circ X]_{\theta,1} > \epsilon, A_m)
 &\weq \pr( [f^{(m+1)}_n \circ X - f^{(m+1)} \circ X]_{\theta,1} > \epsilon, A_m) \\
 &\wle \pr( [f^{(m+1)}_n \circ X - f^{(m+1)} \circ X]_{\theta,1} > \epsilon) \\
 & \ \to \ 0
\end{align*}
as $n \to \infty$. We conclude that
\[
 1_{A_m}  [f_n \circ X - f \circ X]_{\theta,1}
 \ \pto \ 0
\]
for all $m \ge 1$. Because $\pr( \cup_m A_m ) = \pr( ||X||_\infty < \infty) = 1$, it follows (Lemma~\ref{the:ConvergenceInProbability}) that $[f_n \circ X - f \circ X]_{\theta,1} \prto 0$, so the claim holds also in this case.
\end{proof}

\section{Stochastic continuity of finite variation functions}

The following result establishes a stochastic continuity property of a finite variation function $f$ evaluated at two random inputs $X_1$ and $X_2$ which are close to each other in expectation, and such that the probability distribution of $X_1$ admits a density function $p_1$. The result can be seen as an extension of \cite[Theorem 2.4]{Avikainen_2009} into a setting where $p_1$ is not globally bounded.

Below the requirement that $f$ is right-continuous can be relaxed by similar arguments as in the proof of \cite[Theorem 2.4]{Avikainen_2009}. The result stated here suffices for the purposes of this paper.

\begin{proposition}
\label{the:WeakContinuity}
Let $1 \le p \le q < \infty$ and $\epsilon \ge 0$, and assume that $f: \R \to \R$ is a right-continuous function of finite variation such that the restriction of $f$ into $(-\epsilon,\epsilon)$ is Lipschitz continuous. Let $X_1$ be random variable which admits a probability density function $p_1(x)$. Then
\[
 \E| f(X_1) - f(X_2)|^p
 \wle c \, d_1(f) (1+d_2(p_1)) \bigg( \E |X_1-X_2|^q \bigg)^{\frac{1}{1+q}}
\]
for all random variables $X_2$ such that $\E|X_1-X_2|^q \le 1$, where $c= 2^{p-1+q/(1+q)} (1+q)^{1/(1+q)})$,
\[
 d_1(f) \weq
 \begin{cases}
 V(f)^p, &\quad \epsilon = 0, \\
 V(f)^p + \big(\sup_{|x| < \epsilon} |f'(x)| \big)^p, &\quad \epsilon > 0,
 \end{cases}
\]
and
\[
 d_2(p_1) \weq
 \begin{cases}
  \bigg( \sup_{x \in \R} p_{1}(x) \bigg)^{\frac{q}{q+1}}, &\quad \epsilon = 0,\\
  \bigg( 2 \epsilon^{-1} + \sup_{|x| > \epsilon/2} p_{1}(x) \bigg)^{\frac{q}{q+1}}, &\quad \epsilon > 0.
  \end{cases}
\]
\end{proposition}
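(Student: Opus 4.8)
The plan is to follow the strategy behind Avikainen's theorem, the extra difficulty being that the density of $X_1$ is controlled only away from the origin. First I would record the elementary consequence of finite variation that $|f(b) - f(a)| \le \mu_f\big([a\wedge b,\ a\vee b]\big)$ for every right-continuous $f$ of finite variation with variation measure $\mu_f$ (the Lebesgue--Stieltjes increment of $f$ over an interval is dominated in modulus by its total-variation measure on that interval). Since $\mu_f$ has total mass $V(f) < \infty$ and $p \ge 1$, this yields the pointwise bound $|f(X_1) - f(X_2)|^p \le V(f)^{p-1}\,\mu_f\big([X_1\wedge X_2,\ X_1\vee X_2]\big)$ in the case $\epsilon = 0$. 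Taking expectations and applying Fubini, the problem reduces to estimating $\int_\R \pr\big(y \in [X_1\wedge X_2,\ X_1\vee X_2]\big)\,\mu_f(dy)$, and since $X_1$ has a density, for each fixed $y$ the event $\{y \in [X_1\wedge X_2, X_1\vee X_2]\}$ is, up to a null set, contained in $\{|X_1 - y| \le |X_1 - X_2|\}$.

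The core estimate is then, for any $\eta > 0$,
\[
 \pr\big(|X_1 - y| \le |X_1 - X_2|\big)
 \ \le\ \pr\big(|X_1 - y| \le \eta\big) + \pr\big(|X_1 - X_2| > \eta\big)
 \ \le\ 2\eta\, \sup_x p_1(x) + \eta^{-q}\,\E|X_1-X_2|^q,
\]
using the boundedness of $p_1$ for the first term and Markov's inequality for the second. When $\epsilon = 0$ there is no restriction on $\eta$, so choosing it to balance the two contributions (so that $\eta$ is of order $\big(\E|X_1-X_2|^q/\sup_x p_1\big)^{1/(1+q)}$) produces a bound of the shape $c\,(\sup_x p_1)^{q/(1+q)}\,(\E|X_1-X_2|^q)^{1/(1+q)}$; integrating against $\mu_f$ and multiplying by $V(f)^{p-1}$ then gives the assertion, and fixing the exact constant $c$ is a matter of optimizing over $\eta$.

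For $\epsilon > 0$ I would first split off the Lipschitz part. Since $f$ restricted to $(-\epsilon,\epsilon)$ is Lipschitz with constant $L := \sup_{|x|<\epsilon}|f'(x)|$, it extends continuously to $[-\epsilon,\epsilon]$; let $g$ be the globally $L$-Lipschitz function obtained from this extension together with setting $g$ constant on $(-\infty,-\epsilon]$ and on $[\epsilon,\infty)$, and put $r = f - g$. Then $r$ is right-continuous of finite variation with $V(r) \le V(f)$, and $r$ vanishes on $(-\epsilon,\epsilon)$, so its variation measure $\mu_r$ is carried by $\{|y| \ge \epsilon\}$. By $(u+v)^p \le 2^{p-1}(u^p+v^p)$,
\[
 |f(X_1) - f(X_2)|^p
 \ \le\ 2^{p-1}\Big( L^p |X_1 - X_2|^p + |r(X_1) - r(X_2)|^p \Big).
\]
The first term is handled directly: since $1 \le p \le q$ and $\E|X_1-X_2|^q \le 1$, Jensen's inequality gives $\E|X_1-X_2|^p \le (\E|X_1-X_2|^q)^{p/q} \le (\E|X_1-X_2|^q)^{1/(1+q)}$, which accounts for the $(\sup_{|x|<\epsilon}|f'(x)|)^p$ term of $d_1(f)$. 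For the second term I would rerun the argument of the previous paragraph with $\mu_f$ replaced by $\mu_r$: the point is that, because $\mu_r$ is carried by $\{|y| \ge \epsilon\}$, for any $y$ in its support and any $\eta \le \epsilon/2$ the interval $(y-\eta, y+\eta)$ stays inside $\{|x| \ge \epsilon/2\}$, where $p_1 \le \sup_{|x| > \epsilon/2}p_1(x) =: M'$, so the bound $\pr(|X_1-y|\le\eta) \le 2\eta M'$ is still available.

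The main obstacle is precisely that the balancing value of $\eta$ need not obey the constraint $\eta \le \epsilon/2$, and resolving this is where the quantity $d_2(p_1)$ and the constant $c$ come from. I would dispatch it by a case distinction according to the size of $\E|X_1-X_2|^q$ relative to $(\epsilon/2)^q$: when $\E|X_1-X_2|^q > (\epsilon/2)^q$ one simply uses $\pr\big(y\in[X_1\wedge X_2, X_1\vee X_2]\big) \le 1$ and observes that $1 \le (2/\epsilon)^{q/(1+q)}(\E|X_1-X_2|^q)^{1/(1+q)}$ in that range; when $\E|X_1-X_2|^q \le (\epsilon/2)^q$ one takes $\eta = \min\{\epsilon/2,\ (\E|X_1-X_2|^q/M')^{1/(1+q)}\}$, and in the sub-case where this minimum equals $\epsilon/2$ the surviving Markov term $(2/\epsilon)^q\E|X_1-X_2|^q$ is converted into $(2/\epsilon)^{q/(1+q)}(\E|X_1-X_2|^q)^{1/(1+q)}$ exactly by invoking $\E|X_1-X_2|^q \le (\epsilon/2)^q$. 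Combining these bounds produces the constant $2\epsilon^{-1} + \sup_{|x|>\epsilon/2}p_1(x)$ raised to the power $q/(1+q)$, and collecting the factor $2^{p-1}$ together with the factors from the $\eta$-optimization gives $c$; reassembling the Lipschitz contribution with the $\mu_r$ contribution and using $V(r)^{p-1}\le V(f)^{p-1}$ finishes the argument.
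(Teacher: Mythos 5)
Your argument is correct, and it reaches the stated bound by a genuinely different route than the paper for the key probabilistic estimate. The initial decomposition is essentially the same in both proofs: you peel off a globally Lipschitz piece $g$ supported near the origin and work with $r=f-g$, whose variation measure lives on $\{|y|\ge\epsilon\}$, while the paper splits the single integral $\int[1_{(x_1,x_2]}(y)+1_{(x_2,x_1]}(y)]\,\mu_f(dy)$ into its contributions from $(-\epsilon,\epsilon)$ and its complement; after Jensen and Fubini both reduce to bounding $\pr\bigl(y\in[X_1\wedge X_2,\,X_1\vee X_2]\bigr)$ for $|y|\ge\epsilon$. The divergence is in how that probability is controlled. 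The paper invokes Lemma~\ref{the:LipschitzCdf}, proved via quantile functions and the rearrangement-type Lemma~\ref{the:ConditionalExpectation}, which bounds $\E|1(X_1\ge y)-1(X_2\ge y)|$ by the \emph{local} Lipschitz constant $b(y)$ of the cdf of $X_1$ at $y$ raised to the power $q/(1+q)$; the bound $b(y)\le 2\epsilon^{-1}+\sup_{|x|>\epsilon/2}p_1(x)$ then handles both the near and far scales in one stroke, with no case analysis. You instead use the elementary inclusion $\{y\in[X_1\wedge X_2,X_1\vee X_2]\}\subset\{|X_1-y|\le\eta\}\cup\{|X_1-X_2|>\eta\}$, the density bound, and Markov's inequality, optimizing over $\eta$; this is more self-contained (it bypasses both auxiliary lemmas) but forces the explicit case distinction on whether the balancing $\eta$ exceeds $\epsilon/2$, which you carry out correctly. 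The only caveat is cosmetic: the threshold-plus-Markov optimization yields the factor $(1+q)q^{-q/(1+q)}$ rather than $(1+q)^{1/(1+q)}$, and your case analysis introduces further absolute factors, so you prove the proposition with a somewhat larger numerical constant than the $c$ displayed in the statement. Since only the finiteness and the functional form of $c$, $d_1$, $d_2$ are used downstream (in Lemma~\ref{the:Key} and Theorem~\ref{the:RateMean}), this does not affect anything.
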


The proof of Proposition~\ref{the:WeakContinuity} is based on the following two auxiliary results.

\begin{lemma}
\label{the:ConditionalExpectation}
Let $X$ be a real-valued random variable and let $A,B$ be events defined on the same probability space as $X$.
\begin{enumerate}[(i)]
\item \label{ite:ConditionalExpectationIncreasing} If $\phi$ is positive and increasing, then
\[
 \E \phi(X) 1_{A \cap B} \wge \E \phi(X) 1_{A \cap \{X \le b\}}
\]
whenever $\pr(A,B) \ge \pr(A,X \le b)$.
\item \label{ite:ConditionalExpectationDecreasing} If $\phi$ is positive and decreasing, then
\[
 \E \phi(X) 1_{A \cap B} \wge \E \phi(X) 1_{A \cap \{X > b\}}
\]
whenever $\pr(A,B) \ge \pr(A,X > b)$.
\end{enumerate}
The above bounds are valid also when in (i) $\{X \le b\}$ is replaced by $\{X < b\}$, and when in (ii) $\{X > b\}$ is replaced by $\{X \ge b\}$.
\end{lemma}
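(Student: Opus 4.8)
The plan is to prove both items by the same elementary comparison: the level $b$ splits the real line into a region $\{X > b\}$ on which $\phi(X) \ge \phi(b)$ and a region $\{X \le b\}$ on which $\phi(X) \le \phi(b)$ (for increasing $\phi$; the roles reverse for decreasing $\phi$), so that among all events $B$ with $\pr(A,B)$ at least $\pr(A, X\le b)$, replacing $B$ by the threshold event $\{X\le b\}$ inside $A$ can only move $\phi(X)$-mass towards where $\phi$ is smaller. No machinery beyond indicator bookkeeping and monotonicity of $\phi$ is needed.

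For item (i) I would start from the indicator identity $1_{A\cap B} - 1_{A\cap\{X\le b\}} = 1_{A\cap B\cap\{X>b\}} - 1_{A\cap B^c\cap\{X\le b\}}$, multiply by $\phi(X)$, and take expectations, obtaining
\[
 \E \phi(X) 1_{A \cap B} - \E \phi(X) 1_{A \cap \{X \le b\}}
 \weq \E \phi(X) 1_{A \cap B \cap \{X > b\}} - \E \phi(X) 1_{A \cap B^c \cap \{X \le b\}}.
\]
Since $\phi$ is increasing, on $\{X > b\}$ we have $\phi(X) \ge \phi(b)$ and on $\{X \le b\}$ we have $\phi(X) \le \phi(b)$, so the right-hand side is at least $\phi(b)\big(\pr(A, B, X>b) - \pr(A, B^c, X\le b)\big)$; positivity of $\phi$ is exactly what permits pulling out the factor $\phi(b)\ge0$ while preserving the inequality. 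Decomposing both $A\cap B$ and $A\cap\{X\le b\}$ over the partition $\{X>b\}, \{X\le b\}$ shows the bracket equals $\pr(A,B) - \pr(A,X\le b)$, which is nonnegative by hypothesis, and item (i) follows.

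Item (ii) is the mirror image: I would use $1_{A\cap B} - 1_{A\cap\{X>b\}} = 1_{A\cap B\cap\{X\le b\}} - 1_{A\cap B^c\cap\{X>b\}}$, and since $\phi$ is now decreasing, $\phi(X)\ge\phi(b)$ on $\{X\le b\}$ and $\phi(X)\le\phi(b)$ on $\{X>b\}$; the same decomposition then reduces the lower bound to $\phi(b)\big(\pr(A,B) - \pr(A,X>b)\big)\ge0$. The stated variants ($\{X<b\}$ in (i), $\{X\ge b\}$ in (ii)) go through verbatim, because shifting the endpoint $b$ from one side of the inequality to the other changes only whether the single point $\{X=b\}$, where $\phi(X)=\phi(b)$, is attached to the ``$\ge\phi(b)$'' region or the ``$\le\phi(b)$'' region, and this does not affect any of the comparisons used above.

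I do not anticipate a genuine obstacle; the argument is a two-line rearrangement. The only points requiring minor care are verifying the set-algebra identities $\pr(A,B,X>b) - \pr(A,B^c,X\le b) = \pr(A,B) - \pr(A,X\le b)$ and its analogue in (ii), and remembering to invoke positivity of $\phi$ at the step where $\phi(b)$ is factored out of the expectations (so that no assumption about integrability of $\phi(X)$ is needed, all quantities being valued in $[0,\infty]$).
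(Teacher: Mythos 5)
Your proof is correct and follows essentially the same route as the paper: the identical indicator decomposition $1_{A\cap B}-1_{A\cap C}=1_{A\cap B\cap C^c}-1_{A\cap B^c\cap C}$ with $C=\{X\le b\}$, bounding $\phi(X)$ by $\phi(b)$ from below on $C^c$ and from above on $C$, and reducing to $\phi(b)\,[\pr(A,B)-\pr(A,C)]\ge 0$. Your remarks on the mirror case (ii) and on the strict/non-strict threshold variants match what the paper leaves implicit.
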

\begin{proof}
Assume that $\phi$ is positive and increasing. Denote $C = \{X \le b\}$. Then the first claim follows after observing that
\begin{align*}
 \E \phi(X) 1_{A \cap B} - \E \phi(X) 1_{A \cap C}
 &\weq \E \phi(X) 1_{A \cap B \cap C^c} - \E \phi(X) 1_{A \cap B^c \cap C} \\
 &\wge \phi(b) \pr(A, B, C^c) - \phi(b) \pr(A, B^c, C) \\
 &\weq \phi(b) [ \pr(A,B) - \pr(A,C)].
\end{align*}

The second claim follows by observing that when $\phi$ is positive and decreasing, the above inequality is valid for $C = \{X > b\}$.
\end{proof}

\begin{lemma}
\label{the:LipschitzCdf}
Let $X$ be a real-valued random variable with a continuous cumulative distribution function $F$. Then for any $a \in \R$, any $p \in [0,\infty)$, and any real-valued random variable $Y$ defined on the same probability space as $X$,
\[
 \E | 1(X > a) - 1(Y > a) |
 \wle c \, b(a)^{\frac{p}{1+p}} \left( \E |X-Y|^p \right)^{\frac{1}{p+1}},
\]
where
\[
 b(a)
 \ = \ \sup_{x: x \ne a} \frac{|F(x)-F(a)|}{|x-a|}.
\]
and $c = 2^{p/(1+p)} (1+p)^{1/(1+p)}$. Moreover, the same upper bound is also valid for $\E | 1(X \ge a) - 1(Y \ge a) |$.
\end{lemma}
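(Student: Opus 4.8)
The plan is to reduce the left-hand side to the probability of the ``disagreement event'' $B = \{1(X>a) \neq 1(Y>a)\}$. Since the two indicators are $\{0,1\}$-valued, $|1(X>a)-1(Y>a)| = 1_B$ and hence $\E|1(X>a)-1(Y>a)| = \pr(B)$. First I would record the elementary betweenness fact: on $B$ the point $a$ lies (weakly) between $X$ and $Y$. Indeed, if $X>a\ge Y$ then $X-Y=(X-a)+(a-Y)\ge X-a>0$, while if $X\le a<Y$ then $Y-X=(Y-a)+(a-X)\ge a-X\ge 0$; in either case $|X-a|\le|X-Y|$. As $p\ge 0$ this gives $|X-a|^p 1_B\le |X-Y|^p$ pointwise, hence
\[
 \E\big[ |X-a|^p 1_B \big] \wle \E |X-Y|^p .
\]
If $p=0$, or $b(a)=\infty$, or $\E|X-Y|^p=\infty$, the asserted inequality is trivial, so I would dispose of these cases at the outset and henceforth assume $p>0$, $b(a)<\infty$ and $\E|X-Y|^p<\infty$ (note $b(a)>0$ automatically, since $F$ cannot be constant).

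The heart of the argument is a sharp lower bound for $\E[|X-a|^p 1_B]$ in terms of $q:=\pr(B)$ and $b(a)$. Because $F$ is continuous, $\pr(|X-a|\le s)=F(a+s)-F(a-s)$ for $s\ge 0$, and by the definition of $b(a)$ this is at most $b(a)s+b(a)s=2b(a)s$. Writing $W=|X-a|$ and using the layer-cake identity $\E[W^p 1_B]=\int_0^\infty p\,s^{p-1}\,\pr(W>s,\,B)\,ds$ together with
\[
 \pr(W>s,\,B)\wge\big(\pr(B)-\pr(W\le s)\big)_+\wge\big(q-2b(a)s\big)_+ ,
\]
I would then compute
\[
 \E\big[ |X-a|^p 1_B \big]\wge\int_0^{q/(2b(a))}\big(q-2b(a)s\big)\,p\,s^{p-1}\,ds\weq\frac{q^{\,p+1}}{(p+1)\,(2b(a))^{p}} .
\]

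Combining the two displays gives $q^{\,p+1}\le (p+1)(2b(a))^{p}\,\E|X-Y|^p$, and solving for $q$ yields
\[
 \pr(B)\wle\big((p+1)(2b(a))^{p}\,\E|X-Y|^{p}\big)^{1/(p+1)}\weq 2^{p/(1+p)}(1+p)^{1/(1+p)}\,b(a)^{p/(1+p)}\big(\E|X-Y|^p\big)^{1/(1+p)} ,
\]
which is exactly the claimed bound with $c=2^{p/(1+p)}(1+p)^{1/(1+p)}$. For the ``$\ge$'' version one repeats the argument verbatim with $B$ replaced by $\{1(X\ge a)\neq 1(Y\ge a)\}$: the betweenness argument still forces $|X-a|\le|X-Y|$ on this event, and $\pr(|X-a|\le s)$ is unchanged, so the identical estimate goes through.

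The only real obstacle is obtaining the sharp constant. A crude union bound $1_B\le 1(|X-a|\le t)+1(|X-Y|>t)$, followed by Markov's inequality and optimization over $t$, already proves a bound of the stated shape, but with the slightly larger constant $(2/p)^{p/(1+p)}(1+p)$; recovering the stated $c$ requires the quantile/layer-cake lower bound for $\E[|X-a|^p 1_B]$ above rather than a single-threshold split. Everything else is routine, apart from the bookkeeping of the degenerate cases flagged at the start.
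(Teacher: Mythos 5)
Your proof is correct, and it reaches the stated constant $c = 2^{p/(1+p)}(1+p)^{1/(1+p)}$ by a genuinely different route than the paper. The paper first splits $\E|1(X>a)-1(Y>a)| = \pr(Y\le a<X)+\pr(X\le a<Y)$, treats each one-sided event separately via a rearrangement-type lemma (replacing, say, $\{Y\le a<X\}$ by an interval event $\{a<X\le b\}$ of equal probability that minimizes $\E(X-a)^p 1_{(\cdot)}$), computes the resulting lower bound through the quantile function $Q$, and finally recombines the two pieces with Jensen's inequality and the estimate $c_a\wedge\hat c_a\ge b(a)^{-1}$. You instead work directly with the symmetric disagreement event $B$, observe the pointwise bound $|X-a|\le|X-Y|$ on $B$, and lower-bound $\E[|X-a|^p 1_B]$ by the layer-cake formula combined with $\pr(|X-a|\le s)=F(a+s)-F(a-s)\le 2b(a)s$ (this is where your factor $2$ enters, playing the role of the paper's Jensen step); the integral $\int_0^{q/(2b(a))}(q-2b(a)s)\,p\,s^{p-1}\,ds = q^{p+1}/((p+1)(2b(a))^p)$ then gives exactly the paper's constant. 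Your argument is more elementary — it needs neither the quantile representation nor the auxiliary conditional-expectation lemma (Lemma~\ref{the:ConditionalExpectation}) — and it handles the $\{X\ge a\}$ variant and the degenerate cases ($p=0$, $b(a)=\infty$, infinite moment) cleanly; what it gives up is the slightly sharper intermediate information the paper extracts (separate bounds for the two one-sided probabilities in terms of the one-sided difference quotients $c_a$, $\hat c_a$), which is not needed for the stated conclusion.
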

\begin{proof}
Observe that
\begin{equation}
 \label{eq:LipshitzCdf1}
 \E | 1(X > a) - 1(Y > a) |
 \weq \pr(Y \le a < X) + \pr(X \le a < Y).
\end{equation}
We will derive upper bounds for the probabilities on the right, one by one.

Assume first that $\pr(Y \le a < X) < 1$. Then because the cumulative distribution function of $X$ is continuous, we may fix a number $b \in [a,\infty)$ such that $\pr(Y \le a < X) = \pr(a < X \le b)$. Then by applying Lemma~\ref{the:ConditionalExpectation}:\eqref{ite:ConditionalExpectationIncreasing} with $\phi(x) = (x-a)^p 1(x > a)$, $A = \{X > a\}$ and $B = \{Y \le a\}$ we find that
\begin{align*}
 \E |X-Y|^p 1(Y \le a<X)
 &\wge \E (X-a)^p 1(Y \le a < X) \\
 &\wge \E (X-a)^p 1(a < X \le b) \\
 &\weq \E (X-a)^p 1_{(a,b]}(X).
\end{align*}
Let $Q$ be a quantile function of $X$, and note \cite[Section A.3]{Follmer_Schied_2004} that
\begin{align*}
 \E (X-a)^p 1_{(a,b]}(X)
 &\weq \int_{F(a)}^{F(b)} (Q(u)-a)^p \, du \\
 &\weq \int_{F(a)}^{F(b)} \left( \frac{Q(u)-a}{u-F(a)} \right)^p   (u-F(a))^p \, du \\
 &\wge c_a^p \int_{F(a)}^{F(b)} (u-F(a))^p \, du \\
 &\weq c_a^p \, \frac{(F(b)-F(a))^{p+1}}{p+1}
 \weq c_a^p \, \frac{\pr(a < X \le b)^{p+1}}{p+1}
\end{align*}
where
\[
 c_a = \inf_{u \in (F(a),1)} \left( \frac{Q(u)-a}{u-F(a)} \right).
\]
After recalling that $\pr(Y \le a < X) = \pr(a < X \le b)$, we may conclude that
\begin{equation}
 \label{eq:LipshitzCdf2}
 \E |X-Y|^p 1(Y \le a<X)
 \wge
 c_a^p \, \frac{\pr(Y \le a < X)^{p+1}}{p+1}.
\end{equation}
The above inequality is also true in the case with $\pr(Y \le a < X) = 1$, in which case we choose $b=\infty$ and replace $F(b)$ by 1 in the above derivation.

To obtain an upper bound for the second term on the right side of \eqref{eq:LipshitzCdf1}, assume that $\pr(X \le a < Y) < 1$. The by the continuity of the cdf of $X$, we may fix a number $c \le a$ such that $\pr(X \le a < Y) = \pr(c < X \le a)$. Then by applying Lemma~\ref{the:ConditionalExpectation}:\eqref{ite:ConditionalExpectationDecreasing} with $\phi(x) = (a-x)^p 1(x \le a)$, $A = \{X \le a\}$ and $B = \{Y > a\}$ we find that
\begin{align*}
 \E |X-Y|^p 1(X \le a < Y)
 &\wge \E (a-X)^p 1(X \le a < Y) \\
 &\wge \E (a-X)^p 1(c < X \le a) \\
 &\weq \E (a-X)^p 1_{(c,a]}(X).
\end{align*}
A similar computation as above using the quantile function shows that
\begin{align*}
 \E (a-X)^p 1_{(c,a]}(X)
 &\weq \int_{F(c)}^{F(a)} (a-Q(u))^p \, du\\
 &\weq \int_{F(c)}^{F(a)} \left( \frac{a-Q(u)}{F(a)-u} \right)^p (F(a)-u)^p \, du \\
 &\wge \hat c_a^p \int_{F(c)}^{F(a)} (F(a)-u)^p \, du \\
 &\weq \hat c_a^p \, \frac{(F(a)-F(c))^{p+1}}{p+1}
 \weq \hat c_a^p \, \frac{\pr(c < X \le a)^{p+1}}{p+1}
\end{align*}
where
\[
 \hat c_a = \inf_{u \in (0,F(a))} \left( \frac{a-Q(u)}{F(a)-u} \right).
\]
After recalling that $\pr(X \le a < Y) = \pr(c < X \le a)$, we may conclude that
\begin{equation}
 \label{eq:LipshitzCdf3}
 \E |X-Y|^p 1(X \le a<Y)
 \wge
 \hat c_a^p \, \frac{\pr(X \le a < Y)^{p+1}}{p+1}.
\end{equation}
Again, the above inequality is also true in the case with $\pr(X \le a < Y) = 1$, in which case we choose $c=-\infty$ and replace $F(c)$ by 0 in the above derivation.

By combining \eqref{eq:LipshitzCdf2}--\eqref{eq:LipshitzCdf3}, then applying Jensen's inequality $( \frac{x+y}{2} )^{1+p} \le \frac{x^{1+p} + y^{1+p}}{2}$, and then equation \eqref{eq:LipshitzCdf1}, we find that
\begin{align*}
 \E |X-Y|^p
 &\wge c_a^p \, \frac{\pr(Y \le a < X)^{p+1}}{p+1} + \hat c_a^p \, \frac{\pr(X \le a < Y)^{p+1}}{p+1} \\
 &\wge 2^{-p} (c_a \wedge \hat c_a)^p (p+1)^{-1} (\pr(Y \le a < X) + \pr(X \le a < Y))^{p+1} \\
 &\weq 2^{-p} (c_a \wedge \hat c_a)^p (p+1)^{-1} (\E | 1(X>a) - 1(Y>a) | )^{p+1}.
\end{align*}
To conclude the claim, it suffices \cite[Section A.3]{Follmer_Schied_2004} to note that
\[
 c_a \wedge \hat c_a
 \wge \inf_{u \in (0,1): u \ne F(a)} \frac{|a-Q(u)|}{|F(a)-u|}
 \wge \left( \sup_{x: x \ne a} \frac{|F(x)-F(a)|}{|x-a|}\right)^{-1}.
\]

By replicating the same proof with obvious modifications, one can verify that the same upper bound valid also for $\E | 1(X \ge a) - 1(Y \ge a) |$.
\end{proof}

\begin{proof}[Proof of Proposition \ref{the:WeakContinuity}]
Assume first that $\epsilon > 0$. Denote by $\mu$ the variation measure of $f$. Note first that $|f(x_2)-f(x_1)| \le \mu(x_1,x_2]$ for $x_1 \le x_2$ and $|f(x_2)-f(x_1)| \le \mu(x_2,x_1]$ for $x_1 > x_2$. We may write this estimate as
\[
 |f(x_2)-f(x_1)|
 \wle \int_\R [1_{(x_1,x_2]}(y) + 1_{(x_2,x_1]}(y) ] \, \mu(dy).
\]
We will split the above integral into two parts by expressing the above estimate as
\[
 |f(x_2)-f(x_1)|
 \wle I_1(x_1,x_2) + I_2(x_1,x_2),
\]
where 
\[
 I_1(x_1,x_2)
 \weq \int_{(-\epsilon,\epsilon)} [1_{(x_1,x_2]}(y) + 1_{(x_2,x_1]}(y) ] \, \mu(dy)
\]
and
\[
 I_2(x_1,x_2)
 \weq \int_{\R \setminus (-\epsilon,\epsilon)} [1_{(x_1,x_2]}(y) + 1_{(x_2,x_1]}(y) ] \, \mu(dy).
\]
By the Lipschitz continuity assumption we know that $f$ has a derivative $f'$ at almost every point in $(-\epsilon,\epsilon)$ bounded by a constant $c_1(f)$. The variation measure of $f$ satisfies $\mu(dy) = |f'(y)|dy$ on $(-\epsilon,\epsilon)$, so that
\begin{align*}
 I_1(x_1,x_2)
 \wle c_1(f) \int_{-\infty}^\infty [1_{(x_1,x_2]}(y) + 1_{(x_2,x_1]}(y) ] \, dy 
 \weq c_1(f) |x_1-x_2|,
\end{align*}
and
\begin{equation}
 \label{eq:I1Bound}
 \E I_1(X_1,X_2)^p
 \wle c_1(f)^p \, \E |X_1-X_2|^p
 \wle c_1(f)^p \, \left( \E |X_1-X_2|^q \right)^{p/q}.
\end{equation}

For the second term, we see by Jensen's inequality that
\begin{align*}
 I_2(x_1,x_2)^p
 &\wle \mu(\R \setminus (-\epsilon,\epsilon))^{p-1} \int_{\R \setminus [-\epsilon,\epsilon]} [1_{(x_1,x_2]}(y) + 1_{(x_2,x_1]}(y) ]^p \, \mu(dy) \\
 &\wle \mu(\R)^{p-1} \int_{\R \setminus (-\epsilon,\epsilon)} [1_{(x_1,x_2]}(y) + 1_{(x_2,x_1]}(y) ]^p \, \mu(dy) \\
 &\weq V(f)^{p-1} \int_{\R \setminus (-\epsilon,\epsilon)} [1_{(x_1,x_2]}(y) + 1_{(x_2,x_1]}(y) ] \, \mu(dy).
\end{align*}
Observe next that $1_{(x_1,x_2]}(y) + 1_{(x_2,x_1]}(y) = |1(x_2\ge y) - 1(x_1 \ge y)|$. Hence
\[
 \E I_2(X_1,X_2)^p
 \wle V(f)^{p-1} \int_{\R \setminus (-\epsilon,\epsilon)} \E |1(X_2 \ge y) - 1(X_1 \ge y)| \, \mu(dy).
\]
Now by Lemma~\ref{the:LipschitzCdf},

\[
 \E |1(X_2 \ge y) - 1(X_1 \ge y)|
 \wle c_3 \, b(y)^{\frac{q}{1+q}} \left( \E |X_1-X_2|^q \right)^{\frac{1}{q+1}},
\]
where
\[
 b(y)
 \weq \sup_{x: x \ne y} \frac{|F_1(x)-F_1(y)|}{|x-y|},
\]
and $c_3 = 2^{q/(1+q)} (1+q)^{1/(1+q)}$, and $F_1$ is the cumulative distribution function of $X_1$. We conclude that
\[
 \E I_2(X_1,X_2)^p
 \wle c_3 V(f)^p \left( \sup_{|y| \ge \epsilon} b(y) \right)^{\frac{q}{q+1}} \left( \E |X_1-X_2|^q \right)^{\frac{1}{q+1}}.
\]
Next, observe that
\[
 \frac{|F_1(x)-F_1(y)|}{|x-y|}
 \wle 2 \epsilon^{-1}
\]
for all $y$ and all $x$ such that $|x-y| \ge \epsilon/2$, and 
\[
 \frac{|F_1(x)-F_1(y)|}{|x-y|}
 \wle \sup_{|x| > \epsilon/2} p_1(x)
 \ =: \ c_0(p_1)
\]
for all $|y| \ge \epsilon$ and $|x-y| < \epsilon/2$. Hence $b(y) \le 2\epsilon^{-1} + c_0(p_1)$ for all $|y| \ge \epsilon$.
Therefore,
\begin{equation}
 \label{eq:I2Bound}
 \E I_2(X_1,X_2)^p
 \wle c_3 V(f)^p \left( 2\epsilon^{-1} + c_1(p_1) \right)^{\frac{q}{q+1}} \left( \E |X_1-X_2|^q \right)^{\frac{1}{q+1}}.
\end{equation}

Now by noting that
\[
 \E |f(X_1)-f(X_2)|^p
 \wle 2^{p-1} \bigg\{ \E I_1(X_1,X_2)^p + \E I_2(X_1,X_2)^p \bigg\}
\]
and collecting the terms in \eqref{eq:I1Bound} and \eqref{eq:I2Bound} into the constants $c$, $d_1(f)$, and $d_2(p_2)$ in the statement, we obtain the claim for $\epsilon > 0$.

When $\epsilon=0$, we can adapt the above proof easily to obtain the claim also in this case. It suffices to note that $I_1(X_1,X_2)=0$, and $b(y) \le \sup_{x \in \R} p_1(x)$ for all $y \in \R$.

\end{proof}

\section{Elementary probability theory}

The following elementary result, arguably well known to many, is included here for the reader's convenience.
\begin{lemma}
\label{the:ConvergenceInProbability}
Let $A_1,A_2,\dots$ be events such that $\pr(\cup_k A_k) = 1$. Consider random variables $X,X_1,X_2,\dots$ such that for all $k \ge 1$, $1_{A_k} X_n \prto 1_{A_k} X$ as $n \to \infty$. Then $X_n \prto X$.
\end{lemma}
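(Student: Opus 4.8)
The statement is elementary, so the plan is a short direct argument in two moves: first reduce to finitely many of the $A_k$ using continuity of probability, then handle each piece using the observation that $1_{A_k}X_n \prto 1_{A_k}X$ controls $X_n - X$ on the event $A_k$ itself.

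First I would fix $\epsilon > 0$ and $\eta > 0$. Since the events $B_m = \bigcup_{k=1}^m A_k$ increase to $\bigcup_k A_k$, which has probability $1$ by hypothesis, continuity of probability from below gives $\pr(B_m) \to 1$. Hence I can choose $m$ so large that $\pr(B_m^c) < \eta/2$. This $m$ is now fixed.

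Next I would observe that on the event $A_k$ we have $1_{A_k}X_n = X_n$ and $1_{A_k}X = X$, so that $|X_n - X|\,1_{A_k} = |1_{A_k}X_n - 1_{A_k}X|\,1_{A_k}$. Consequently
\[
 \pr(|X_n-X| > \epsilon,\, A_k)
 \weq \pr(|1_{A_k}X_n - 1_{A_k}X| > \epsilon,\, A_k)
 \wle \pr(|1_{A_k}X_n - 1_{A_k}X| > \epsilon),
\]
and the right-hand side tends to $0$ as $n \to \infty$ by the assumed convergence in probability. Combining with the bound on $\pr(B_m^c)$ and the union bound over $k = 1, \dots, m$,
\[
 \pr(|X_n - X| > \epsilon)
 \wle \pr(B_m^c) + \sum_{k=1}^m \pr(|1_{A_k}X_n - 1_{A_k}X| > \epsilon)
 \wles \frac{\eta}{2} + \sum_{k=1}^m \pr(|1_{A_k}X_n - 1_{A_k}X| > \epsilon).
\]
For all $n$ large enough the finite sum is below $\eta/2$, so $\pr(|X_n - X| > \epsilon) < \eta$ eventually. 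Since $\epsilon$ and $\eta$ were arbitrary, $X_n \prto X$.

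\textbf{Main obstacle.} There is essentially no obstacle here; the only point requiring a moment's care is the passage from $1_{A_k}X_n \prto 1_{A_k}X$ to a statement about $X_n - X$ restricted to $A_k$, which is handled by the pointwise identity on $A_k$ noted above, and the use of $\pr(\bigcup_k A_k) = 1$ to truncate the (possibly infinite) family to finitely many indices.
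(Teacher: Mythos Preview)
Your argument is correct. The key identity $|X_n-X|\,1_{A_k}=|1_{A_k}X_n-1_{A_k}X|\,1_{A_k}$ together with the finite truncation $B_m=\bigcup_{k\le m}A_k$ and a union bound gives exactly the bound you wrote, and the conclusion follows.

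Your route, however, differs from the paper's. The paper does not argue directly with $\epsilon$--$\eta$ estimates; instead it invokes the subsequence characterisation of convergence in probability (every subsequence admits a further subsequence along which convergence holds almost surely) and then runs a diagonal extraction: starting from an arbitrary subsequence $\N_0$, it picks nested subsequences $\N_1\supset\N_2\supset\cdots$ so that $1_{A_k}X_n\to1_{A_k}X$ a.s.\ along $\N_k$, and the diagonal sequence works simultaneously for all $k$, hence a.s.\ on $\bigcup_kA_k$. Your approach is more elementary in that it avoids both the subsequence criterion and the diagonal argument, and it gives the result in a single pass with only countable additivity and the union bound. The paper's approach, on the other hand, packages the ``finitely many $A_k$ suffice'' step into the diagonalisation and yields almost sure convergence along the extracted subsequence as a by-product. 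Both are perfectly adequate for this auxiliary lemma.
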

\begin{proof}
Recall that $X_n \prto X$ if and only if every subsequence $\N_0 \subset \N$ has a further subsequence along which $X_n \to X$ almost surely \cite[Lemma 4.2]{Kallenberg_2002}. To prove the claim, let $\N_0$ be an arbitrary subsequence of $\N$. For $k=1$, we may choose a subsequence $\N_1 \subset \N_0$ such that $1_{A_1} X_n \to 1_{A_1} X$ almost surely as $n \to \infty$ along $\N_1$. For $k=2$, may choose a further subsequence $\N_2 \subset \N_1$ such that $1_{A_2} X_n \to 1_{A_2} X$ almost surely along $\N_2$. Proceeding this way, we construct a nested collection $\N_k \subset \N_{k-1} \subset \cdots \subset \N_0$ such that $1_{A_k} X_n  \to 1_{A_k} X$ almost surely as $n \to \infty$ along $\N_k$.

Denote the elements of $\N_k$ by $n_{k,1} < n_{k,2} < \cdots$, and define $n_\ell = n_{\ell,\ell}$. Then for all $k \ge 1$, $n_\ell \in \N_k$ for all $\ell \ge k$, so that $1_{A_k} X_{n_\ell} \to 1_{A_k} X$ almost surely as $\ell \to \infty$. Hence we conclude that $1_A X_{n_\ell}  \to 1_A X$ almost surely where $A = \cup_k A_k$. Because $\pr(A)=1$, we conclude that $\pr(X_{n_\ell} \to X) = \pr(1_A X_{n_\ell} \to 1_A X) = 1$. Hence $\N_0' = \{n_\ell: \ell \ge 1\}$ is a subsequence of $\N_0$ along which $X_n \to X$ almost surely. Hence $X_n \prto X$.
\end{proof}

\section{Auxiliary facts}

The lemma helps shows that on the average, singularities do not matter too much, if $X$ has a bounded probability density function. The following lemma has appeared in \cite[Lemma 3.2]{Tikanmaki_2002}. For the reader's convenience, we include a short proof of the result below.
\begin{lemma}
\label{the:Tikanmaki}
(i) Let $X$ be a real-valued random variable with a bounded probability density function $p_X$. Then for any real number $y$ and any $\alpha \in (0,1)$,
\[
 \E |X-y|^{-\alpha}
 \wle 1 + \frac{2}{1-\alpha} \sup_x p_X(x).
\]
(ii) Let $(X_t)$ be a real-valued random process such that $X_t$ has a bounded density $p_{X_t}$ for every $t$. Then for any measurable function $t \mapsto y_t$,
\[
 \E \int_0^T |X_t - y_t|^{-\alpha} \, dt
 \wle T + \frac{2}{1-\alpha} \int_0^T \sup_x p_{X_t}(x) \, dt.
\]
\end{lemma}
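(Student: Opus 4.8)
The plan is to prove part~(i) by a simple truncation at distance one from $y$, and then to deduce part~(ii) by applying part~(i) pointwise in $t$ and integrating, with the interchange of expectation and time integral justified by Tonelli's theorem since the integrand is nonnegative.

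For part~(i), I would split according to whether $|X-y|$ exceeds $1$, writing
\[
 \E |X-y|^{-\alpha}
 \weq \E\big[|X-y|^{-\alpha} \ind{|X-y| > 1}\big] + \E\big[|X-y|^{-\alpha} \ind{|X-y| \le 1}\big].
\]
On the first event $|X-y|^{-\alpha} \le 1$, so that term contributes at most $1$. For the second term I would bound the density by its supremum and compute
\[
 \E\big[|X-y|^{-\alpha} \ind{|X-y| \le 1}\big]
 \weq \int_{\{|x-y| \le 1\}} |x-y|^{-\alpha} p_X(x) \, dx
 \wle \sup_x p_X(x) \int_{-1}^{1} |u|^{-\alpha} \, du
 \weq \frac{2}{1-\alpha} \sup_x p_X(x),
\]
where the last integral is finite precisely because $\alpha < 1$. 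Adding the two bounds yields the claimed inequality.

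For part~(ii), I would apply part~(i) with $y$ replaced by $y_t$ to get $\E |X_t - y_t|^{-\alpha} \le 1 + \frac{2}{1-\alpha} \sup_x p_{X_t}(x)$ for every $t \in [0,T]$. Since $(t,\omega) \mapsto |X_t(\omega) - y_t|^{-\alpha}$ is nonnegative and jointly measurable, Tonelli's theorem permits swapping expectation and integral, giving
\[
 \E \int_0^T |X_t - y_t|^{-\alpha} \, dt
 \weq \int_0^T \E |X_t - y_t|^{-\alpha} \, dt
 \wle \int_0^T \left( 1 + \frac{2}{1-\alpha} \sup_x p_{X_t}(x) \right) dt
 \weq T + \frac{2}{1-\alpha} \int_0^T \sup_x p_{X_t}(x) \, dt.
\]

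I do not anticipate any genuine obstacle here; the only points deserving a word of care are the convergence of $\int_{-1}^{1} |u|^{-\alpha} \, du = \frac{2}{1-\alpha}$, which holds exactly because $\alpha \in (0,1)$, and the joint measurability of $(t,\omega) \mapsto |X_t(\omega) - y_t|^{-\alpha}$ needed for Tonelli in part~(ii), which is part of the standing measurability assumptions on the process.
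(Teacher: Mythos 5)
Your proof is correct and follows essentially the same route as the paper: the same truncation of $|X-y|$ at $1$, the same density bound for the near part, and the same pointwise application of (i) plus Tonelli for (ii). The only difference is that you spell out the Tonelli step, which the paper leaves implicit.
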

\begin{proof}
(i) Denote $c = \sup_x p_X(x)$ and note that
$
 \E |X-y|^{-\alpha} 1_{\{|X-y| > 1\}} \le 1
$
and
\[
 \E |X-y|^{-\alpha} 1_{\{|X-y| \le 1\}}
 = \int_{-1}^{1} |x|^{-\alpha} p_X(x+y) \, dx
 \le c \int_{-1}^{1} |x|^{-\alpha} \, dx
 = \frac{2c}{1-\alpha}.
\]

(ii) This follows at once from (i). Naturally, we assume in part (ii) that the density of $X_t$ is measurable as a function of $t$.
\end{proof}

\section*{Acknowledgements}

\changedii{We thank Ivan Nourdin and Nuutti Hyvönen for helpful discussions, and the anonymous reviewers for comments and remarks that have helped to improve the presentation of the article.}

\bibliographystyle{abbrv}
\bibliography{bibli_p4}

\end{document}